\newcommand{\partentry}[1]{\addtocontents{toc}
{\small\bfseries#1\hfill\thepage\par}}
\def\@part[#1]#2{%
    \ifnum \c@secnumdepth >\m@ne
      \refstepcounter{part}
      \partentry{\protect\makebox[2em][l]{\thepart}#1}
\else
      \partentry{#1}
    \fi
    {\noindent\normalfont\Large\bfseries\thepart\hspace{1em}#2\par}
    \nobreak
    \vskip 3ex
    \@afterheading}
\def\@spart#1{%
    {\noindent\normalfont\Large\bfseries #1\par} 
     \nobreak
     \vskip 3ex
     \@afterheading}
\renewcommand\section{\@startsection{section}{1}{\z@}
{-3.5ex \@plus -1ex \@minus -.2ex}
{2ex \@plus.2ex}
{\large\bfseries}}
\renewcommand\subsection{
\@ifstar{\setcounter{subsection}{\value{equation}}
\@startsection{subsection}{2}{\z@}
                          {1.75ex \@plus.5ex \@minus.2ex}%
                           {-.4em} 
			{\itshape}*}
{\setcounter{subsection}{\value{equation}}
\stepcounter{equation}
\@startsection{subsection}{2}{\z@}
                          {1.75ex \@plus.5ex \@minus.2ex}%
                           {-.4em} 
{\itshape}}}
\def\@seccntformat#1{\@ifundefined{#1@cntformat}%
{\csname the#1\endcsname\quad} 
{\csname #1@cntformat\endcsname}}
\def\section@cntformat{\thesection.~}
\def\subsection@cntformat{(\thesubsection)\ }
\renewcommand*\l@section{\mdseries\small\@dottedtocline{1}{1.5em}{2em}}
\numberwithin{equation}{section}
\theoremstyle{plain}
\newtheorem*{maintheorem}{Theorem}
\newtheorem{theorem}[equation]{Theorem}
\newtheorem{corollary}[equation]{Corollary}
\newtheorem{lemma}[equation]{Lemma}
\newtheorem{proposition}[equation]{Proposition}
\theoremstyle{definition}
\newtheorem{definition}[equation]{Definition}
\theoremstyle{remark}
\newtheorem{remark}[equation]{Remark}
\newtheorem{example}[equation]{Example}
\newcommand{\ol}{\overline}
\newcommand{\ad}{\mathrm{ad}}
\newcommand{\cA}{\mathcal{A}}
\newcommand{\cC}{\mathcal{C}}
\newcommand{\cF}{\mathcal{F}}
\newcommand{\clL}{\mathcal{L}}
\newcommand{\frS}{\mathfrak{S}}
\newcommand{\BF}{\mathbf{F}}
\newcommand{\BH}{\mathbf{H}}
\newcommand{\BSp}{\mathbf{Sp}}
\newcommand{\bC}{\mathbb{C}}
\newcommand{\bP}{\mathbb{P}}
\newcommand{\bQ}{\mathbb{Q}}
\newcommand{\bT}{\mathbb{T}}
\newcommand{\bZ}{\mathbb{Z}}
\newcommand{\Hom}{\mathrm{Hom}}
\newcommand{\GL}{\mathrm{GL}}
\newcommand{\Id}{\mathrm{Id}}
\newcommand{\End}{\mathrm{End}}
\newcommand{\bbrak}[1]{[\![#1]\!]}
\newcommand{\ppar}[1]{(\!(#1)\!)}
\newcommand{\nodprop}{\supset\hspace{-2pt}\subset}
\newcommand{\eul}{\mathrm{eul}}
\newcommand{\provomega}{z}
\begin{document}                                                                                                   
\title{\textbf{The Structure of 2D Semi-simple Field Theories}
}
\date{August 9, 2011}
\author{Constantin Teleman} 
\maketitle
\begin{quote}
\abstract{
\noindent I classify the cohomological 2D field theories based on a 
semi-simple complex Frobenius algebra $A$. They are controlled by 
a linear combination of $\kappa$-classes and by an extension datum to 
the Deligne-Mumford boundary. Their effect on the Gromov-Witten 
potential is described by Givental's Fock space formulae. This leads  
to the \emph{reconstruction of Gromov-Witten (ancestor) invariants} 
from the quantum cup-product at a \emph{single} semi-simple point and 
the first Chern class of the manifold, confirming Givental's higher-genus 
reconstruction conjecture. This in turn implies the Virasoro conjecture 
for manifolds with semi-simple quantum cohomology. The classification uses 
the Mumford conjecture, proved by Madsen and Weiss \cite{madw}.}
\end{quote}

\section*{Introduction}
This paper studies structural properties of \emph{topological 
field theories} (TFT's), a notion introduced by Atiyah and Witten 
\cite{wit} and inspired by Segal's axiomatisation of Conformal Field 
Theory. A TFT extracts the topological information which is implicit 
in quantum field theories defined over space-time manifolds more general 
than Euclidean space. The first non-trivial example is in $2$ dimensions, a 
setting which has been the focus of much interest in relation to 
\emph{Gromov-Witten theory}: the latter captures the expected count 
of pseudo-holomorphic curves in a compact symplectic target manifold. 
The result proved here, the classification of semi-simple theories, 
shows that an important property of these invariants is a formal 
consequence of the underlying structure, rather than a reflection 
of geometric properties of the target manifold. Loosely stated, the 
property in question is that a count of rational curves  with three 
marked points, encoded in the \emph{quantum cohomology} of the target, 
determines the answer to enumerative questions about curves of all genera, 
when the quantum cohomology ring is semi-simple.\footnote{To be precise, this 
is true of the so-called \emph{ancestor} Gromov-Witten invariants. The 
complete, \emph{descendent} invariants require additional genus zero 
information, the \emph{$J$-function}.}  

My classification leaves some important questions open (see \cite{ecm} for 
more discussion). One of them is to extract the Gromov-Witten classification 
data from the geometry of the symplectic manifold. Finding even a single semi-simple 
quantum cup-product (when one exists) may require infinite information, if curves 
are counted degree-by-degree.  
Another, more precise question concerns the \emph{degeneration} of a semi-simple theory 
to the locus in its Frobenius manifold (the natural parameter space, see \S7) 
where the algebra acquires nilpotents. An example is the discriminant locus within 
the deformation space of an isolated singularity: it is unclear whether 
the higher-genus part of the associated TFT, the \emph{Landau-Ginzburg $B$-model} 
for a singularity, extends continuously there (the semi-simple classification data blow up).

\subsection{First definition.} A \textit{$2$-dimensional topological field theory} 
over a ring $k$ is a strong symmetric monoidal functor $Z$ from the $2$-dimensional 
oriented bordism category to the tensor category of finitely 
generated projective $k$-modules. This means that $Z$ assigns to every 
closed oriented $1$-manifold $X$ a $k$-module $Z(X)$, and to any 
compact oriented surface $\Sigma$, with independently oriented boundary 
$\partial\Sigma$, a linear ``propagator" 
\[
Z(\Sigma): Z(\partial_-\Sigma) \to Z(\partial_+\Sigma).
\]
The sign $\pm$ of a boundary component compares the orientation induced from 
$\Sigma$ with the independent one on $\partial\Sigma$; we call $\partial_-\Sigma$ 
the \textit{incoming} boundary and $\partial_+\Sigma$ the \textit{outgoing} one. 
The above definition requires that 
\begin{enumerate}\itemsep0ex
\item $Z$ is multiplicative under disjoint unions, $Z(X_1\amalg X_2) 
= Z(X_1)\otimes Z(X_2)$.
\item Sewing boundary components leads to the composition of maps. 
\end{enumerate}
\noindent  
Part (i) is the monoidal condition, while (ii) is the functorial property. 
Note that the cylinder ``$=$'' with one incoming and one outgoing end represents 
the identity. In the simplest definition of the bordism category, morphisms 
are surfaces modulo oriented homeomorphism (rel boundary); more sophisticated 
definitions remember the topology of the diffeomorphism group (Remark~\ref{sophtft}).  

\subsection{First classification.} A folk theorem (with non-trivial proof, 
see \cite{ab}) ensures that $Z$ is equivalent to the datum of a \emph{commutative 
Frobenius $k$-algebra} structure on the space $A:=Z(S^1)$. This last notion 
comprises a commutative $k$-algebra structure on $A$, together with an $A$-module 
isomorphism $\iota: A \xrightarrow{\sim} A^*:= \Hom_k(A,k)$. The Frobenius structure 
on $Z(S^1)$ can be read from the functor $Z$ as follows:
\begin{itemize}\itemsep0ex
\item the multiplication map $A\otimes{A} \to{A}$ is 
defined by the trinion with two incoming circles and an outgoing one;
\item the unit in $A$ is defined by the disk with outgoing boundary, 
$Z(\supset): k\to A$; 
\item the disk $\subset$ with incoming boundary determines the vector $\theta 
:= \iota(1)\in A^*$.  
\end{itemize}
(My pictures represent the projection outlines of surfaces, with their boundaries 
omitted. Also, the reader will have noticed that surfaces are `read' from 
right to left, matching the ordering convention for the composition of operators.) 
The form $\theta$, in turn, determines a symmetric pairing $\beta: A\times A \to k$, 
$\beta(a\times b)= \theta(ab)$, which is the partial adjoint to $\iota$ in one 
of the variables. Non-degeneracy of $\beta$ --- equivalently, the isomorphy condition 
on $\iota$ --- is also known as \emph{Zorro's lemma},\footnote{I believe the 
name was coined by Jacob Lurie.} and is proved by the diagram wherein a 
``Z''-shaped identity cylinder is factored into a ``right elbow" $\Supset$ 
(that is, a cylinder with two outgoing ends), sewed on to a left elbow $\Subset$ 
at one of its outputs: $Z(\Subset)$ represents $\beta$, and $Z(\Supset)$ 
provides an inverse co-form.

\subsection{Semi-simple case.}\label{eulercharformula}
 An easy but important special case concerns \textit{semi-simple} 
algebras $A$ over $k=\bC$. As algebras, these are isomorphic to $\bigoplus_i 
\bC\cdot P_i$ for projectors $P_i$, uniquely determined up to reordering. 
From the definition and non-degeneracy of $\beta$, the projectors are 
$\beta$-orthogonal and their $\theta$-values $\theta_i = \theta(P_i)$ must be 
non-zero complex numbers. Up to isomorphism, $A$ is classified by the (unordered) 
collection of the $\theta_i$. The TFT is easy to describe in the \emph{normalised 
canonical basis} of rescaled projectors $p_i:= \theta_i^{-1/2}P_i$, as follows. 
For a connected surface $\Sigma$ with $m$ incoming and $n$ outgoing boundaries, 
the matrix of the propagator $Z(\Sigma)$ has entry $\theta_i^{\chi(\Sigma)/2}$ 
linking $p_i^{\otimes{m}}$ to $p_i^{\otimes{n}}$, while all entries involving 
mixed tensor monomials in the $p_i$ are null. I leave it to the reader's care 
to supply the correct reading of this rule when $m$ or $n$ are zero. 

\subsection{Example: the Euler class.}\label{ss=invert}
A Frobenius algebra contains a distinguished vector, the \emph{Euler class}~$\alpha$, 
which is the output of a torus with one outgoing boundary. When $A$ is the 
cohomology ring of a closed oriented manifold with coefficients in a field and 
$\beta$ the Poincar\'e duality pairing, $\alpha$ is the usual Euler class. 
(Of course, $A$ will be a \emph{skew}-commutative, if there is any odd cohomology.) 
By contrast, in the semi-simple case, $\alpha$ is the \emph{invertible} element 
$\sum_i\theta_i^{-1}P_i$. The endomorphism of $A$ defined by a two-holed surface 
of genus $g$ is the multiplication by $\alpha^g$: in matrix form, $\mathrm{diag} 
[\theta_i^{-g}]$. In the semi-simple case, this observation allows the recovery 
of low-genus $Z$ from high genus, and will play a key role in the paper. 

There is actually a converse: invertibility of $\alpha$ implies 
semi-simplicity of $A$. (The trace on $A$ of the operator of multiplication 
by $x$ is $\theta(\alpha{x})$, so $\mathrm{Tr}_A$ defines a non-degenerate 
bilinear form on $A$; it follows that, over any residue field of the ground 
ring $k$, $A$ is a sum of separable field extensions.) This, and the importance 
of an invertible $\alpha$, were perhaps first flagged by Abrams, also in 
connection with quantum cohomology; the reader is referred to the nice paper 
\cite{ab2}. 
\vskip1.5ex

\subsection{What this paper does.} Here, I give an algebraic classification 
for \textit{family TFTs} (FTFTs), in which the surfaces vary in families and 
the functor $Z$ takes values in the cohomology of the parameter spaces, with 
coefficients in the space of maps between tensor powers of $A$. These theories 
are variants of the \emph{Cohomological Field Theories} (CohFT's) introduced by 
Kontsevich and Manin \cite{km}. ``Families" consisting of single surfaces recover 
the previous TFT notion, detecting the underlying Frobenius algebra $A$. My 
classification applies whenever $A$ is semi-simple and $k$ is a field of 
characteristic zero; I use $\bC$ for simplicity. 

\subsection{The Gromov-Witen case.} The theories of greatest interest involve 
nodal surfaces, the \emph{stable curves} of algebraic geometry, and come from 
\emph{Gromov-Witten invariants}. In this setting, I provide a structure formula
for the Gromov-Witten invariants of manifolds whose quantum cohomology is 
generically semi-simple. Such theories have additional structure,
a grading which stems from the fact that spaces of stable maps have 
topologically determined (expected) dimensions. This structure limits 
the freedom of choice considerably: the full FTFT is determined by the 
Frobenius algebra and the grading information. This affirms a conjecture 
of Givental's \cite{giv} on the reconstruction of higher-genus invariants, 
and in particular, as pointed out in \cite{giv3}, the Virasoro conjecture 
for such manifolds. Verification of this conjecture involves tracing 
through Givental's construction, with an improvement to the formulation 
which (I think) is originally due to M.~Kontsevich, and which we review in 
\S\ref{quadhamilt}. 

\subsection{Relation to ``open-closed'' theories.} With \emph{different} 
starting hypotheses, a vast extension of my classification has been reached 
by Kontsevich and collaborators in the framework of \emph{open-closed} FTFTs 
(see \cite{kkp} and sequels in preparation). From that perspective, I show 
that any semi-simple (closed string) CohFT might as well be assumed to come 
from an open-closed FTFT with a semi-simple category of boundary states. In
Gromov-Witten theory, this statement could even follow from a 
sufficiently optimistic formulation of Homological Mirror Symmetry: 
semi-simplicity of quantum cohomology suggests a Landau-Ginzburg 
B-model mirror with isolated Morse critical points of the potential, 
since (in the case of isolated singularities) the quantum cohomology 
ring is meant to be isomorphic to the Jacobian ring of the potential.
In this situation, the mirror category of boundary states (B-branes) is 
also semi-simple. Assuming all this, we could then invoke Kontsevich's 
classification. 

However, while it seems clear that the open-closed framework (or some
related $2$-categorical approach) is the right setting, Gromov-Witten 
theory is not quite ready for it, as the requisite assumptions on the 
Fukaya category of boundary states have only been checked in special 
cases; whereas the CohFT axioms are well-established. Examples of varieties 
with generically semi-simple quantum cohomology include: toric manifolds, 
most Fano three-folds with no odd Betti numbers \cite{ciolli}, as well 
as blow-ups of such varieties at any number of points \cite{bayer}. 
Of these, only for toric ones does the open-closed theory seem to be 
in convincing shape, thanks to work by Fukaya and collaborators 
\cite{fooo}. 


\section*{Acknowledgements}
I have lectured on successive approximations of this work since the 
fall of 2004. Comments from colleagues were essential in correcting 
and re-shaping the results, and it is a pleasure to thank them here. 
Early on, K.~Costello suggested a link between my classification and 
Givental's conjecture; T.~Coates, Y.~Eliashberg, S.~Galatius, E.~Getzler, 
M.~Kontsevich, Y.~Manin, J.~Morava and D.~Sullivan all contributed 
useful comments. Y.-P.~Lee offered numerous helpful comments on the 
early form of the manuscript. Special thanks are due to A.~Givental 
for patient explanation of his work, as well as to the anonymous referees 
for careful reading and numerous suggestions for improvement.

Substantial portions of this work were completed during the 2006 programme 
on ``New Topological structures in physics" at MSRI and much of the initial 
writing was done during a visit to the Max Planck Institute in Bonn in August 
2007. Partial support was 
provided by NSF grant DMS-0709448.

\section*{}
\begin{minipage}[t]{10cm}
\tableofcontents
\end{minipage}

\section{Summary of definitions and results}
\label{summary}
This section outlines the definition and classification of the various 
versions of FTFT's used throughout the paper, as well as the background 
of the two key results, Theorems\ 1 and 2, formulated towards the end 
of the section. It is not possible to cover \emph{all} the details in the 
space suited to an opening section, and the reader will often be referred 
to later paragraphs for clarification. For instance, classifying spaces of 
surface bundles are discussed in \S2; a refresher on $\kappa$- and $\psi$-classes 
is found in \S\ref{tautrefresh}; and the list of axioms for a DMT is only truly 
completed by spelling out the `nodal relations' in \S\ref
{nodalrelations}.  
 
\subsection{Functorial definition.} \label{functdef}
Family TFT's admit a categorical definition in the style of the introduction. 
I give it here for logical completeness; its meaning and use, in the several 
variants outlined in \S\ref{classif} below, will be spelt out more clearly 
in Section 2. 

Consider the following two contra-functors $\cC$ and $\cF$, defined 
over the category of topological spaces and continuous maps, and taking 
values in symmetric monoidal categories. On a topological space $X$, 
the first category $\cC(X)$ has as objects bundles of closed oriented 
$1$-manifolds over $X$, and as morphisms bundles of compact oriented 
$2$-bordisms, modulo boundary-fixing oriented homeomorphisms over $X$. 
Objects of the second category $\cF(X)$ are flat complex vector bundles 
over $X$, while the morphisms are the graded vector spaces 
\[
\mathrm{Hom}_{\cF(X)}(V,W) := H^\bullet\left(X; \mathrm{Hom}_X(V,W)\right).
\]
A FTFT is a symmetric monoidal transformation $Z$ from $\cC$ to $\cF$. 
Variants of this notion are obtained by  changing the defining features  
of $\cC$: we can require all circles in $\cC(X)$ to be parametrised (\S\ref
{classif}.i) or not (\S\ref{classif}.ii), allow Lefschetz fibrations 
as morphisms (\S\ref{classif}.iii), and finally, impose the 
Deligne-Mumford stability condition on the surface fibrations 
(\S\ref{classif}.iv).

\begin{remark}\label{sophtft}
The objects of $\cC$ and $\cF$ form \emph
{sheaves} over the site of topological spaces, but the morphisms do not. 
Morphisms of $\cF$ are the cohomologies of a differential-graded version 
of $\cF$, in which the objects are complexes of local coefficient systems 
over $X$ and the morphisms are co-cycles, instead of cohomology classes. 
There is a similar enhancement of $\cC$ to a sheaf of categories enriched 
over topological spaces: morphisms are classifying spaces of the 
homeomorphism groupoids of surface bundles. A (symmetric monoidal) natural 
transformation between these sheaves of categories is a possible definition 
of \emph{chain-level FTFT's}, and is closely related to Segal's definition 
of \emph{topological conformal field theory} \cite{seg, get}. We will 
not use this more refined notion in the paper. 
\end{remark}

\subsection{FTFT variants and their classification.}\label{classif}
We will consider several versions of family field theories; their 
classification increases in complexity. All four variants below are 
relevant to the eventual focus of interest, semi-simple cohomological 
field theories. 

\begin{enumerate}
\item In the simplest variant, the surfaces have parametrised boundaries. 
These theories are classified by a single, group-like class $\tilde{Z}^+$ 
in the $A$-valued cohomology of the stable mapping class group of surfaces 
(\S\ref{primclass}). As a result of the Mumford conjecture \cite{madw}, such 
a class is necessarily of the form $\exp\{\sum_{j> 0} 
a_j\kappa_j\}$, with arbitrary elements $a_j\in A$ coupled to the 
Morita-Mumford classes $\kappa_j$. The class $\tilde{Z}$ associated to 
a surface bundle acts diagonally on tensor monomials of the normalised 
canonical basis, as follows. if $a_j = \sum a_{ij}P_i$, $a_{ij}\in \bC$, then 
the entry $\theta_i^{\chi/2}$ in the propagator matrix \eqref{eulercharformula} 
which the Frobenius algebra assigns  to a single surface is now multiplied by 
the factor $\exp\{\sum_{j>0}a_{ij}\kappa_j\}$. Note that, as $\chi = -\kappa_0$, 
we could also account for the $\theta_i$ by including in our sum a term $j=0$, 
with $a_0 = \frac{1}{2}\log \alpha$. 

\item The second FTFT variant allows the boundaries to rotate freely. This 
introduces a new classification datum, a $\bC$-linear map $E: A\to A\bbrak
{\provomega}$ with $E \equiv \Id \pmod{\provomega}$. A free boundary theory is 
determined by $E$ and the earlier $\tilde{Z}^+$ as follows: twist the incoming 
states by $E^{-1}$ and the outputs by $E$, with $z$ specialised to the 
sign-changed Euler classes of the respective boundary circle bundles. 
(The awkward sign is reluctantly adopted here to avoid worse later; it stems 
from the sign mismatch between Euler and $\psi$ classes for inbound circles.) 
In-between, the fixed-boundary propagator of (i) applies. 

\begin{remark} The meaning of $E$ is obscured by our simplified, cohomological 
setting; it can be reverse-engineered from the context of open-closed and
chain-level FTFTs. In the functorial setting~\eqref{functdef}, the local system 
$Z(S^1)$ with fibre $A$ over $\bC\bP^\infty$ defined by the universal circle bundle 
is necessarily trivial, because $\bC\bP^\infty$ is simply connected. Our $E$ supplies 
a second, 'interesting' trivialization of the same, $(-\provomega)\in H^2(\bC\bP^\infty)$ 
is the universal Euler class. In the chain-level version of the theory, $A$ is the homology 
of a space $X$ (or a chain complex) with circle action, and the inputs and outputs 
at free boundaries belong naturally to the circle-equivariant cohomology, and 
$E$ is here to split the latter as $A\otimes\bC\bbrak
{\provomega}$. When the circle action on $X$ is trivialized for independent 
reasons, as happens with the Hochschild complex of a \emph{semi-simple} category of 
boundary states, $E$ expresses the difference between the `obvious' splitting 
and the one relevant the field theory. (See \cite{kkp} for more discussion.)  
\end{remark}

\item  Next in line are the \textit{Lefschetz theories}, where surfaces are 
allowed to degenerate nodally into the Lefschetz fibrations of algebraic geometry. 
A nodal surface can be deformed uniquely to a smooth one; the cohomological 
nature keeps $Z$ unchanged under this deformation, so adding \textit{single} 
nodal surfaces to the theory involves no new information. Things are different 
in a \textit{family}: up to homotopy, the automorphism group of the 
``nodal propagator" $\nodprop$, an incoming-outgoing pair of crossing disks, 
is the product $\bT\times \bT$ of the two independent circle rotation groups. 
This provides a new datum $Z(\nodprop)$, an $\mathrm{End}(A)$-valued formal 
series $D(-\omega_+,\omega_-)$ in the Euler classes $\omega_\pm$ of the two 
universal disk bundles. 

Keeping only the diagonal rotation, we can deform the node $\nodprop$ into a 
rotating cylinder. Since $Z(=)=\Id$ for a fixed cylinder, and it must remain 
a projector when the cylinder rotates, we conclude that $D=\Id \mod{(\omega_+ 
- \omega_-)}$. In addition, we will find a symmetry constraint relating $D$ 
and $E$; see \S\ref{dmsec1} for the precise relation. These are all the data 
and constraints: an involved, but explicit formula for the Lefschetz theory 
classes is given in \S\ref{lefschetzconst} from $\tilde{Z}, E$ and $D$, as a 
kind of ``time-ordered exponential integral'' along the surfaces in any family.

\item Lastly, we are interested in \textit{Deligne-Mumford theories} 
(DMT's): these are Lefschetz theories involving only \emph{stable} nodal
surfaces, the Deligne-Mumford stable curves of algebraic geometry. Excluding 
cylinders and disks (which are unstable) brings about the need for two additional 
axioms, the \emph{nodal factorisation rules} and \emph{vacuum axiom}, which 
in a Lefschetz theory follow from the other axioms (See \S\S\ref{nodexplained}--\ref
{vacax}). 

The best-known DMT's are the Cohomological Field theories \`a la Kontsevich 
and Manin, which satisfy $D =\Id$. It is more customary to state their 
structure in terms of surfaces with inputs only, but that is a matter 
of convenience. In CohFT's, the compatibility constraint on $E$ will become 
\emph{Givental's symplectic constraint} $E(\provomega)\circ E^*(-\provomega) 
=\mathrm{Id}$. The main examples of CohFT's are the Gromov-Witten cohomology 
theories of compact symplectic manifolds, which carry even more structure 
and constraints: see \S\S\ref{gromwit}--\ref{gwconst} below. 
\end{enumerate}

\noindent Functors  of the types (i), (ii) and (iii, iv) shall be denoted by 
$\tilde{Z}, Z$ and $\bar{Z}$, respectively. In the semi-simple case, we will
find that (iii) and (iv) have the same classification.

\subsection{Idea of proof.} For the first two types of FTFTs, the classification 
is an easy consequence of the Mumford conjecture, proved by Madsen and Weiss 
\cite{madw}. (We will also use an older result of Looijenga's on $\psi$-classes, 
\cite{lo}.) In the limit of large genus surfaces, the sewing axiom becomes 
an equation in the complex cohomology of the stable mapping class group. 
The latter is a power series ring in the tautological classes (see 
\S\ref{tautrefresh}), and we solve the equation there. Semi-simplicity 
of $A$ lets us retrieve  the low-genus answer from high genus thanks to 
invertibility of the Euler class $\alpha$. 

DMT's require an additional argument. The universal families of stable 
nodal surfaces are classified by orbifolds with a normal-crossing 
stratification. The argument above determines the classes $Z$ on each 
stratum, but there could be ambiguities and obstructions in patching 
these classes together. However, the Euler classes of certain boundary 
strata involving large-genus surfaces are \textit{not} zero-divisors in 
low-degree cohomology. This ensures the unique gluing of cohomology classes 
over suitably chosen strata. We find enough strata to cover all Deligne-Mumford 
moduli orbifolds, and prove the unique patching of the $Z$-classes to 
a global class $\ol{Z}$. This observation is the key contribution of 
the paper; the remainder falls in the ``known to experts" category. 
 
A more natural resolution of the gluing ambiguity involves the use of 
\emph{chains}, instead of homology classes. This point of view, pioneered 
by Kontsevich in the context of homological mirror symmetry, fits 
naturally with the notion of \emph{open-closed field theories} and their 
$A_\infty$-categories, and was successfully developed by Costello, 
leading in that setting to a beautiful classification result \cite{cost}. 
It also ties in nicely with the \emph{string topology} example of Chas and 
Sullivan \cite{sul}. From this angle, my result shows that the semi-simple 
case is considerably easier: open strings and chain-level structures are 
not needed. 

\subsection{Example: Gromov-Witten theory.}\label{gromwit}
Here, the Frobenius algebra $A$ is the quantum cohomology of a compact 
symplectic manifold $X$, at some chosen point $u\in H^{ev}(X)$. To apply 
my classification, we must choose a point $u$ where this ring is semi-simple  
(assuming such a point exists, which is a strong restriction on the manifold 
$X$). This $u$ may be the generic point --- which indeed may be the only 
option, if the series defining the quantum cup-product turns out to diverge. 
Semi-simplicity confines $H^\bullet(X)$ to even degrees, because odd classes 
are necessarily nilpotent. (More is true: it turns out that semi-simplicity 
of the \emph{even part} $H^{ev}$ of the quantum cohomology ring forces the 
vanishing of odd cohomology  \cite{hmt}.)

The Gromov-Witten theory of $X$ is constructed as follows. Denote by 
$X_{g,\delta}^n$ the space of Kontsevich stable maps to $X$ with genus 
$g$, degree $\delta\in H_2(X)$, and $n$ marked points. We obtain maps 
\[
GW{}_{g,\delta}^n: H^\bullet(X)^{\otimes n} \to H^\bullet
\big(\ol{M}{}_g^n\big)
\] 
to the cohomology of Deligne-Mumford spaces $\ol{M}{}_g^n$ by pulling 
back  classes on $X$ via the evaluation map $X_{g,\delta}^n \to X^n$, 
and then integrating along the forgetful map $X_{g,\delta}^n \to \ol{M}{}_g^n$. 
This last step uses the virtual fundamental class of $X_{g,\delta}^n$. The 
degree of each  map $GW_\delta$, in the natural grading on $H^\bullet(X)$, 
is determined by the  relative (virtual) dimension of moduli spaces: 
 \begin{equation}\label{dimf}
  \deg GW_{g,\delta} = 2(\dim_{\bC} M_g^n - \dim_{\bC} X_{g,\delta}^n) = 
  2(g-1)\dim_\bC X - 2\langle c_1(X) | \delta \rangle. 
 \end{equation}
Summing over homology degrees $\delta$ yields a class 
\[
GW{}_g^n := \sum_\delta GW_{g,\delta}^n \cdot \mathrm{e}^\delta, 
\]
with coefficients in (a completion of) the group ring $\bQ[H_2]$, 
called the \emph{Novikov ring} of $X$. For a fixed $u\in H^2(X;\bC)$, sending 
$\mathrm{e}^\delta \mapsto \exp{\langle u | \delta\rangle}$  
furnishes a ring homomorphism $\bQ[H_2]\to \bC$, and subject to convergence 
we get a $u$-dependent family of complex cohomology classes $GW_{u,g}^n$. 
We recall in Section~2 below why this is equivalent to a family of DMT's 
$GW_u$, in the sense of \ref{classif}.iv. It is no accident that we 
obtain an entire family of DMT's: in fact, a general deformation construction 
(Definition~\ref{zedu} below) produces a family parametrised by all $u$ in 
(an open, or possibly formal subset of) $H^{ev}(X)$. Example~\ref{gromwitextend} 
spells this out in the case of Gromov-Witten cohomology.

\subsection{Gromov-Witten cohomology constraints.}\label{gwconst}
The theories $GW$ just described meet three additional constraints.
They are specifically traced to the use of \emph{ordinary} cohomology 
(for instance, they do not apply in this form to the exotic Gromov-Witten 
theories of Coates and Givental \cite{tomsasha}).
\begin{enumerate}
\item The \emph{Cohomological Field Theory} (CohFT) condition $D=\Id$;
\item The \emph{flat vacuum} condition: inserting the identity 
$\mathbf{1}\in A$ as the first input in $GW_u^n$ leads to the same class as
the pull-back of $GW_u^{n-1}$ along the first forgetful morphism 
$\ol{M}{}_g^n\to\ol{M}{}_g^{n-1}$;
\item \emph{Homogeneity} of the family $GW_u$ under the \emph{Euler vector 
field} $\xi$ on $H^{ev}(X)$. Along $H^2(X)$, $\xi$ is the constant 
vector field $c_1(X)$, but more generally
\[
\xi_u := c_1(X) + \sum (i-1) u_{2i}\quad\text{at}\quad u=\sum u_{2i}
\in\bigoplus\nolimits_i H^{2i}(X).
\]  
\end{enumerate}

\noindent In GW theory, condition (i) reflects the factorisation of the 
(virtual) fundamental class of $X_{g,\delta}^n$ at the boundary of 
Deligne-Mumford space \cite{giv2}. Condition (ii) is the base 
change formula in the square of forgetful morphisms\footnote{This is not 
altogether trivial, because the square is not quite Cartesian, due to the 
contractions of the universal curve. Again, it is conditioned by our 
use of ordinary cohomology.} 
\[
\begin{array}{ccc}
 X_g^n & \to  & X_g^{n-1}  \\
 \downarrow &   &  \downarrow \\
 \ol{M}{}_g^n & \to  &   \ol{M}{}_g^{n-1}
\end{array}
\]
Readers may know that (ii) implies the \emph{flatness of the identity} in 
the associated Frobenius manifold \cite[III]{man}; we will revisit this 
in \S\ref{vacdiffeq}. Finally, the homogeneity condition (iii), to be 
reviewed in more detail in \S\ref{eulerfield} (see also \cite[\S{I}.3]{man}), 
encodes the degrees \eqref{dimf} of the maps $GW_{g,\delta}^n$: see Example 
\ref{gromwitextend}. 

These constraints can be axiomatised in the setting of abstract DMT's,  
and imposing them narrows down the classification of semi-simple theories. 
In CohFT's, the operator $E$ of \S\ref{classif}.ii satisfies $E(\provomega)
\circ E^*(-\provomega) =\mathrm{Id}$. The flat vacuum condition determines 
the $\tilde{Z}^+$ (of \S\ref{classif}.i) from $E$, as in Proposition~\ref
{flatvac} below. Confirming a prediction of Givental's 
\cite{giv}, we will see that semi-simple CohFT's are determined by their 
\emph{genus-zero part}, the restriction to families of genus zero curves, 
save for an ambiguity related to the \emph{Hodge bundle}. (See \S\ref
{ambiginhom} for the precise statement.) Homogeneous theories (iii) have 
no such ambiguity, and we can then give an explicit \emph{reconstruction 
procedure} from the Frobenius algebra $A$ alone and the homogeneity constraint, 
as we explain after reviewing the following example.

\subsection{Example: the Manin-Zograf conjecture.} 
A simple illustration of the classification concerns the cohomological field 
theories of rank one: $\dim A =1$, so $A$ is necessarily semi-simple. (These 
theories are the units for a natural 
tensor structure on the category of CohFT's.) In this case, my classification 
affirms an older conjecture of Manin and Zograf \cite{manzog}: $\ol{Z}$ is 
an exponentiated linear combination of $\kappa$- and $\mu$-classes (the latter 
being the Chern character components of the Hodge bundle). The coefficients of 
the $\mu$-classes are easily related to those of $\log{E}(z)$ (\S\ref{manzogsect}), 
and this example illustrates nicely the ambiguities in reconstruction, as follows. 
Genus zero CohFT's of rank one are described using $\kappa$-classes alone, 
\cite[\S{III}.6]{man}, because the Hodge bundle is trivial in genus zero, 
where the $\mu$-classes are therefore invisible. On the other hand, the flat 
vacuum CohFT's are precisely those involving $\mu$-classes only (Proposition~\ref
{manzogconj}). Imposing all three conditions in \S\ref{gwconst} leads to 
$\tilde{Z}^+ =1$ and $E=\Id$, leaving only one choice: the Frobenius algebra 
structure on $A$, determined by the single complex number $\theta(1)$.

\subsection{Reconstruction from genus zero.}\label{18}
We now outline the reconstruction result of semi-simple homogeneous CohFTs from 
their underlying Frobenius algebra; full details are given in \S7 and \S8. 

For any CohFT $\ol{Z}$, a formal construction (Definition~\ref{zedu}) 
produces a family $\ol{Z}{}_u$ of CohFT's, parametrised by $u\in U$, an 
open (or formal) nieghbourhood of $0\in A$. In Gromov-Witten theory, the 
$H^2$ part of this family was described in \S\ref{gromwit}. The Frobenius 
algebra structure on  $A$ varies in this family and leads to a so-called 
\emph{Frobenius manifold} structure on $U$; see \S\ref{frobman} below for 
a minimal discussion, or \cite{dub, man, leepan} for an extensive one. A 
reconstruction theorem \cite{man} determines the genus-zero part of the CohFT 
from this Frobenius manifold. This fact has no known  analogue for the 
higher-genus part of the theory, largely because the cohomologies $H^\bullet
\big(\ol{M}_g^n\big)$ are unknown. 

However, for semi-simple theories, Givental \cite{giv} conjectured a formula 
for the classifying datum $E$ from genus-zero information. (The conjecture 
was framed in the sightly different setting of \emph{potentials}, to be recalled 
in \S\ref{potential} below.) Specifically, he characterised $E$ by a system of 
linear ODE's on $U$ (Dubrovin's \emph{first structure connection}), and from 
there, the homogeneity constraint \S\ref{gwconst}.iii led to a unique solution. 
In the final sections of this paper, I verify the ODE's for $E$ in the abstract 
setting of CohFT's (along with a companion ODE for $\tilde{Z}$) and conclude 

\begin{maintheorem}[1]
A semi-simple Cohomological Field theory satisfying the homogeneity constraint 
\ref{gwconst}.iii is uniquely and explicitly reconstructible from genus zero 
data. For homogeneous theories with flat vacuum, the Euler vector field and 
the Frobenius algebra structure suffice for reconstruction.
\end{maintheorem}

\noindent 
Reconstruction takes the form of a recursion for the Taylor coefficients 
of $E(\provomega)= \sum_k E_kz^k$. Let us spell this out in Gromov-Witten cohomology, 
when $A= H^\bullet(X)$ with the quantum cup-product at some point $u\in H^{ev}(X)$, 
assumed to define a semi-simple multiplication. Denote by $\mu$ the shifted 
degree operator $(\deg -\dim_\bC(X))/2$ on $A$, and by $(\xi\cdot_u)$ that of quantum 
multiplication by the Euler vector $\xi_u$ at $u$. Then, the recursion 
\[
 \left[(\xi\cdot_u), E_{k+1}\right] = (\mu+k)\cdot{E}_k
\]
determines $E(z) $ uniquely form $E_0=\Id$. (See the proof of Theorem~\ref
{gwreconst}.) Thus, all Gromov-Witten classes $GW_{g,\delta}^n\in H^\bullet
(\ol{M}{}^n_g)$ are  constructible from $c_1(X)$ and the quantum multiplication 
operator $(\xi\cdot_u)$ at a single (semi-simple) point $u$.

\begin{remark}
The series $E(\provomega)$ has an interpretation already flagged 
by Dubrovin \cite{dub}. Namely, the formal expression $E(z)\cdot\exp
(-\xi\cdot_u/z)$ gives the asymptotics at $z=0$ of solutions of an ODE with 
irregular (quadratic) singularities there (see \S\ref{frobreconst}). In 
the case of quantum cohomology, genuine solutions have unipotent, but 
non-trivial monodromy around $0$. (The monodromy logarithm is the 
operator of classical multiplication  by $c_1(X)$, cf.~\cite{dub}, 
which does not vanish for manifolds with semi-simple quantum cohomology.) 
Because the asymptotic formula is single-valued, it cannot represent 
a genuine solution and so the series $E(z)$ cannot converge. This makes 
the prospect of expressing $E$ in terms of immediate geometric data 
of the symplectic manifold problematic; this last question is very much 
open. 
\end{remark}

\subsection{Potential of a DMT.} \label{potential}
Let $\ol{Z}_g^n: A^{\otimes n} \to H^\bullet(\ol{M}{}_g^n)$ be the 
class associated by the DMT to the universal stable curve over the 
Deligne-Mumford space $\ol{M}{}_g^n$. The \emph{primary invariants} 
are the integrals of the $\ol{Z}$'s on the $\ol{M}{}_g^n$'s. However, 
$\ol{M}{}_g^n$ also carries the Euler classes $\psi_1, \dots, \psi_n$ 
of the cotangent lines to the universal curve at the marked points, 
and more information about $\ol{Z}$ is recovered by including $\psi$'s 
before integration. The resulting numbers are encoded in a generating 
series, the \emph{(ancestor) potential}, a function of a series $x(\provomega)
= x_0 +x_1\provomega + \dots \in A\bbrak{\provomega}$: 
\begin{equation}\label{pot}
\cA(x) = \exp\left\{\sum_{g,n} \frac{\hbar^{g-1}}{n!}
	\int\nolimits_{\ol{M}{}_g^n} \ol{Z}{}_g^n\left(x(\psi_1), \dots,
	x(\psi_n)\right)\right\}; 
\end{equation}
the sum excludes the values $(g,n) = (0,0), (0,1), (0,2)$ and $(1,0)$ 
for which $\ol{M}$ is not an orbifold. The series in \eqref{pot} need not 
converge analytically, but converges at least formally as a power 
series in $\{x,\hbar, x^3/\hbar\}$; so its exponential is well-defined 
in \emph{some} space of $\bC\ppar{\hbar}$-valued functions of $x$.

\begin{example}\label{trivexample}
The \textit{trivial} $1$-dimensional theory has $A=\bC$ and $\ol{Z} 
=\mathrm{1}$ for all $g$ and $n$; the integrand is ${x}(\psi_1)\wedge 
\dots\wedge x(\psi_n)$ and $\cA$ is the \emph{$\tau$-function} of 
Kontsevich and Witten. 

More generally, any Frobenius algebra $A$ can be coupled to the trivial 
cohomological field theory, by letting each ${}^q\ol{Z}{}_g^p$ be the 
degree zero class specified by the surface of genus $g$ with $p$ 
inputs and $q$ outputs. The potential is then expressible in terms of 
Kontsevich integrals. 
\end{example}

The potentials $\cA_u$ corresponding to the family  $GW_u$ of Gromov-Witten 
cohomology theories of a compact symplectic manifold are parametrised by 
$u\in A= H^{ev}(X)$ --- or in a formal version thereof, since the convergence 
question seems open in general. They are known as the \emph{ancestor GW 
potentials} of $X$. Their relation to the more customary \emph{descendent 
potential}, defined using the $\psi$-classes and integration over the 
spaces $X_{g, \delta}^n$, was determined\footnote{For clarification,  
recall that the descendent potential carries additional ``calibration'' 
information from the $1$-point, or \emph{J-} function, a choice of solution 
of the quantum ODE, which is not contained in our notion of a CohFT.} by 
Kontsevich and Manin \cite{km2}. The ancestor-descendent relation was reframed 
by Givental in the setting of loop group actions, which we now recall. 
 
\subsection{Givental's loop group conjecture.}\label{loopgroup}
For clarity, let us focus here on Cohomological Field Theories ($D=\Id$), 
postponing discussion of the general case until \S\ref{quadhamilt}. Let 
$\BF\ppar{\hbar}$ be the space of $\bC\ppar{\hbar}$-valued polynomials 
on $A\bbrak{\provomega}$; the potentials $\cA$ in \eqref{pot} live in some 
completion of this, such as the space of power series described in \S\ref
{potential}. (The choice of completion is not material, as our constructions 
and group actions will reduce to recursively defined operations on power 
series coefficients; see~\S6.) Define a symplectic form on the space $A\ppar
{\provomega}$ of formal Laurent series, 
\[
\Omega(x,y) = \oint \beta\left (x(-\provomega), y(\provomega)\right)d\provomega, 
\]
using the Frobenius bilinear form $\beta$. We view $\BF\ppar{\hbar}$ as a 
Fock representation of the Heisenberg group $\BH$ built on $\{A\ppar
{\provomega}, \hbar\Omega\}$. The symplectic group $\BSp$ on $A\ppar
{\provomega}$ acts projectively on suitably chosen completions of $\BF\ppar{\hbar}$ 
by the intertwining metaplectic representation. (The Lie algebra acts on 
$\BF\ppar{\hbar}$, and the portion of the action which we will need is 
integrable on a space containing the potentials; see~\S6.) The Laurent series 
loop group $\GL(A)\ppar{\provomega}$ acts point-wise on $A\ppar{\provomega}$. 
Consider the following subgroups of $\BSp$:
\begin{itemize}
\item $\BSp_L:= \BSp\cap \GL(A)\ppar{\provomega}$, the symplectic part of 
$\GL(A)\ppar{\provomega}$;
\item $\BSp_L^+:= \BSp\cap (\Id +\provomega\cdot\End(A)\bbrak{\provomega})$. 
\end{itemize}
The term ``symplectic loop group" is sometimes used for $\BSp_L$, 
but it really is the \emph{twisted form} of the loop group of $\GL(A)$. 
The subgroup $\BSp_L^+$ contains the matrix series $E(\provomega)$ of \S\ref
{gwconst}. In \cite{giv, giv3}, Givental described the Kontsevich-Manin relation 
between descendent and ancestor potentials of Gromov-Witten theory in terms 
of the action of $\BSp_L$, without assuming semi-simplicity. In addition, 
he proposed (and proved, for toric Fano manifolds) a formula for the value 
of the GW ancestor potential $\cA_u$ at a semi-simple point $u$ of quantum 
cohomology. This was formulated in terms of the action $\BSp^+_L$, using 
ingredients which appear in Dubrovin's isomonodromy description \cite{dub} of 
semi-simple Frobenius manifolds. 

Call $A^{DM}$ the subspace of those vectors in the cohomology $\prod_{g,n} 
H^\bullet(\ol{M}{}_g^n; (A^*)^{\otimes n})$ of all Deligne-Mumford spaces 
which are invariant under the symmetric groups. A DM field theory $\ol{Z}$ 
defines a vector in $A^{DM}$, by restricting to surfaces with no output points. 
(Furthermore, if the nodal factorisation rule $D$ is specified, $\ol{Z}$ is 
in turn determined by this vector.) A distinguished vector $I_A\in\prod H^0$ 
represents the trivial CohFT based on $A$. Let $\BH^+, \BH^{++}$ denote 
the subspaces $\provomega{A}
\bbrak{\provomega}$ and $\provomega^2{A}\bbrak{\provomega}$ in the 
Heisenberg group $\BH$, acting on $\BF\ppar{\hbar}$ by translation. In 
\S\ref{quadhamilt}, we describe an action of $\BSp^+_L\ltimes \BH^+$ 
on $A^{DM}$, which lifts the metaplectic and translation actions on 
potentials.\footnote{A construction along similar lines was alluded 
to in \cite{cks}.} Let $T_x$ denote the translation by $x\in\BH^+$, 
$(T_x\cF)(y) = \cF(y-x)$, and write $T_z$ short for $T_{z\mathbf{1}}$, 
for the unit $\mathbf{1}\in A$. My classification of DMT's will imply the 
following. 

\begin{maintheorem}[2]
The CohFT's with underlying semi-simple Frobenius algebra $A$ constitute 
the $\BSp_L^+\ltimes \BH^{++}$-orbit of the trivial theory $I_A$. 
The theories with flat vacuum form the orbit of the subgroup 
$T_\provomega\circ\BSp_L^+\circ{T}_{\provomega}^{-1}$.
\end{maintheorem}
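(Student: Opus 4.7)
The plan is to climb the hierarchy of FTFT variants in \S\ref{classif}: classify parametrized-boundary theories (type i), enlarge to rotating-boundary theories (type ii), and patch over Deligne-Mumford space (type iv). Throughout, semi-simplicity and the invertibility of the Euler class $\alpha$ propagate large-genus data, where Madsen-Weiss gives total control, down to every genus.

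For type (i), the rational cohomology of the stable mapping class group is the polynomial ring in the $\kappa_j$ by \cite{madw}. Group-likeness under sewing forces the propagator, in the normalised canonical basis, to have diagonal entries $\theta_i^{\chi/2}\exp\bigl(\sum_{j>0}a_{ij}\kappa_j\bigr)$, with the $\theta_i$'s absorbed into a formal term $a_0=\tfrac{1}{2}\log\alpha$; low-genus values are recovered from high-genus ones via the invertible operator $Z({}^1\Sigma_g^1)$. Allowing boundaries to rotate (type ii) introduces a straightening operator $E(\provomega)\in\Id+\provomega\End(A)\bbrak{\provomega}$; in the CohFT setting, where $D=\Id$, sewing compatibility gives the symplectic constraint $E(\provomega)E^*(-\provomega)=\Id$, placing $E\in\BSp_L^+$.

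The central, and hardest, step is the passage from strata to Deligne-Mumford space. On each open boundary stratum of $\ol M{}_g^n$ the type-(ii) data together with $D=\Id$ determine $\bar Z$ through the time-ordered exponential of \S\ref{dmsec1}. The obstacle is uniqueness of the global patch. The key input is that when a boundary divisor involves a component of sufficiently high genus, its Euler class in $H^\bullet(\ol M{}_g^n)$ is not a zero divisor \cite{lo, botil}. Covering $\ol M{}_g^n$ by such ``good'' strata, verifying the Leibniz-style compatibility of stratum-wise classes on overlaps (reducing again to a high-genus computation by semi-simplicity), yields a unique global class $\bar Z$ extending the stratum data.

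Finally, I would translate the classification into Fock space. The metaplectic action of $E\in\BSp_L^+$ realises the rotating-boundary twist, while Heisenberg translations in $\BH^{++}=\provomega^2 A\bbrak{\provomega}$ account for the remaining $\kappa_j$ data; the trivial theory sits at the origin of this parameter space, and every semi-simple CohFT is reached by a unique $\BSp_L^+\ltimes\BH^{++}$-element. For the flat-identity version, the constraint forces the type-(i) class $\tilde Z$ to be determined from $E$ by the formula relating them via the unit-forgetting behaviour; unwrapping this shows that the effective orbit group is conjugated by the translation $T_\provomega$, yielding $T_\provomega^{-1}\BSp_L^+ T_\provomega$. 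The main obstacle throughout is the patching step over Deligne-Mumford space, for which the non-vanishing Euler class inputs from \cite{lo, botil} are indispensable.
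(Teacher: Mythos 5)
Your overall architecture matches the paper's: Madsen--Weiss plus invertibility of the Euler class $\alpha$ for the smooth-surface classification, the non-zero-divisor property of boundary Euler classes for unique patching over Deligne--Mumford space, and a final translation into the Fock-space picture. One under-specified point in the patching step: "covering $\ol{M}{}_g^n$ by good strata" hides the actual mechanism by which every relevant node is made to lie on a large-genus component --- the paper attaches a moving surface of large genus $G$ at a marked point, embeds $\ol{M}{}_g^n\times{}^1M_G^1$ into the boundary of $\ol{M}{}_{g+G}^n$, and stratifies by the topological type of the component carrying that point, inducting on type. Without some such device the Looijenga input does not apply to low-genus strata directly.

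The genuine gap is in your last paragraph. The metaplectic action of the pointwise multiplication operator $\widehat{E}\in\BSp$ is \emph{not} implemented on the classes $\ol{Z}{}_g^n$ by the rotating-boundary twist $x_i\mapsto E^{-1}(\psi_i)x_i$ alone: that geometric twist turns the nodal co-form $C'=\Id$ of the trivial theory into $E(\provomega_2)E^*(\provomega_1)$, so by itself it leaves the CohFT locus. One must (a) lift the quadratic Hamiltonians to the space of classes by Thom-class boundary insertions as in \eqref{quadaction}, (b) observe that composing the twist with $e^{W_E}$, $W_E=(E(\provomega_2)^{-1}E(-\provomega_1)-\Id)/(\provomega_1+\provomega_2)$, restores $C'=\Id$, and (c) prove that this composite coincides with the metaplectic action of $\widehat{E}$ (Proposition~\ref{2emb}); otherwise you have only exhibited the orbit of a group abstractly isomorphic to $\BSp_L^+$, not of the loop group acting as Theorem~1 asserts. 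Similarly, "Heisenberg translations in $\BH^{++}$ account for the remaining $\kappa_j$ data" is the conclusion of a computation, not a formality: integrating the $\zeta(\psi)$-insertions along the universal curve yields the differential equation solved in Proposition~\ref{action} and the explicit dictionary $\sum_{j}a_j\provomega^j=\log\alpha^{1/2}-\log(\mathbf{1}+\zeta/\provomega)$, which is what makes the $\BH^{++}$-translations sweep out exactly the $\kappa$-class ambiguity. Your flat-identity step is correct once this dictionary is in place.
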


\noindent The group element of $\BSp_L^+\ltimes \BH^{++}$ taking $I_A$ to the 
theory with classification data $\{A, E(z),\tilde{Z}{}^+\}$ in \S\ref{classif} 
is $E(z)\cdot \zeta$, with 
\[
\zeta = \provomega\exp\left(-\sum\nolimits_{j>0}a_j\provomega^j\right)
	-\provomega \in \BH^{++}. 
\]
This formula is closely related to the coordinate changes studied by 
Kabanov and Kimura\footnote{I am grateful to V.~Tonita for pointing 
this out.} \cite{kabkim}. 

Note that this $\zeta$ contains no $\provomega$-linear 
term. Adding a term $\zeta_1\provomega$, with $\zeta_1 = \sum_i\zeta_{i1}P_i$ 
turns out to change the structure constants $\theta_i$ of $A$, scaling 
them by $(1+\zeta_{i1})^2$  (Proposition~\ref{action}). Every 
complex semi-simple Frobenius algebra can be obtained in this way from 
a sum of copies of the trivial one, $\bC$ with $\theta(1)=1$. It is 
tempting to say that all semi-simple CohFT of the same rank constitute a 
single $\BSp_L^+\ltimes \BH^{+}$-orbit, but there is trouble when 
some $\zeta_{i1}=-1$: in other words, the action of the linear modes $zA\in\BH^+$ 
on $A^{DM}$ has some singularities, so this re-formulation of the first 
part of Theorem~2 requires some care.

Translation by $\provomega$ is the \emph{dilaton shift} of the literature; 
it encodes the expression of $\zeta$ from $E$ in flat vacuum theories. With 
a general vacuum vector $\mathbf{v}(z)$ (as in \S\ref{morevac}), 
we are instead looking at the set $T_{z\mathbf{v}(z)}\circ\BSp_L^+\circ{T}_
{\provomega}^{-1} (I_A)$; cf.~\S\ref{flatidagain}. Even more generally, 
abandoning the CohFT condition to allow $D\neq \Id$ enlarges the space 
of DM theories to the orbit of a larger subgroup $\BSp^+\subset 
\BSp$; this requires a slightly different setup and will be discussed 
in \S\ref{quadhamilt}, where the proof of Theorem~2 is completed.

\begin{remark}
The translation action of $\BH^+$ on the space of CohFT's has an analogue 
for the zero-modes $A\in A\bbrak{\provomega}$: this leads to the 
Frobenius manifold mentioned in \S\ref{18}. The interaction of these translations 
with the group $\BSp^+_L\ltimes\BH^+$ is rather complicated, given by a system of 
ODE's which we will derive in \S\ref{basdiff}. For instance, $A$-translations 
and $\BH^+$-translations do not commute. In the setting of open-closed 
theories, translation along the Frobenius manifold and that by $\BH^+$ 
correspond to deformations of the TFT coming from independent sources: 
to wit, deformation of the category of boundary states, versus deformation 
of the cyclic trace.\footnote{Unfortunately, the author does not know of a 
written reference detailing this point of view.}  
\end{remark}

\section{Field Theories from universal classes}\label{1}
We now review the definitions of FTFTs from the perspective of 
classifying spaces of oriented surface bundles. In the process, we 
complete the definition of Cohomological Field theories; however, the 
list of axioms for more general DM theories is only completed in 
\S\ref{nodalrelations}, after listing some explicit conditions.

We may switch between oriented topological, smooth, metric and Riemann 
surfaces as convenience dictates, because these structures are related by 
contractible spaces of choices (the spaces of Riemannian metrics, or 
metrics up to conformal equivalence), so their classifying spaces --- 
the bases of universal surface bundles --- are homotopy equivalent. 
Similarly, we can describe boundary circles more economically as follows. 

\subsection{Points versus boundaries.}\label{bdpoints}
Call a surface $(m,n)$-pointed if it carries a set of $m+n$ distinct unordered 
points, separated into $m$ \emph{incoming} and $n$ \emph{outgoing} ones. 
Given a vector space $A$, the base $X$ of an $(m,n)$-pointed surface bundle 
$\Sigma_X$ carries local systems $A^{(m)}, A^{(n)}$ with fibres $A^
{\otimes{m}}, A^{\otimes{n}}$, permuted by the monodromy in the base. 
Removing open disks centred at the special points shows that, up to 
a contractible space of choices, points contain the same information 
as un-parametrised boundary circles. Moreover, since 
$\mathrm{Diff}_+(S^1)$ is homotopy equivalent to its subgroup of rigid 
rotations, we may capture the parametrisation information, again up to a 
contractible space of choices, by specifying unit tangent vectors, or 
tangent rays. More precisely, there is a torus bundle $\tilde{X}
\twoheadrightarrow X$ with fibre $\bT^m\times\bT^n$, the product of unit 
tangent spaces at the special points.\footnote{$\tilde{X}$ is a \emph
{principal} bundle only if there is no monodromy, that is, if the special 
points can be ordered over $X$.} Up to homotopy, $\tilde{X}$ parametrises 
the surfaces in the family $\Sigma_X$, together with all parametrisations 
of their boundary circles.

\subsection{FTFT's reviewed.} 
Let us recall the functorial definition of FTFT's, and then convert 
the data to a collection of cohomology classes on the classifying 
spaces of surface bundles. This is especially necessary for DMT's, where 
we must formulate the nodal factorisation and vacuum axioms mentioned 
in \S\ref{classif}.iv.

\begin{itemize}\itemsep0ex
\item A \textit{family TFT with fixed boundaries} and coefficients in $A$ 
assigns to each family $\Sigma_X \mapsto X$ of closed oriented 
$(m,n)$-pointed surfaces a class 
\[
\tilde{Z}(\Sigma_X) \in H^\bullet(\tilde{X}; \Hom(A^{(m)}, A^{(n)})).
\] 
This must be functorial in $\tilde{X}$ and subject to the condition that 
sewing together any collection of incoming-outgoing boundary pairs 
gives the corresponding composition of linear maps. 
\item In a \emph{free boundary FTFT}, the class $Z(\Sigma_X)$ lives in $H^\bullet
(X; \Hom(A^{(m)}, A^{(n)}))$, is functorial in $X$, and the sewing 
condition must hold for \emph{any} given identification over $X$ of an 
incoming-outgoing boundary pair. 
\item A \emph{Lefschetz} FTFT assigns 
such $\ol{Z}$'s functorially to (chiral) Lefschetz fibrations of closed 
oriented pointed surfaces. 
\item Finally, a \textit{Deligne-Mumford FTFT} is a Lefschetz FTFT for 
\emph{stable} surfaces, satisfying a \emph{nodal factorisation rule} and 
a \emph{vacuum axiom}. We describe these in \S\S\ref{dmfactrule}--\ref{vacax} 
below, after introducing the universal classes ${}^p\ol{Z}{}^q$. 
\end{itemize}

\begin{remark}\label{stab}
\begin{enumerate}\itemsep0ex
\item Single surfaces define a commutative Frobenius algebra structure on $A$.
\item ``Sewing'' of pointed surfaces in a family is well-defined, up to 
homotopy, from an identification of tangent rays at the matched points.
\item As usual, nodes and special points must avoid each other. 
\item \textit{Stability} of surfaces leads to an \emph{orbifold} 
description of the moduli of nodal surfaces, but this does not play a 
role here. More important is the connection with Gromov-Witten theory, 
which forces us into the setting of Deligne-Mumford spaces and cohomological 
field theories. The classification of semi-simple theories remains unchanged 
for Lefschetz theories, which allow \textit{pre-stable curves}.
\end{enumerate}
\end{remark}

\subsection{Reformulation using universal classes.} 
Let ${}^q{M}_g^{p}$ denote the classifying space of the universal 
surface with $p+q$ distinct ordered points, and denote by ${}^q\tilde
{M}{}_g^p$ (or alternatively, ${}_q{M}_{g,p}$, as is common in the 
literature) the principal torus bundle defined by all choices of tangent 
rays at those points. Functoriality reduces a fixed-boundary FTFT to the 
specification of universal classes 
\[
{}^q\tilde{Z}_g^p\in H^\bullet_{\frS_p\times\frS_q}
	\left({}^q\tilde{M}_g^p; \mathrm{Hom}(A^{\otimes p}; 
	A^{\otimes q})\right), 
\]
where the symmetric groups $\frS_p,\frS_q$ act on ${}^q{M}_g^{p}$ by 
permuting marked points and simultaneously on $A^{\otimes{p,q}}$ by 
permuting the factors. Over $\bC$, equivariance under finite groups 
simply means invariance. With free boundary theories, we obtain classes 
${}^q Z_g^p$ over ${}^q M_g^p$, and in the case of DM theories, 
${}^q\ol{Z}{}_g^p$ over the Deligne-Mumford compactifications ${}^q\ol{M}
{}_g^p$.

The classifying space for the universal Lefschetz fibration has model which 
is perhaps less familiar, as a finite-dimensional complex algebraic Artin 
stack ${}^q\ol{A}{}_g^p$ of infinite type, classifying nodal curves with 
arbitrary chains and trees of rational curves. This has an infinite descending 
normal-crossing stratification, reflecting the unlimited bubbling that can 
occur in families. 

\subsection{Sewing conditions.} \label{propdesc}
Sewing two specified boundary components together defines maps, uniquely 
up to homotopy (with $x = x'+x''$ for $x=g,p,q$)
\begin{equation}\label{propmap}
s: {}^{q'}\tilde{M}_{g'}^{p'} \times {}^{q''}\tilde{M}_{g''}^{p''} 
	\to {}^{q-1}\tilde{M}_{g}^{p-1},
\end{equation}
and similar maps where several pairs of boundaries are simultaneously 
identified. The FTFT sewing condition is 
\[
s^*\left( {}^{q-1}\tilde{Z}^{p-1}\right) = {}^{q'}\tilde{Z}^{p'}
		\circ {}^{q''}\tilde{Z}^{p''},
\]
with composition of the appropriate entries. Self-sewing in a family of 
single surfaces is also permitted, but the result could be re-expressed 
by means of sewing on elbows.

Free boundary FTFT's are different, in that the sewing maps \eqref{propmap} 
does not descend to the base moduli spaces $M, M', M''$ for surfaces with 
free boundaries: sewing requires an identification of the boundaries. A  
natural circle bundle $\pi: \partial{N} \twoheadrightarrow M' \times M''$ 
parametrises the possible identifications. This $\partial{N}$ is also 
(the pull-back to $M'\times{M}''$ of) the circular neighbourhood of a 
divisorial boundary stratum  in $\ol{M}$, image of $M'\times{M}''$ under 
a \emph{boundary map} (see \eqref{dmop} below). Functoriality 
stipulates that, after contracting out the $A,A^*$ factors 
of the two sewing indices, the pull-back $\pi^*({}^{q'}Z_{g'}^{p'} \times 
{}^{q''} Z_{g''}^{p''})$ agrees with the restriction of the class 
${}^{q-1}Z_{g}^{p-1}$ to $\partial{N}$. 

\subsection{Nodal factorisation in Lefschetz theories.} \label{nodexplained}
Every Lefschetz theory carries a \emph{nodal factorisation rule}, which describes 
$\ol{Z}(\Sigma)$, for a nodal surface family $\Sigma$, in terms of the 
normalised family $\tilde{\Sigma}$. This rule is a consequence of the sewing 
condition: cutting out the pair of crossing disks near a chosen node expresses 
$\ol{Z}(\Sigma)$ as a contraction of $\ol{Z}(\tilde\Sigma)$ with the crossing 
disk family. Functoriality describes the latter by a universal formula in the 
Euler classes of the two tangent spaces at the node. Thus, for of a pair of 
marked points of opposite type, the relevant operator is the nodal propagator  
$Z(\nodprop) = D(-\omega_+,\omega_-)\in \End(A)\bbrak{\omega_\pm}$ mentioned 
in \S\ref{classif}.iii. 
Similarly, the effect of attaching two output points of $\tilde\Sigma$ 
into a node is controlled by a bilinear form $B$ on $A$ with values 
in $k\bbrak{\omega_\pm}$, while inputs involve a co-form $C\in (A\otimes A)
\bbrak{\omega_\pm}$.

The tensors $B,C$ and $D$ are not independent: each of them determines 
the other two, by a formal game with connecting elbows. In addition, $B$ and $C$ 
are symmetric under a switch of the two disks, and this also translates into a 
symmetry constraint on $D$. We will list the explicit formulae in \S\ref{BCD} 
below.

\begin{remark}
 When lifted from $M'\times{M}''$ to $\partial{N}$, the nodal 
factorisation law  becomes precisely the smooth surface sewing axiom, by virtue 
of the identity $D(-\omega,\omega)=\Id$.   
\end{remark}

\subsection{Deligne-Mumford factorisation rules.}\label{dmfactrule}
The nodal factorisation condition can also be formulated in a DMT, but as 
the pair of crossing disks is an unstable surface, the cutting argument used 
to derive it in Lefschetz theories is no longer valid. We therefore adopt 
these rules as an additional DMT axiom, and now spell them out.
 
Universally on Deligne-Mumford spaces, attaching marked points 
define the following boundary morphisms, differing only in the type 
of the attaching points:
\begin{eqnarray}\label{dmop}
&& b_2^D:{}^{q'}\ol{M}_{g'}^{p'} \times {}^{q''}\ol{M}_{g''}^{p''} 
	\to {}^{q-1}\ol{M}_{g}^{p-1}, \qquad 
	b_1^D:{}^{q}\ol{M}_{g}^{p} \to {}^{q-1}\ol{M}_{g+1}^{p-1},\nonumber \\
&&b_2^C:{}^{q'}\ol{M}_{g'}^{p'} \times {}^{q''}\ol{M}_{g''}^{p''} 
	\to {}^{q}\ol{M}_{g}^{p-2}, \qquad 
	b_1^C:{}^{q}\ol{M}_{g}^{p} \to {}^{q}\ol{M}_{g+1}^{p-2},\\
&&b_2^B:{}^{q'}\ol{M}_{g'}^{p'} \times {}^{q''}\ol{M}_{g''}^{p''} 
	\to {}^{q-2}\ol{M}_{g}^{p}, \qquad 
	b_1^B:{}^{q}\ol{M}_{g}^{p} \to {}^{q-2}\ol{M}_{g+1}^{p}. \nonumber
\end{eqnarray}
(There are corresponding maps for the Artin classifying stacks $^q\ol{A}{}^p$ 
of Lefschetz fibrations.) 

DMT's are required to satisfy a factorisation rule under each of these maps, 
involving contraction with specified tensors $B, C$ and $D$. Thus, for $b_2^B: 
{}^1\ol{M}{}_{g'}^{n'}\times {}^1\ol{M}{}_{g"}^{n"} \to \ol{M}{}_g^{n}$, we 
require  
\begin{equation}\label{Bcont}
\big(b_2^B\big){}^*\ol{Z}{}_g^{n} = 
	B(\omega',\omega")\left({}^1\ol{Z}{}_{g'}^{n'}\otimes 
	{}^1\ol{Z}{}_{g"}^{n"}\right),
\end{equation}
where $\omega', \omega''$ are the two Euler classes at the node; similarly 
for the other maps, with $D(-\omega',\omega'')$ and $ C(-\omega',-\omega'')$, 
respectively. (The choice of signs here is adapted to our later use of 
$\psi$-classes, in lieu of Euler classes.) 

The tensors $B,C,D$ should satisfy the consistency constraints already mentioned 
(and spelt out in \S\ref{BCD}), which are guaranteed in a Lefschetz theory. 
As it turns out, these constraints are also guaranteed in a semi-simple DM 
theory; so we could omit them from the axiom in this case. Refer to~\S\ref
{nodalrelations} below for more detail.

\begin{remark}
In the familiar case of CohFT's, we require that $D=\Id$, $B=\beta$, and 
$C$ is the inverse co-form. Factorisation rules with interesting $B$ appear 
in generalised-cohomology Gromov-Witten theory \cite{tomsasha} (although the 
dependence on $\omega',\omega''$ has a very special form there, $D$ is scalar). 
\end{remark}

\subsection{Vacuum axiom.}\label{vacax}
In a Lefschetz theory, a distinguished vector $\mathbf{v}(z)\in{A}\bbrak{z}$, 
defined by the universal sphere with a single output, has the following property: 
for any Lefschetz fibration $\Sigma_X$ with $n>0$ input points and the associated 
family $\Sigma'_X$ which ignores the first input, we have 
\[
\ol{Z}(\Sigma_X)\big(\mathbf{v}(\omega_1),x_2,\dots,x_{n}\big) =
	\ol{Z}(\Sigma'_X)(x_2,\dots,x_{n}),
\]
where $\omega_{1}$ is the the first input  Euler class. 
This is the smooth surface sewing rule at work. 

Again, this story fails in a DMT, so our final DMT axiom is the specification 
of a ``vacuum'' vector $\mathbf{v}(z)$, which must satisfy the following condition 
in the case of a CohFT. Let $\varphi: \ol{M}{}_g^{n} \to \ol{M}{}_g^{n-1}$ 
be the morphism of Deligne-Mumford spaces induced by forgetting the first marked 
point. Then, 
\[
\ol{Z}{}_g^{n+1}\big(\mathbf{v}(\omega_1),x_2,\dots,x_{n}\big) =  
\varphi^* \ol{Z}{}_g^n(x_2,\dots,x_{n}).
\]
The vacuum condition is more complicated in general DMT's with $D\neq\Id$, where 
it gets corrected by boundary terms. The reason is that $\varphi$ does \emph{not} 
classify the point-forgetting map on nodal surfaces: the universal 
curve over $\ol{M}{}_g^{n-1}$ lifts to a \emph{contraction} of the one over 
$\ol{M}{}_g^{n}$. This contraction turns out to be inoffensive for $\ol{Z}$ 
in a CohFT, but not so in general. We will not use the vacuum in general 
DMT's, so will not spell out the correction terms. 

Later, we will concentrate  on the special class of CohFT's with flat vacuum, 
when $\mathbf{v}=\mathbf{1}$.

\subsection{PROP description.} 
The sewing maps \eqref{propmap} assemble to a \emph{PROP structure} 
on the spaces ${}^q\tilde{M}_{g}^p$, which carries over to their homology. 
In this language, an FTFT is equivalent to an algebra over the homology PROP. 
Similarly, the Deligne-Mumford boundary morphisms \eqref{dmop} give a PROP 
structure on $H_\bullet(\ol{M}_g)$.  In this case, self-sewing of single 
surfaces enhances this to a \emph{wheeled PROP} (a notion introduced in 
\cite{mms}). Cohomological field theories are algebras over the associated 
homology PROP of DM spaces, but to capture the full CohFT structure, we 
must add a \emph{cyclic} structure, permuting inputs and outputs. (We 
lost the ability to switch inputs and outputs by means of elbows.) 
DMT's with general $D$ are algebras over a twisted form of the DM homology PROP. 
Free boundary FTFT's do not fit into PROP language, for the reason explained 
in \S\ref{propdesc}.

\subsection{Tautological classes.}\label{tautrefresh}
The classification will describe the various field theories in terms 
of the \emph{tautological classes} on the moduli of surfaces. We 
briefly recall the generating tautological classes on $\ol{M}{}_g^n$; 
those on $M_g^n, \tilde{M}{}_g^n$ are obtained by restriction.  
Let $\varphi: \ol{M}{}_g^{n+1} \to \ol{M}{}_g^n$ be the map 
forgetting the last marked point. The marked points define $n$ 
sections $\sigma_i$ of $\varphi$, with smooth divisors $[\sigma_i]$ 
as their images. Let $T^*_\varphi$ be the relative cotangent 
complex of $\varphi$ and define 
\[
\psi_i:= \sigma_i^* c_1(T^*_\varphi), \qquad 
	\kappa_j = \varphi_* \big(\psi_{n+1}^{j+1}\big),
\]
(where $\psi_{n+1}$ on $\ol{M}{}_g^{n+1}$ is defined using $\sigma_{n+1}$ 
and $\ol{M}{}_g^{n+2}$). These classes satisfy the relations 
\[\begin{split}
 [\sigma_i]\cdot\psi_i = &[\sigma_i]\cdot\psi_{n+1} = 0, \\
 \psi_i^k = \varphi^* \psi_i^k + \sigma_{i}{}_*(\psi_i^{k-1})&, 
\qquad  \kappa_j = \varphi^*\kappa_j + \psi_{n+1}^j.
\end{split}\]
The correction term $\sigma_{i}{}_*(\psi_i^{k-1})$ in the first 
relation is only visible on $\ol{M}{}_g^n$, but the one for 
$\kappa$ also appears on $M_g^n$. Thus defined, the $\kappa_j$ are 
\emph{primitive}: that is, under the boundary maps \eqref{dmop},
\[
b_2^*(\kappa_j) = \kappa_j' + \kappa_j'', \qquad b_1^*(\kappa_j) = \kappa_j.
\] 
Additional tautological classes on Deligne-Mumford spaces arise by the recursive 
pushing forward of polynomials in the $\kappa-$ and $\psi-$classes from boundary 
divisors.

\subsection{The stability theorems.}
The key to the classification are two stability theorems, due to Harer 
\cite{har} (later improved by Ivanov \cite{iv}), and to Madsen and Weiss 
\cite{madw}, respectively. For the first theorem, let $M_{g,m}^n$ be the base 
family of the universal surface of genus $g$ with $m+n$ ordered points, 
equipped with unit tangent vectors at the first $m$ special points. 

\begin{theorem}[``Harer stability" \cite{har,iv}]
The maps $M_{g,m}^n \to M_{g,m-1}^n$ and $M_{g,m}^n \to M_{g+1,m}^n$, 
defined (up to homotopy) by sewing in a disk, respectively by sewing 
on a two-holed torus, induce homology isomorphisms in degree less than 
$(g-1)/2$. 
\end{theorem}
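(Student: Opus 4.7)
The plan is to reduce to a statement about mapping class groups and then apply the classical argument of Harer, in the streamlined form due to Ivanov, based on the action of the mapping class group on a highly-connected simplicial complex of arcs/curves. Since each $M_{g,m}^n$ is an Eilenberg--MacLane space $K(\Gamma_{g,m}^n, 1)$, where $\Gamma_{g,m}^n$ is the mapping class group of a compact oriented surface of genus $g$ with $m$ parametrised boundary circles and $n$ marked interior points, the theorem becomes the assertion that the two stabilisation homomorphisms $\Gamma_{g,m}^n \to \Gamma_{g,m-1}^n$ (forget a tangent-framed boundary, cap it off with a disk carrying a marked point) and $\Gamma_{g,m}^n \to \Gamma_{g+1,m}^n$ (glue a two-holed torus along one boundary) induce isomorphisms on $H_*$ in degrees less than $g/2$.

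First, I would set up the two semi-simplicial complexes adapted to each map. For the genus-stabilisation map, take the complex $\mathcal{X}_g$ of isotopy classes of collections of disjoint, pairwise non-isotopic non-separating simple closed curves on $\Sigma_{g,m}$ whose complement is connected (Harer's ``complex of non-separating curves''). For the boundary-stabilisation map, take the complex of isotopy classes of disjoint embedded arcs starting at a fixed boundary component (or from one fixed boundary to another). The key technical step is to establish that both complexes are $\lfloor (g-1)/2 \rfloor$-connected; this is proved by an induction on $g$ using Hatcher-style ``bad simplex'' surgery arguments on arc complexes, with the curve case bootstrapped from the arc case by cutting along the boundary of a regular neighbourhood.

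With the connectivity result in hand, I would feed it into the Quillen-type equivariant spectral sequence
\[
E^1_{p,q} = \bigoplus_{[\sigma]} H_q\!\left(\mathrm{Stab}_{\Gamma_{g,m}^n}(\sigma);\, \bZ^{\pm}\right) \;\Longrightarrow\; H_{p+q}(\Gamma_{g,m}^n)
\]
for the action on $\mathcal{X}_g$ (the coefficient system tracks orientations of simplices). Stabilisers of $p$-simplices are, up to finite-index subgroups, mapping class groups of the surfaces obtained by cutting along the chosen curves/arcs, i.e.\ mapping class groups of surfaces of strictly smaller genus (or same genus with more boundaries/marked points). A downward induction on $p$ combined with an outer induction on $g$ then identifies the associated graded of $H_*(\Gamma_{g,m}^n)$ with that of $H_*(\Gamma_{g-1,m+2}^n)$ in the stable range, and a comparison of the two spectral sequences for $\Gamma_{g,m}^n$ and $\Gamma_{g+1,m}^n$ (respectively $\Gamma_{g,m-1}^n$) shows that the stabilisation map is an isomorphism below degree $g/2$. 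Throughout, one must juggle the two stabilisation maps together --- boundary stabilisation is used in the inductive hypothesis to handle the stabiliser terms of the genus spectral sequence, and vice versa.

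The main obstacle is, as usual, the connectivity estimate for the arc/curve complex: getting the precise slope $g/2$ (rather than a weaker $g/3$ or $g/4$) requires the careful surgery argument of Ivanov, who sharpened Harer's original bounds. A secondary subtlety is the bookkeeping for the marked points $n$ and the twisted coefficients coming from reversal of orientation on simplices; these are handled by enlarging the inductive statement to allow all $(g,m,n)$ simultaneously and by passing to an index-two subgroup that preserves orientations on every simplex, with no effect on rational homology. Once the connectivity and the comparison of spectral sequences are in place, the theorem as stated follows directly.
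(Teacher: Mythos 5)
The paper does not prove this statement: it is quoted as an external input, attributed to Harer \cite{har} with the improved stability range due to Ivanov \cite{iv}, and the rest of the paper uses it as a black box. So there is no internal proof to compare against. Your proposal is a recognisable outline of the standard Harer--Ivanov argument --- identify $M_{g,m}^n$ with a $K(\Gamma_{g,m}^n,1)$ for the appropriate mapping class group, act on a highly-connected complex of arcs or non-separating curves, and run the Quillen-type equivariant spectral sequence with a double induction in which the two stabilisation maps bootstrap each other --- and that is indeed the correct architecture. But as a proof it is not self-contained: the entire technical content of the theorem lives in the connectivity estimate for the arc and curve complexes, which you assert (``is proved by an induction\dots using Hatcher-style bad-simplex surgery'') rather than prove, and in the identification of simplex stabilisers, which are not literally mapping class groups of cut surfaces but extensions involving the Dehn twists about the chosen curves and the permutations of the vertices of a simplex; ``up to finite-index subgroups'' undersells this. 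Two smaller points: first, the map $M_{g,m}^n \to M_{g,m-1}^n$ in the statement keeps $n$ fixed, so the capping is by a disk \emph{without} a marked point; your parenthetical ``disk carrying a marked point'' would land in $M_{g,m-1}^{n+1}$. Second, the range ``degree less than $g/2$'' genuinely requires Ivanov's sharpening (Harer's original slope is roughly $g/3$), which you correctly flag but do not supply. In short: right skeleton, but the hard part is exactly what is missing; for the purposes of this paper that is acceptable, since the author likewise imports the theorem rather than proving it.
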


\noindent
An important consequence describes the homological effect 
of adding marked points: 

\begin{corollary}[Looijenga, \cite{lo}]\label{lofree}
In the stable range of total degree $< (g-1)/2$, we have 
\[
H^\bullet(M_g^n;\bQ) \cong H^\bullet(M_g;\bQ)[\psi_1, \dots,\psi_n].
\]
\end{corollary}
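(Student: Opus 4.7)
My plan is to use iterated Harer stability together with the Leray--Serre spectral sequence of the tangent-framing bundle at the marked points.

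Consider the principal torus bundle $\pi:M_{g,n}^{0}\to M_g^n$, where $M_{g,n}^0$ is the moduli of genus-$g$ surfaces with $n$ \emph{parametrised} boundary circles (equivalently, $n$ ordered marked points equipped with unit tangent vectors). The $i$-th factor of the structure group $(S^1)^n$ rotates the $i$-th tangent circle, so its Euler class on $M_g^n$ is $-\psi_i$ by the definition of $\psi_i$ as $c_1(T^*_{p_i})$. On the other hand, iterating the Harer stability isomorphism $M_{g,k}^{0}\to M_{g,k-1}^{0}$ yields an isomorphism $H^\bullet(M_{g,n}^{0};\bZ)\cong H^\bullet(M_g;\bZ)$ in the range $\ast<g/2$, induced geometrically by the composite cap-off map $M_{g,n}^{0}\to M_g$.

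I then run the Leray--Serre spectral sequence of $\pi$:
\[
E_2^{p,q}=H^p(M_g^n;\bZ)\otimes\Lambda^q[\alpha_1,\dots,\alpha_n]\Longrightarrow H^{p+q}(M_{g,n}^{0};\bZ).
\]
The transgression is the derivation $d_2(\alpha_i)=-\psi_i$, and all higher differentials vanish since $d_r(\alpha_i)\in E_r^{r,2-r}=0$ for $r\ge 3$. So $d_2$ is precisely the Koszul differential on $H^\bullet(M_g^n;\bZ)$ associated to the sequence $(\psi_1,\dots,\psi_n)$. Moreover, the composite $M_{g,n}^{0}\xrightarrow{\pi}M_g^n\to M_g$ realises the Harer isomorphism, so the whole abutment is pulled back from $M_g$ through $M_g^n$ and therefore lies in the lowest Leray filtration level $E_\infty^{\bullet,0}$. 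Consequently, in the stable range, the Koszul cohomology of $H^\bullet(M_g^n;\bZ)$ with respect to $(\psi_1,\dots,\psi_n)$ is concentrated in Koszul degree zero and equals $H^\bullet(M_g;\bZ)$.

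A standard algebraic consequence is that $(\psi_1,\dots,\psi_n)$ is then a regular sequence in $H^\bullet(M_g^n;\bZ)$ and $H^\bullet(M_g^n;\bZ)/(\psi_1,\dots,\psi_n)\cong H^\bullet(M_g;\bZ)$. The forgetful pullback $H^\bullet(M_g;\bZ)\to H^\bullet(M_g^n;\bZ)$, postcomposed with this quotient, recovers the Harer isomorphism and hence splits it canonically; together with the regularity, this yields the desired ring isomorphism $H^\bullet(M_g^n;\bZ)\cong H^\bullet(M_g;\bZ)[\psi_1,\dots,\psi_n]$ in the stable range.

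The main obstacle is the stable-range bookkeeping together with potential extension issues in the spectral sequence: each Harer cap-off is a homology isomorphism in the range $\ast<g/2$, so iterating $n$ times loses nothing, but one must verify that the Leray--Serre abutment is valid in this same range and that $H^\bullet(M_{g,n}^{0})$ is correctly reconstructed from its associated graded. Because $E_\infty$ turns out to be concentrated in a single filtration level, there are in fact no extensions to worry about, so these concerns are benign.
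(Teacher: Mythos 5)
The paper offers no proof of this corollary---it simply cites Looijenga---so I am judging your argument on its own terms. Your overall strategy is the right one and is essentially the standard argument: Harer stability applied to the cap-off maps $M_{g,k}^{0}\to M_{g,k-1}^{0}$ shows that the pullback $H^\bullet(M_g)\to H^\bullet(M_{g,n}^{0})$ is an isomorphism in degrees $<g/2$; since this pullback factors through $\pi^*\colon H^\bullet(M_g^n)\to H^\bullet(M_{g,n}^{0})$, the latter is surjective in that range, and the Euler classes $-\psi_i$ of the tangent-framing circles then control the difference between $M_g^n$ and $M_{g,n}^{0}$.

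However, one step is under-justified and it is load-bearing: the claim that the Leray--Serre spectral sequence of the $(S^1)^n$-bundle degenerates at $E_3$, so that $E_\infty$ equals the Koszul homology of $(\psi_1,\dots,\psi_n)$. Your argument---$d_r(\alpha_i)\in E_r^{r,2-r}=0$ for $r\ge 3$---only shows that no higher differential originates on the fibre generators $\alpha_i$; it does not rule out $d_r$ (for $3\le r\le q+1$) on classes in the rows $q\ge 2$, which need not be products of surviving classes from $E_r^{\bullet,0}$ and $E_r^{0,\bullet}$. Without $E_3=E_\infty$, the vanishing of $E_\infty^{p,q}$ for $q>0$ does not yield the vanishing of the Koszul homology, and both the surjectivity and the injectivity of $H^\bullet(M_g)[\psi_1,\dots,\psi_n]\to H^\bullet(M_g^n)$ break down. (The subsequent ``standard algebraic consequence''---Koszul acyclicity implies regular sequence implies polynomial ring---is fine in the graded, degreewise finitely generated setting, but it rests entirely on that identification.) The clean repair is to add the marked points one at a time: the unit tangent circle at the $n$-th point gives a circle bundle $c\colon M_{g,1}^{n-1}\to M_g^n$ with Euler class $-\psi_n$; Harer stability makes $c^*$ surjective in degrees $<g/2$, so the Gysin boundary map vanishes, cup product with $\psi_n$ is injective, and $H^\bullet(M_g^n)\cong H^\bullet(M_g^{n-1})[\psi_n]$ follows degree by degree (noting that on the open moduli space the classes $\psi_i$ for $i<n$ are genuine pullbacks from $M_g^{n-1}$, since the $\sigma_i$-correction terms live only on the Deligne--Mumford boundary). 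Induction on $n$ then gives the statement with no spectral-sequence subtleties, and for $n=1$ this is literally your argument.
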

\noindent We reproduce the easy proof. The circle bundle $\pi: M_{g,1} 
\twoheadrightarrow M_g^1$ presents $H^\bullet(M_{g,1};\bQ)$ as the cohomology 
of the differential graded algebra $\{H^\bullet(M_g^1;\bQ)[\eta], d\}$, with  
$\deg\eta=1$ and $d\eta =\psi_1$. Now, thanks to Harer, a right inverse 
of $\pi^*: H^\bullet(M_g^1;\bQ) \to H^\bullet(M_{g,1};\bQ)$ in the stable 
range is provided by the forgetful pull-back $H^\bullet(M_g;\bQ) \to 
H^\bullet(M_g^1;\bQ)$. Therefore, $\pi^*$ is onto, in the stable range. 
But then, $\psi_1$ is not a zero-divisor in that range: if $\psi_1x =0$, 
then $\eta{x}$ is a class which is not in the image of $\pi^*$. From the DGA 
presentation, we conclude that, in the stable range, 
\[
H^\bullet(M_g;\bQ) \cong H^\bullet(M_g^1;\bQ)/(\psi_1)
\]
so that $H^\bullet(M_g)[\psi_1]$ surjects and injects to $H^\bullet(M_g^1)$, 
giving the corollary for $n=1$. Repeat for the other $\psi$.

\begin{theorem}[``Mumford conjecture'' \cite{madw}]
In the stable range, we have  
\[
H^\bullet(M_g;\bQ) = \bQ[\kappa_j],\quad j=1,2,\dots.
\]
\end{theorem}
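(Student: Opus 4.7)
The plan is to reduce to a computation on a stable classifying space, identify that stable object with the infinite loop space of a known Thom spectrum, and then compute rationally.

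First, by Harer stability (the previous theorem, with $m=n=0$), the rational cohomology $H^\bullet(M_g;\bQ)$ in total degree below $g/2$ coincides with the rational cohomology of the classifying space $B\Gamma_\infty$ of the stable mapping class group, formed as the direct limit under the sewing maps. Since rational cohomology is invariant under Quillen's plus construction, the problem is equivalent to computing $H^\bullet(B\Gamma_\infty^+;\bQ)$.

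The heart of the argument, which I anticipate as the main obstacle, is the Madsen-Weiss equivalence
\[
\bZ\times B\Gamma_\infty^+ \;\simeq\; \Omega^\infty MT\mathrm{SO}(2),
\]
where $MT\mathrm{SO}(2)$ is the Madsen-Tillmann Thom spectrum associated to the virtual bundle $-L$ over $BSO(2)=\bC\bP^\infty$, with $L$ the tautological complex line. The proof of this equivalence rests on: (a)~Tillmann's theorem that $\bZ\times B\Gamma_\infty^+$ is an infinite loop space, arising from a category of two-dimensional cobordisms; (b)~construction of a spectrum map out of this infinite loop space via a parametrised Pontryagin-Thom collapse applied to families of oriented surfaces embedded in a large-dimensional Euclidean space, using the vertical tangent line to land in the Thom spectrum of $-L$; and (c)~a group-completion argument, carried out via a semisimplicial resolution of moduli by surfaces with additional embedded arc configurations, reducing the claim to a homology equivalence which is then verified by an induction along this filtration.

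Once the equivalence is available, the rational computation is formal. The Thom isomorphism gives $H^*(MT\mathrm{SO}(2);\bQ)\cong H^{*+2}(\bC\bP^\infty;\bQ)=\bQ[c]$ with degrees shifted down by two, which is one-dimensional in every even degree from $-2$ upward. Consequently $MT\mathrm{SO}(2)$ splits rationally as a product of Eilenberg-MacLane spectra, and the basepoint component $\Omega^\infty_0 MT\mathrm{SO}(2)$ is rationally equivalent to $\prod_{j\geq 1}K(\bQ,2j)$. Its rational cohomology is therefore the free graded-commutative algebra on one generator in each positive even degree $2j$, which is simply a polynomial algebra. To identify these generators with $\kappa_1,\kappa_2,\dots$, one traces the Pontryagin-Thom map: pulling back the degree-$2j$ generator reduces to the fibre integral $\varphi_*(\psi^{j+1})=\kappa_j$ of \S\ref{tautrefresh}, completing the proof.
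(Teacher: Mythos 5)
The paper does not prove this statement: it is imported verbatim from Madsen--Weiss \cite{madw}, and the only surrounding discussion is the Remark after Corollary~\ref{primkappa}, which records the integral form $\bZ\times B\Gamma_\infty^+\simeq\Omega^\infty\bC\bP^\infty_{-1}$. Your outline is a faithful account of how the theorem is actually proved in the literature, and everything downstream of the Madsen--Weiss equivalence in your write-up is correct and essentially complete: Harer stability plus homology-invariance of the plus construction reduces to the stable object; the Thom isomorphism gives one rational class in each even degree $\ge -2$; the rational splitting into Eilenberg--MacLane spectra identifies $\Omega^\infty_0$ of the spectrum with $\prod_{j\ge1}K(\bQ,2j)$, whose cohomology is polynomial on even generators; and the Pontryagin--Thom pullback of the $j$th generator is $\pi_*\bigl(c_1(T_\pi)^{j+1}\bigr)=\pm\kappa_j$ (mind the sign coming from the paper's convention $\psi=c_1(T^*_\varphi)$ versus the tangent line used to build the Thom spectrum --- harmless for generation). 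The one honest caveat is that steps (a)--(c), which you correctly flag as the main obstacle, constitute the entire content of the theorem and are only gestured at; but since the paper itself treats the equivalence as a black box, you have not omitted anything the paper supplies. It is worth noting that the paper's own formal route to ``polynomial on the $\kappa$'s'' (in \S\ref{primclass}, for the equivalent Corollary~\ref{primkappa}) goes through the Hopf algebra structure on $H^\bullet(M_\infty;\bQ)$ and Milnor--Moore, identifying the primitives with the $\kappa$'s, rather than through your rational Eilenberg--MacLane splitting of the spectrum; the two derivations are interchangeable, both resting on Madsen--Weiss for the count of generators.
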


\subsection{Primitive and group-like classes.}\label{primclass}
We conclude by spelling out the role of $\kappa$-classes in our context. 
Genus-stabilisation $M_{g,m}^n \to M_{g+1,m}^n$ defines a limiting homotopy 
type $M_{\infty,m}^n$. This agrees with the classifying space of the \emph
{stable mapping class group} $\Gamma_{\infty, m}^n$ of a surface with 
$m$ fixed and $n$ free boundaries. Harer stability makes the fixed boundaries 
invisible in the homology of the classifying space, while the homological 
effect of free boundaries is described by Corollary~\ref{lofree}; so we 
focus on $M_{\infty,1}$. Sewing two surfaces, with one fixed boundary each, 
into a fixed pair of pants defines a map
\begin{equation}\label{monoidal}
m:{}M_{g,1}\times{M}_{h,1} \to M_{g+h,1},
\end{equation}
which gives a homotopy-commutative monoidal structure on 
$\coprod_g M_{g,1}$ and, in the limit, on $M_{\infty,1}$. The latter 
becomes a group-like topological monoid, and its cohomology $H^\bullet
(M_{\infty,1}; \bQ)$ acquires a (commutative and co-commutative) Hopf 
algebra structure.  By the Milnor-Moore theorem, this must be the 
free power series algebra in the \emph{primitive} cohomology classes, 
that is, the classes $x$ satisfying $m^*(x) = x\otimes{1} +1\otimes x$. 
The $\kappa$'s do have that property (\S\ref{tautrefresh}), so the 
Madsen-Weiss theorem has the following important consequence.

\begin{corollary} \label{primkappa}
All primitive rational cohomology classes on 
$M_{\infty, 1}$ are linear combinations of the $\kappa$'s.  
\end{corollary}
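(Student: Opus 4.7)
The plan is to combine the Madsen–Weiss theorem (which identifies the ring structure) with the Milnor–Moore theorem (which identifies the coalgebra structure via primitives), and then conclude by a degree-by-degree dimension count.

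First, I would record the input Hopf algebra structure on $H^\bullet(M_\infty;\bQ)$. The monoidal product \eqref{monoidal} is homotopy-commutative and associative, so it induces a co-product making $H^\bullet(M_\infty;\bQ)$ into a graded-connected, commutative and co-commutative $\bQ$-Hopf algebra, with finite-dimensional pieces in each degree. The Mumford conjecture (just stated) gives the ring isomorphism $H^\bullet(M_\infty;\bQ)\cong \bQ[\kappa_1,\kappa_2,\dots]$ with $\deg\kappa_j = 2j$, so in particular each graded piece is finite dimensional.

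Next, by the Milnor–Moore theorem, any such Hopf algebra is freely generated, as a commutative algebra, by its primitive elements: $H^\bullet(M_\infty;\bQ)\cong \mathrm{Sym}(P)$, where $P\subset H^\bullet(M_\infty;\bQ)$ is the graded subspace of primitives. Now the co-product induced by \eqref{monoidal} is exactly the restriction to $M_{g,1}\times M_{h,1}$ of the Deligne–Mumford boundary map $b_2$ of \eqref{dmop}, so the relation $b_2^*(\kappa_j)=\kappa_j'+\kappa_j''$ from \S\ref{tautrefresh} says precisely that each $\kappa_j$ lies in $P$. Consequently the linear span $V:=\mathrm{span}_\bQ\{\kappa_j:j\geq 1\}$ is contained in $P$.

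To finish, I would compare Hilbert–Poincar\'e series. From Madsen–Weiss, $H^\bullet(M_\infty;\bQ)=\mathrm{Sym}(V)$, and from Milnor–Moore, $H^\bullet(M_\infty;\bQ)=\mathrm{Sym}(P)$; hence $\mathrm{Sym}(V)\hookrightarrow \mathrm{Sym}(P)$ is a degree-preserving isomorphism. Since the symmetric-algebra functor on graded vector spaces reflects Hilbert series, the inclusion $V\hookrightarrow P$ must be an equality in every graded degree, giving $V=P$. This is exactly the statement of the corollary.

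There is no real obstacle: the only thing to be careful about is that the Hopf algebra hypotheses of Milnor–Moore are genuinely satisfied (graded-connected, co-commutative, finite type in each degree, characteristic zero), and that the co-product used to define primitivity is the one induced by \eqref{monoidal}, which matches the $b_2^*$ formula recorded in \S\ref{tautrefresh}.
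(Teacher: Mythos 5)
Your proposal is correct and follows essentially the same route as the paper: both deduce from Milnor--Moore that $H^\bullet(M_\infty;\bQ)$ is the free commutative algebra on its primitives, note that the $\kappa_j$ are primitive by the additivity recorded in \S\ref{tautrefresh}, and conclude by comparing with the Madsen--Weiss description $\bQ[\kappa_1,\kappa_2,\dots]$. The only cosmetic difference is that you make the Hilbert-series comparison explicit, whereas the paper leaves that final count implicit.
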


\begin{remark}
Corollary~\ref{primkappa} is equivalent to the rational Mumford 
conjecture. Madsen and Weiss prove an \emph{integral} version, 
identifying the homotopy type of the group completion of the 
topological monoid $\amalg{M}_g$ with the infinite loop space $\Omega^
\infty\bC\bP_{-1}^\infty$ of the \emph{Madsen-Tillmann spectrum} 
\cite{madtil}. An integral, in fact spectrum version of Looijenga's 
theorem was found earlier by B\"odigheimer and Tillmann \cite{botil}.
\end{remark}

Another important notion is that of a \textit{group-like} class 
$X\in H^\bullet(M_{\infty,1};\bQ)$, a non-zero class for which $m^*X = X 
\otimes{X}$. It is easy to see that the group-like classes are precisely 
those of the form $\exp(x)$, with primitive $x$.

\section{Smooth surface theories}

Armed with the boundary maps between the $M_g$ and the tautological 
classes, we proceed to classify  FTFT's of the first two types, involving 
smooth surfaces with parametrised or with free boundaries. This might be 
the place to confess to a minor gap in the classification: the definitions 
do not seem to determine the value of the universal $\tilde{Z}_g$ without 
marked points, although a valid choice can always be made from my data. For 
free boundaries, the same ambiguity applies to $Z_1^0$. This last, genus one 
problem persists for Lefschetz theories, but not for DMT's, since $\ol{M}_1$ 
does not exist.

\subsection{Fixed boundary theories.} \label{fixedbd}
With $g=g'+g''$, consider the effect on $\tilde{Z}$-classes of the operation 
of sewing onto the general surface of genus $g'$ a fixed $2$-holed surface of 
genus $g''$: 
\[
{}^1\tilde{Z}_{g} = \alpha^{g''}\cdot{}^1\tilde{Z}_{g'} \quad\text{on } {}^1\tilde{M}_{g'}
\] 
where $\alpha\in A$ is the Euler class of \S\ref{ss=invert}, and the left-side 
class has been restricted to ${}^1\tilde{M}_{g'}$. 
When $\alpha$ is invertible, it follows that $\alpha^{-g}\cdot{}^1\tilde{Z}_g$ 
stabilises, as $g\to\infty$, to a class $\tilde{Z}^+\in H^\bullet(M_\infty; A)$.  
The sewing axiom, applied to the multiplication map \eqref{monoidal}  
and corrected by the same power $\alpha^{-(g+h)}$ on both sides, implies 
that $\tilde{Z}^+$ is \textit{group-like}. It follows that 
\[
\tilde{Z}{}^+ = \exp\left\{\sum\nolimits_{j>0} a_j\kappa_j\right\}, 
\quad\text{ for certain } a_j\in A.
\]
We have used the superscript ``$+$'' to flag the lack of a $\kappa_0
$-contribution, present in the classes $\tilde{Z}$.

\begin{remark}
Integrally, $\tilde{Z}{}^+$ would be a group-like class in the 
$A$-valued cohomology of $\Omega^\infty\bC\bP_{-1}^\infty$. Additively, 
there exist additional primitive classes, the Dyer-Lashof descendants of 
the $\kappa$'s \cite{til}; quite likely, analogous group-like classes exist 
as well. The new classes could perhaps be ruled out by imposing the FTFT axioms 
at \textit{chain level}.  
\end{remark}

Clearly, ${}^1\tilde{Z}_g$ is the restriction to ${}_1M_g$ of $\alpha^g
\cdot\tilde{Z}^+$; let us find ${}^m\tilde{Z}_g^n$. Sewing on large genus 
surfaces to one boundary allows us to assume $g$ is large, without loss of 
information. Map now ${}^1\tilde{M}_g$ to ${}^m\tilde{M}_g^n$ by sewing on 
to the universal surface a fixed sphere with $n+1$ inputs and $m$ outputs. 
This sphere determines the map ${}_mS_{n+1}: A^{\otimes(n+1)} \to A^{\otimes m}$, 
multiplication to $A$ followed by the $m$th co-power $A\to A^{\otimes m}$. 
Thanks to Harer, the map ${}^1\tilde{M}_g \to {}^m\tilde{M}_g^n$  is a homology 
equivalence in a range of degrees, so we detect ${}^m\tilde{Z}{}_g^n$ by 
pulling back to  ${}^1\tilde{M}_g$, where we see the result of feeding 
${}^1 \tilde{Z}_g$ as one of the inputs in ${}_mS_{n+1}$. Thus, 
\[
{}^m\tilde{Z}_g^n = {}_mS_1(\alpha^g \cdot\tilde{Z}^+ \cdot {}_1S_n)
\]
and we conclude the desired classification, with freely chosen elements 
$a_j = \sum_ia_{ij}P_i$ of $A$:

\begin{proposition}[Fixed-boundary FTFTs]\label{paramformula}
If $m, n$ do not both vanish, then the matrix for ${}^m\tilde{Z}_g^n$ is diagonal 
in the tensor monomials of the normalised canonical basis. All entries are null, 
save for those relating $p_i^{\otimes n}$ to $p_i^{\otimes m}$; these have 
the form 
\[
\theta_i^{\chi/2}\cdot \exp\left
	\{\sum\nolimits_{j>0} a_{ij} \kappa_j\right\}, 
\]
for fixed complex numbers $a_{ij}$. Each $p_i^{\otimes 0}$ stands for $1\in \bC$, if 
$m$ or $n$ (but not both) vanish. 

Finally, every choice of numbers $\{a_{ij}\}$ gives rise to an FTFT by this rule, 
if, in addition, we define $\tilde{Z}_g$ defined by summing the above expression over
 $i$. \qed
\end{proposition}

\begin{remark}\label{nopoints}
The argument fails when $m=n=0$, and the axioms don't seem to determine 
$\tilde{Z}_g$ for closed surfaces, except in the stable range of homology,  
where we can detect it by lifting to $M_{g,1}$. 
\end{remark}

\subsection{Free boundaries and $E$.}\label{freebd}
Restricting to surfaces with fixed boundaries determines a $\tilde{Z}$ 
as above. Let now ${}^1 Z_{g, 1}$ denote the lift of ${}^1 Z_g^{1}$ to  
${}^1M_{g,1}$. Recall that the latter is a circle bundle over ${}^1M_g^1$ 
and classifies surfaces with a fixed incoming boundary and a free outgoing 
one. Sewing a fixed surface of genus $g''$ into the fixed incoming 
boundary of the general surface over ${}^1 M_{g',1}$ tells us that
\[
{}^1 Z_{g, 1} \in H^\bullet\left({}^1M_{g, 1}; 
	\End(A)\right)\quad\text{restricts to}\quad {}^1 Z_{g', 1} 
	\circ(\alpha^{g''}\cdot\,)  \in  H^\bullet\left({}^1 
	M_{g',1}; \End(A)\right),
\]  
with $(x\cdot\,)$ denoting the operator of multiplication by $x\in A$. 
Again, we get a stable class 
\begin{equation}\label{eztilde}
{}^1Z^+_1(\kappa, \psi_+) := {}^1 Z_{g, 1} \circ (\alpha^{-g} \cdot\,)
	\in H^\bullet\left({}^1 M_{g,1}; \End(A)\right) 
	\quad\text{as}\quad g\to\infty,
\end{equation}
minding that the cohomology ring is freely generated by the $\kappa_j$  
($j>0$) and the class $\psi_+$ at the outgoing point. Similarly, switching 
the roles of the boundary circles defines a stable class 
\begin{equation} \label{eztilde2}
{}_1Z^{+,1}(\kappa, \psi_-) := \lim_{g\to\infty}
			(\alpha^{-g} \cdot\,)\circ{}_1 Z_g^1 
\end{equation}
Setting the $\kappa$'s to $0$ in \eqref{eztilde} defines a formal 
Taylor series $E(-\psi):= {}^1Z^+_1(0, \psi)\in \End(A)\bbrak{\psi}$.  

\begin {lemma}\label{38}
We have 
\[
{}^1 Z_1^+(\kappa,\psi_+) = E(-\psi_+)\circ(\tilde{Z}^+(\kappa)\cdot\,)
\quad \text{and}\quad{}_1 Z^{+,1}(\kappa,\psi_-) = 
(\tilde{Z}^+(\kappa)\cdot\,)\circ{E}(\psi_-)^{-1}.
\]
More generally, in any genus $g$, 
\[
{}^1 Z_g^1 = E(-\psi_+)\circ(\alpha^g\tilde{Z}^+(\kappa)\cdot\,)
\circ E^{-1}(\psi_-).
\]
\end{lemma}


\begin{proof}
Modify the sewing above by letting \textit{both} surfaces vary, while 
the sewing circle rotates freely. This takes place over  
\[
\partial{N}= {}^1 {M}{}_{g',1} \underset{\bT}\times {}_1 {M}{}_{g''}^1 
\]
where the circle $\bT$ simultaneously rotates the two boundaries being 
sewn together. The sewn surface is classified by a map $\partial{N} \to 
{}^1{M}{}_{g}^1$. Pull-backs to $\partial{N}$ being understood, we have
\begin{equation}\label{movingcircle}
{}^1 Z_{g'}^1\circ {}^1 Z_{g''}^1 =  {}^1Z{}_g^1.
\end{equation}
In a moment, we will proceed by fixing the incoming or outgoing 
boundaries, as convenient. In any case, $\partial{N} \to {}^1 
{M}{}_{g'}^1 \times {}^1 {M}{}_{g''}^1$ is a circle bundle with 
Chern class $-(\psi'+\psi'')$, using the $\psi$-classes at the node. 
On the total space $\partial{N}$, $\psi'' = -\psi'$, the common 
value representing the Euler class of the sewing circle. The Leray 
sequence and our knowledge of stable cohomology show that $H^\bullet
(\partial{N})$, below degree $(g''-1)/2$, is freely generated over 
$H^\bullet({}^1M_{g'}^1)$ by the $\kappa_j''$. Similarly, it is freely 
generated over $H^\bullet({}^1M_{g''}^1)$ by the $\kappa_j'$, below degree 
$(g'-1)/2$. Let now both $g'$ and $g''$ be as large as needed, and lift 
\eqref{movingcircle} to ${}_1M_{g,1}$; we obtain from \eqref{eztilde} 
and \eqref{eztilde2}, after cancelling powers of $\alpha$:
\begin{equation}\label{addpsi}
{}_1 Z^{+,1}(\kappa',\psi') \circ {}^1 Z^+_1 (\kappa'',-\psi') 
	= \tilde{Z}{}^+(\kappa).
\end{equation}
Using the relation $\kappa = \kappa' + \kappa''$ and the algebraic 
independence of $\kappa',\kappa'',\psi'$, we obtain the second formula 
in the lemma by setting $\kappa''=0$, and from there, the first formula by 
setting $\kappa'=0$.

For the final and more general formula, return to \eqref{movingcircle} 
and let only $g''$ be large. Fixing the incoming circle leads to  
\[
{}^1 Z_{g'}^1 =  {}^1Z{}_{g,1} \circ \big({}^1 Z_{g'',1}\big)^{-1}
\]
with both factors on the right now known. Minding that $\psi'=-\psi''$ 
gives the formula.
\end{proof}

\begin{proposition}[Free boundary FTFTs]\label{officialZ}
For $(g,m,n)\neq(1,0,0)$, we obtain ${}^n Z_g^m$ as follows: each input is 
transformed by $E^{-1}(\psi)$, with the respective $\psi$ class; 
the product of these is multiplied by $\alpha^g\cdot\tilde{Z}^+(\kappa)$, 
the result is co-multiplied out to  $A^{\otimes n}$, where each factor is 
transformed by the respective $E(-\psi)$. The unit $\mathbf{1}$ substitutes 
for the empty input, and the Frobenius trace $\theta$ is applied if 
there is no output. 
\end{proposition}
\begin{proof}
If at least one marked point is present, we can repeat the final argument in 
the proof of the previous lemma: 
for each output or input point, compose with a large-genus ${}_{ 1} Z_G^1$ or 
${}^1 Z_{G, 1}$, respectively, to arrive at the known operator ${}_m Z_{>G, n}$. 
Since $^1Z_G^1$  is invertible and known, we are done. The case $m=n=0$ is 
handled as follows. Pull back $Z_g$ along the forgetful map $\varphi:M_g^1 
\to M_g$. The universal closed surface bundle splits, when lifted to $M_g^1$, 
into an open surface and a disk sewed along their common (moving) boundary, 
and we can compute $\varphi^*Z_g$ from the known formulae to get the desired
\[
\varphi^*Z_g = \theta\left(\alpha^g\cdot\tilde{Z}^+(\varphi^*\kappa)\right),
\]
having used the primitivity of $\kappa$-classes. (More precisely, the $\kappa
$-classes of the unpointed disk precisely undo the $\varphi^*\kappa$-correction 
of \S\ref{tautrefresh}; see the discussion of the vacuum below, if more help 
is needed.) Now, when $g\neq1$, the map $\varphi^*$ is split in rational cohomology 
by integration against the $\psi$-class down to $M_g$, so we recover $Z_g$ as hoped.
\end{proof} 

\subsection{The vacuum.} \label{morevac}
The universal disk with outgoing boundary 
defines the \emph{vacuum vector} $\mathbf{v}(z)\in A\bbrak{z}$, where we take 
$z$ to be the \emph{opposite} of the boundary Euler class (and of the 
$\psi_{out}$ at the output, in the pointed sphere model). Let $\varphi: 
{}^pM_g^{q+1}\to {}^pM_g^q$ be the map which forgets the first input; 
capping the first boundary in the universal surface with a disk shows that 
\begin{equation}\label{charvac}
 {}^pZ_g^{q+1} (\mathbf{v}(\psi_1),\dots) = \varphi^* ({}^pZ_g^q)(\dots).
\end{equation}
Fixing the disk shows that $\mathbf{v} \equiv \mathbf{1} \pmod{z}$. We say 
that that $Z$ has \emph{flat vacuum} if in fact $\mathbf{v} = \mathbf{1}$. 

\begin{proposition}\label{flatvac}
In the semi-simple free boundary FTFT build from data $\{E,Z\}$, the 
vacuum is given by 
\[
\mathbf{v}(z) = E(z)\left(\exp\{-\sum\nolimits_{j>0} a_jz^j\}\right),
\]
and $Z$ has a flat vacuum precisely when $\exp\left\{-\sum_{j>0} 
a_jz^j\right\} =  E(z)^{-1}(\mathbf{1})$.
\end{proposition}

\begin{proof}
This is the formula in Lemma~\ref{38} together with the equality 
$\kappa_j = -(-\psi_{out})^j$ on ${}^1M_0$. One way to see the latter is 
to use the correction formula in \S\ref{tautrefresh} for the pull-back 
to ${}^1M_0^1$, on which space all $\kappa$'s vanish, and the two 
$\psi$-classes are opposite.  
\end{proof}

\begin{remark}\label{mum} 
Unlike Harer stability and Looijenga's result on $\psi$-classes, 
the Mumford conjecture has not seriously been used: in the discussion so far, 
the $\kappa$'s could have been replaced by the primitives in the Hopf algebra 
$H^\bullet(M_{\infty,1})$. However, later on, unknown primitive classes would 
break the argument for reconstruction from genus $0$. 
\end{remark}

\subsection{Comment on stable surfaces}
Deligne-Mumford theories, which we aim to classify, can be restricted to 
families of smooth curves and they define free boundary FTFTs for \emph
{stable} surfaces only; so we must track the role of stability in the 
arguments of this section. The discussion applies with two exceptions: 
the construction of the vacuum, and the determination of $Z_g^0$ (in the proof 
of Proposition~\ref{officialZ}). In a general DMT, the vacuum (specified 
in an extra axiom) can be used to determine $Z_g^0$. In semi-simple 
theories, we can detect $\mathbf{v}$ --- and establish its existence --- by 
going to large-genus surfaces in the contraction formula \eqref{charvac}; 
invertibility of $\tilde{Z}{}^+$ allows us to replicate the conclusion of 
Proposition~\ref{flatvac}. This helps explain why there will be no 
distinction later between classification of semi-simple Lefschetz and 
DM theories.

\section{Lefschetz and DM theories: construction}\label{dmsec1}

Restricting a Lefschetz theory to families of smooth surfaces gives 
a free-boundary theory. In the semi-simple case, this is parametrised by 
\begin{enumerate}
\item the Frobenius algebra $A$,
\item the class $\tilde{Z}^+ = \exp\big\{\sum_{j>0} a_j\kappa_j\big\}$ of 
\S\ref{fixedbd}, 
\item the formal Taylor series $E(\provomega)= \Id + \provomega{E}_1 + \provomega^2
{E}_2 +\dots\in \End(A)\bbrak{\provomega}$ of \S\ref{freebd}. 
\end{enumerate} 
New information arises from the universal pairs of crossing disks, in the 
form of 
\begin{enumerate}
\setcounter{enumi}{3}
\item the ``nodal propagator" $\ol{Z}(\nodprop) = D(-\omega_+,\omega_-)$  
and the companion quadratic tensors $B,C$ of \S\ref{nodexplained}, all of 
which are formal series in the Euler classes of the two disks. 
\end{enumerate}
Ingredients (iii) and (iv) are subject to consistency constraints. 
In particular, we will see that $B,C$ and $D$ determine each other in any 
Lefschetz theory, whether or not $A$ is semi-simple. After spelling out 
the constraints on $B,C,D$ --- and the compatibility condition with $E$, in 
the semi-simple case --- we will construct a Lefschetz theory from those 
data. Restriction to stable curves gives a DMT. Unlike the proof of 
uniqueness in the next section, the construction does not require 
semi-simplicity of $A$.

We will switch henceforth from the Euler classes $\omega$ of boundary circles 
to the $\psi$-classes \emph{at the node}, and in doing so must mind the signs: 
$\omega=-\psi$ at the center of an outgoing disk, but $\omega=\psi$ for an 
incoming one. We use $z$'s to denote universal $\psi$ classes.

\subsection{Relating on $B, C$ and $D$.}\label{BCD}
The discussion in this sub-section applies to any Lefschetz theory, not 
necessarily semi-simple. The pairing 
\[
B: A\otimes A\to k\bbrak{\provomega_{1,2}}
\] 
defined by two disks with incoming boundaries and crossing at their centres 
must be \emph{symmetric} under simultaneous swap of the $A$ factors and of the 
nodal $\psi$-classes $\provomega_{1,2}$. The same symmetry holds for the co-form 
output by two crossing disks,
\[
C\in (A\otimes{A})\bbrak{\provomega_{1,2}}.
\] 
Each of these pairs of disks can be constructed from $\nodprop$ and from the 
left or right elbows $\Subset$, $\Supset$. To simplify notation in converting to 
algebra, we use the Frobenius pairing $\beta$ to express quadratic tensors 
as endomorphisms: define $B'$ by $\beta(a_1, B'(a_2)) = B(a_1 \otimes a_2)$, 
and similarly define $Z'(\Subset)\in \End(A)\bbrak{z}$ from $Z(\Subset)$, 
with $z$ standing for the Euler class of the second input circle. We then have  
\begin{equation}\label{bilform}
B'(\provomega_1,\provomega_2) = Z'(\Subset)(\provomega_1)
		\circ{D}(\provomega_1, \provomega_2);
\end{equation}   
similarly, defining the operator $C'$ by $\beta(a_1, C'(a_2)) = 
\beta^{\otimes2}(a_1\otimes a_2, C)$, and $Z'(\Supset)(z)$ by the same rule 
(with $z$ the Euler class of the second circle) leads to
\begin{equation}\label{coform}
C'(\provomega_1,\provomega_2) = D(\provomega_2,\provomega_1)\circ 
			Z'(\Supset)(\provomega_1), 
\end{equation} 
and these endomorphisms must satisfy $B'(\provomega_2,\provomega_1) = B'
(\provomega_1,\provomega_2)^*$ and  $C'(\provomega_2,\provomega_1) = C'
(\provomega_1,\provomega_2)^*$. 

We can eliminate the operators $Z'(\Subset)$ and $Z'(\Supset)$ from formulas 
\eqref{bilform} and \eqref{coform}:

\begin{proposition} \label{relBCD}
We have
\[
Z'(\Subset)(\provomega) = B'(\provomega,-\provomega) 
	 = D^*(0,\provomega)\circ{D}^{-1}(\provomega,0) 
	 = C'(-\provomega, \provomega)^{-1} = Z'(\Supset)(-z)^{-1}.
\] 
\end{proposition}
\begin{proof}
The first identity arises by setting $\provomega = \provomega_2 = -\provomega_1$ 
in \eqref{bilform}: this results  in the specialisation $D(z,-z) =\Id$, 
as in \S\ref{classif}.iii. For the second, set one of the 
arguments to $0$ and the other to $\provomega$ in \eqref{bilform} to get 
\[
B'(0,\provomega)= D(0, \provomega), \quad B'(\provomega,0) = 
	Z'(\Subset)(\provomega)\circ D(0,\provomega),
\]
and now use the symmetry of $B$. The last two identities are reached by 
the same route, but using equation \eqref{coform}. Incidentally, equality of 
the outer terms in \eqref{relBCD} is the equivariant form of Zorro's lemma.
\end{proof} 

Equations \eqref{bilform}, \eqref{coform} and \eqref{relBCD} allow us to express 
$B,C$ and $D$ in terms of each other. In particular, note the equivalent \emph
{Cohomological Field theory constraints} $B'=\Id \Longleftrightarrow C'=\Id 
\Longleftrightarrow D=\Id$. 

\subsection{Deligne-Mumford data and constraints.}\label{nodalrelations}
In a DMT, the data $B,C$ and $D$ are supplied in the nodal factorisation axioms, 
controlling the behaviour of $\ol{Z}$-classes at the boundary of $\ol{M}{}_g^n$. 
The arguments of \S\ref{BCD} are now disallowed, because the elbows  $\Subset, 
\Supset$ are unstable surfaces. Instead, the relations between $B,C,D$ are 
imposed as consistency constraints on the data. (In the process of interpreting 
the formulas form the previous section, we \emph{define} $Z(\Subset), Z(\Supset)$ 
from $B$ and $C$ by Proposition~\ref{relBCD}.) Equivalent constraints can 
be formulated for each datum separately, as follows: 
\begin{enumerate}
\item symmetry of $B$; 
\item symmetry of $C$; 
\item the identity $D(z,-z) =\Id$, together with an awkward 
adjointness condition on $D$.
\end{enumerate}
(We shall not use this adjointness condition on $D$, and leave it to the 
reader to spell it out.) With these constraints, the list of axioms of a DMT 
is finally complete!

Let us note, on the side, that it is unnecessary to impose the constraints 
in semi-simple theories, as they can be inferred from the other axioms by a 
different method. Namely, one considers the universal curve with a single node 
and two components of large genus, each carrying a marked point, and computes 
its $\ol{Z}$-class in the three possible nodal factorisations, using  $B,C$ 
and $D$. This suffices to detect the identities of \S\ref{BCD} for the usual 
reasons: the surface operators are invertible, and the nodal $\psi$-classes 
are free algebra generators in the stable range. (The details of the argument 
closely parallel the proof of Lemma~\ref{38}, and are left to the reader.)

\subsection{Constraint on $E$, and the CohFT condition.}\label{gromwitconds}
Thanks to Proposition~\ref{officialZ}, in a semi-simple theory we have  
\[
Z'(\Subset)(z) =  E^{-1}(-\provomega)^*\circ{E}^{-1}(\provomega),
\]
whence equation \eqref{relBCD} gives the compatibility constraint between $E$ 
and each datum $B,C$ and $D$. For example, fixing a symmetric $B$ subjects 
$E$ to the constraint
\begin{equation}\label{symofB}
B'(\provomega,-\provomega) = 	E^{-1}(-\provomega)^* E^{-1}(\provomega),
\end{equation}
and determines $E$ up to left multiplication by any $\End(A)$-valued 
Taylor series $F(\provomega)=\Id + O(\provomega)$ which preserves 
the symplectic form on $A\ppar{\provomega}$
\[
\Omega_B(a_1,a_2) = \operatorname{Res}_{\provomega=0} B(-\provomega,\provomega)
	\left(a_1(-\provomega), a_2(\provomega)\right)d\provomega. 
\]

In particular, in a CohFT with $B=\Id$, \eqref{symofB} becomes the standard
\textit{symplectic condition} 
\[
E^*(\provomega) = E^{-1}(-\provomega),
\]
which says that  $E(\provomega)$ preserves the symplectic form
\[
\Omega(a_1,a_2) := \operatorname{Res}_{\provomega=0} 
	\beta\left(a_1(-\provomega)a_2(\provomega)\right)d\provomega.
\]
The constraint on $E$ must applies in any semi-simple DMT: we can detect 
the requisite identity in the tubular neighbourhood of a 
nodal stratum in large genus. 

\subsection{Alternative parameters for semi-simple theories.} 
The following alternative description will be useful in \S\ref{quadhamilt}.
Since $D(\provomega,-\provomega)\equiv\Id$, we can write 
\begin{equation}\label{altB}
C'(\provomega_1,\provomega_2) = E\left(\provomega_1)\circ(\Id + 
	(\provomega_1+\provomega_2)W'(\provomega_1,\provomega_2)\right)
	\circ{E}^*(\provomega_2)
\end{equation}
for a uniquely determined $W'$ satisfying the straight-forward 
symmetry constraint 
\[
W'(\provomega_1,\provomega_2)^*= W'(\provomega_2, \provomega_1),
\]
corresponding to a symmetric $W\in (A\otimes{A})\bbrak{\provomega_{1,2}}$.
The triple $(\tilde{Z}{}^+, W, E)$ will be an alternative set of parameters 
for a semi-simple DMT or Lefschetz theory, with symmetry of $W$ as the 
only constraint. For example, in these parameters, the CohFT condition becomes 
\[
 W'(\provomega_1,\provomega_2) = \frac{E^{-1}(\provomega_1)E^{-1}(\provomega_2)^*
 	-\Id}{\provomega_1+\provomega_2}, 
\]
which can be met precisely for symplectic $E$.

Finally, we give the promised construction of a Lefschetz theory with compatible 
data (i)--(iv).
   
\begin{proposition}\label{lefschetzconst} 
Given any Frobenius algebra $A$ and data $\tilde{Z}^+$, $E$ and $B$,   
subject to the constraint~\eqref{symofB}, there exists a Lefschetz theory 
with nodal bilinear form $B$, and which on smooth surface families is given 
by Proposition~\ref{officialZ}.
\end{proposition}
\begin{proof}
Here is a recipe to produce a field theory; for definiteness, we write it 
on $\ol{M}{}_g^n$ but $\ol{A}{}_g^n$ would work as well. For a single surface 
$\Sigma$, the smooth-surface and nodal factorisation rules leave no choice: 
resolve the surface, viewing all nodal points as outgoing say, then 
apply the free boundary formula to each component, and finally use 
$B(\psi',\psi'')$ to contract the two factors of $A$ at each node (formula 
\ref{Bcont}). Clearly, this recipe works in any family which does not vary 
the topological type of the surface, and in particular over any stratum of 
$\ol{M}{}_g^n$. However, patching these classes together when attaching 
the strata requires more comment.

For any boundary stratum, the recipe just given can also be applied to nearby 
smoothings of our nodal surface $\Sigma$. These smoothings have a distinguished 
handle which degenerates to the node; we can cut this handle and use, in contracting 
with $B$, the Euler class of the cutting circle, with the two choices of sign, 
in lieu of the nodal $\psi$-classes. Let us call this the \emph{nodal recipe}. 
The nodal recipe is unavailable as we move farther into the bulk of Deligne-Mumford 
space, where the handle is lost; the \emph{smooth recipe}, based 
on the true topology of the surface, must 
take over. Constraint \eqref{symofB} ensures the agreement of the 
smooth and nodal recipes, at the level of cohomology, in the region where 
both can be used. However, to produce a well-defined cohomology class on 
$\ol{M}{}_g^n$, we must exhibit \emph{cocycle-level representatives}, 
such as differential forms, for the local $\ol{Z}$-classes, and check 
their agreement on overlaps.  (Choose the overlaps to be (poly-) 
annular neighbourhoods of the Deligne-Mumford strata.) 

For this purpose, we choose differential forms $\tilde\psi$ representing the 
$\psi$'s over 
$\ol{M}{}_g^{n+1}$, such that:
\begin{enumerate}
\item $\tilde\psi_{n+1}$ vanishes near the sections $[\sigma_i]$ and near the 
nodes of the universal curve
\item The closed forms $\tilde\psi', \tilde\psi''$ at a node are also defined 
on a tubular neighbourhood of the locus of nodal curves, and $\tilde\psi'= 
-\tilde\psi''$ in an annular neighbourhood.
\end{enumerate}
This is possible because the line bundle $\det \sigma_{n+1}^* T^*_\varphi$ 
is trivial near the $[\sigma_i]$ and flat near the nodes, so its curvature 
forms in any metric which is constant near $[\sigma_i]$ and near the nodes 
will work.  

Apply now the nodal recipe for $\ol{Z}$ with differential forms, using 
$\int_\varphi \tilde\psi_{n+1}^{j+1}$ for each occurrence of $\kappa_j$ in 
the cohomological formula. Vanishing of $\tilde\psi_{n+1}$ near the nodes 
allows us to omit nodal neighbourhoods of the surface when computing the 
integral, and gives a well-defined differential form expression for $Z$ 
of the cut surface $\Sigma^{cut}$ with values in $A\otimes A$, over a small 
neighbourhood of the boundary of $\ol{M}{}_g^n$. We can then contract with 
$B(\tilde\psi',\tilde\psi'')$. In other words, we can continue to use the 
nodal recipe in a neighbourhood of any given boundary stratum, using $\Sigma^{cut}$  
and the forms $\tilde\psi', \tilde\psi''$ as substitutes for the boundary Euler 
classes. Moving now a little further away, into the annular neighbourhood where 
$\tilde\psi'= -\tilde\psi''$, constraint~\ref{symofB} shows that contraction with $B$ simply 
has the effect of cancelling the output $E(\tilde\psi)$-twists in the formula 
for $Z(\Sigma^{cut})$. As a result, the nodal and smooth recipes agree 
at the level of forms. This gives the desired patching.  
\end{proof}

\begin{remark}
Another construction of the classes $\ol{Z}$ will be given in \S\ref
{quadhamilt}, in terms of a  group action on cohomology of the 
Deligne-Mumford spaces.
\end{remark}

\subsection{The vacuum in Lefshetz theories.} \label{nodalvac}
Existence of a vacuum (\S\ref{vacax}) follows from the Lefschetz theory 
sewing rule. In the 
theory of Proposition~\ref{lefschetzconst}, $\mathbf{v}(z)$ is given by 
the formula of Proposition~\ref{flatvac}, and the \emph{flat vacuum} 
condition $\mathbf{v}(z)= \mathbf{1}$  amounts to
\begin{equation}\label{zfrome}
\exp\left\{-\sum\nolimits_{j>0} a_j\provomega^j\right\} = E^{-1}(\provomega)
(\mathbf{1}).
\end{equation}
In the semi-simple case, large genus surfaces detect the vacuum, so the 
restricted Deligne-Mumford theory will also have a flat 
vacuum precisely when \eqref{zfrome} holds.

\section {Deligne-Mumford theories: uniqueness}
\label{dmsec2}

This section contains the key argument of the paper: we show that semi-simple 
DMT's are uniquely determined by the nodal propagator $D$ and by the associated 
free-boundary theory on smooth curves.\footnote{In the context of chain-level 
theories, this fact is true without the semi-simplicity assumption; but in 
that situation, it can be made obvious with the right definitions.} 
The argument also applies to Lefschetz theories, but we focus on the DM case. 
A reformulation of the main result, suggested by one of the referees, 
is found in the appendix to this section. 

\subsection{Extending $Z$-classes over Deligne-Mumford strata.} 
\label{41}
Let $j: S\hookrightarrow M$ be the divisor parametrising a (locally versal) 
nodal degeneration of a family $\Sigma_M\to M$ of marked Riemann 
surfaces. The normal bundle $\nu_S$ to $S$ in $M$ is the tensor 
product $L'\otimes L''$ of the complex tangent lines at the two 
exceptional points $p',p''$ of the normalised surface $\tilde \Sigma$ 
over $S$; as to its Euler class, $\eul(\nu_S) = -(\psi'+\psi'')$. 

Since $p'$ and $p''$ may be switched by the monodromy over $S$, we 
view them both as outgoing. Over $S$, and hence over a tubular 
neighbourhood $N$, $Z(\Sigma)$ is the contraction of $Z(\tilde\Sigma)
\in H^\bullet(\partial{N}; A^{(2)})$ by $B(\psi',\psi'')$. The 
Mayer-Vietoris sequence
\[
\cdots\to H^{\bullet-1}(\partial{N}) \xrightarrow{\:\delta\:} 
	H^\bullet(M) \to H^\bullet(M\setminus N)\oplus H^\bullet(N)\to 
			H^\bullet(\partial{N})\xrightarrow{\:\delta\:}\cdots,
\]
shows that cohomology classes over $M\setminus S$ and $N$ patch into 
one over $M$, if they agree over the circular neighbourhood 
$\partial N$; but an ambiguity arises from the $\delta$-image of 
$H^{\bullet-1}(\partial{N})$. More precisely, if $\eta$ is a 
connection form on the circle bundle $\partial{N}\to S$, then 
$H^\bullet(\partial{N})$ is computed as the cohomology of the DGA 
$H^\bullet(S)[\eta]$, with differential $d\eta = \eul(\nu_S)$. 
For $a\in H^{\bullet-1}(\partial{N})$, $\delta(a)$ is given by the 
differential of any extension of $a$ to $N$ as a co-chain. This kills 
classes pulled back from $S$, while a class $b\eta$, with $b$ from $S$, 
is sent to $j_*(b)$. Now, $b\eta$ is a co-cycle iff $b\cdot\eul(\nu_S)=0$, 
so the patching ambiguity is precisely the Thom push-forward $j_*$ of 
the annihilator of $\eul(\nu_S)$ in $H^{\bullet-2}(S)$.

This observation applies to Deligne-Mumford strata $S$ of any 
co-dimension $c$: a class in $H^\bullet(M)$ with known restrictions 
to $M\setminus{S}$ and $S$ is ambiguous only up to addition of some 
$j_*(b)$, with $b\in H^{\bullet-2c}(S)$ annihilated by $\eul(\nu_S)$.
We see this from the long exact cohomology sequence
\[
\dots\to H^{\bullet-2c}(S) \xrightarrow{j_*} H^\bullet(M) \to 
	H^\bullet(M\setminus{S}) \xrightarrow{\delta} H^{\bullet-2c+1}(S) 
	\to\dots,
\] 
(where we have used the Thom isomorphism $j_*: H^{\bullet-2c}(S)\cong H^\bullet
(M,M\setminus{S})$) and from the fact that $j_*(b)|_S = \eul(\nu_S)\cdot {b}$. 
Note that $\eul(\nu_S)$ is the product of  Euler factors for the 
Deligne-Mumford divisors containing $S$.

\subsection{Uniqueness for large genus: the main idea.} \label{euler}
If $M\setminus{S}$ is the universal family of smooth surfaces of large 
genus and $S$ a boundary divisor in its DM compactification, Looijenga's 
theorem \eqref{lofree} ensures that $\eul(\nu_S) = -\psi' - \psi''$ is 
not a zero-divisor within a range of degrees, as one component of 
$\tilde\Sigma$ must have large genus. Classes then patch uniquely. 
This applies to strata of any co-dimension, and even if the family 
$M$ includes nodal and reducible surfaces, the only requirement 
being that each node defining the degeneration to $S$ should belong 
to at least \textit{one} large genus component. This is the germ 
of an inductive proof of unique extension of $Z(\Sigma_M)$ to the 
Deligne-Mumford boundary. The induction requires a careful 
stratification of the Deligne-Mumford spaces $\ol{M}{}^n_g$. 

\subsection{Stratification of $\ol{M}{}^n_g$.}\label{stratif}
Assume that $n>0$, and call the irreducible 
component of the universal curve containing the marked point $n$ 
\textit{special}. We now decompose $\ol{M}{}^n_g$ following the 
\textit{topological type $\tau$} of the special component. A partial 
ordering on the resulting strata is defined by stipulating that higher special
types can only degenerate to lower ones (plus extra components, which 
cease to be special). We extend this to some complete ordering; an example 
is the dictionary order on geometric genus, number of nodes and total number 
of marked points 
of the special component. (Nodes linking the special component to other 
components should be counted for this purpose as marked points, not nodes.) 
The smooth stratum $M^n_g$ is by itself. Every stratum in the decomposition 
is isomorphic to $(M_\gamma^\nu\times \ol{M})/F$, where $\gamma$ and 
$\nu$ pertain to the special component, while $\ol{M}$ parametrises the 
complementary components, and $F$ is the group of symmetries of the 
modular graph describing the topological type our curves. 

\begin{example}
With $g>2$ and $n=1$, if we split off an elliptic curve crossing 
the special component at two nodes, $\gamma = g-2, \nu = 2$, $\ol{M} 
= \ol{M}_1^2$ and $F = \bZ/2$, switching the two nodes. 
\end{example}

Our decomposition $M_\tau$ of $\ol{M}^n_g$ is not a stratification 
in the strict sense: it is not compatible with the dimensional 
ordering. However, we have the following:
\begin{enumerate}
\item Each $M_\tau$ is a union of Deligne-Mumford strata.
\item Every descending union $\amalg_{\tau'\ge \tau} M_{\tau'}$ of 
strata is open. 
\item Each $M_\tau$ is a closed sub-orbifold of $\amalg_{\tau'\ge \tau} M_{\tau'}$.
\item The normal bundle to $M_\tau$ is (locally) a sum of lines 
$L'\otimes L''$ for tangent line pairs at the nodes which belong 
to the special component (and possibly one other component).
\end{enumerate}
Parts (i) and (ii) are clear by construction. To see (iv), choose 
a surface $\Sigma$ in $M_\tau$. It belongs to a DM stratum $M_\Sigma$, 
which is wholly contained within $M_\tau$. The deformation space of 
$\Sigma$ is smooth, and its tangent space is the sum of the lines 
$L'\otimes L''$, over \textit{all} nodes, with the tangent space to 
$M_\Sigma$. The nodes which lie on the special component give deformations 
changing the topology of the special component, hence they represent  
normal directions to $M_\tau$; whereas the other nodes correspond to 
deformations of the complement of the special component, which are 
tangent to $M_\tau$. An automorphism of $\Sigma$ preserves 
the special component, and cannot interchange tangent and normal lines. 
This shows that the symmetry group $F$, acting on the tubular 
neighbourhood of $M_\tau$, preserves the decomposition into tangent 
and normal directions; so $M_\tau$ has no self-intersections, 
proving smoothness in (iii).

\subsection{Unique patching.}
Let us now prove uniqueness of the patched class on every $\ol{M}{}_g^n$ 
($n>0$). Attach to the marked input point $n$ a moving smooth surface 
$\Sigma_G$ of large genus $G$ with an incoming point marked $``-''$ and an 
outgoing one marked $``+''$ (the latter attached to $n$). This embeds 
$S:= \ol{M}{}^n_g\times {}^1 M_G^1$ as part of the boundary of $\ol{M}{}_
{g+G}^n$.  Let, as before, $N$ be a tubular neighbourhood of $S$ and 
$\partial{N}$ its boundary. 

\begin{lemma}\label{46}
The projection $\partial{N} \to \ol{M}{}^n_g\times {M}_G^1$ forgetting 
the point $+$ gives an isomorphism in degree less than 
$(G-1)/2$:
\[
H^\bullet(\partial{N}) \cong  H^\bullet(\ol{M}{}_g^n) 
\otimes H^\bullet(M_G)[\psi_-].
\] 
\end{lemma}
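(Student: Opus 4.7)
The plan is to realize $\partial N$ as the unit circle bundle of the normal line $\nu_S = L_+ \otimes L_n$ to $S$ in $\ol{M}{}_{g+G}^n$ (with $L_+$, $L_n$ the tangent lines at the two sides of the node), and then to combine the Gysin sequence --- equivalently the DGA model of \S\ref{41} --- with Harer-Looijenga stability on the large-genus factor. Treating both nodal points as outgoing (as in \S\ref{41}), the relevant Euler class is
\[
\eul(\nu_S) \;=\; -\psi_+ - \psi_n,
\]
where $\psi_+$ is pulled from the ${}^1M_G^1 = M_G^2$-factor and $\psi_n$ from the $\ol{M}{}_g^n$-factor of $S$.

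Next I would compute $H^\bullet(S)$ in the stable range. K\"unneth, combined with Corollary~\ref{lofree} on the large-genus factor, gives, for $\bullet < G/2$,
\[
H^\bullet(S) \;\cong\; H^\bullet(\ol{M}{}_g^n)\otimes H^\bullet(M_G)[\psi_+,\psi_-].
\]
In the DGA model $H^\bullet(S)[\eta]$ with $d\eta = -(\psi_+ + \psi_n)$, the cohomology breaks up as a cokernel and a kernel of multiplication by $\psi_+ + \psi_n$. Since $\psi_+$ appears as a free polynomial generator over a subring containing $\psi_n$ as an independent element, the class $\psi_+ + \psi_n$ is monic of degree one in $\psi_+$, hence not a zero-divisor. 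Consequently the $\eta$-part of the DGA cohomology vanishes and
\[
H^\bullet(\partial N) \;\cong\; H^\bullet(S)\big/(\psi_+ + \psi_n)
\]
throughout the same range.

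Finally, imposing the quotient relation $\psi_+ = -\psi_n$ eliminates $\psi_+$, producing
\[
H^\bullet(\partial N) \;\cong\; H^\bullet(\ol{M}{}_g^n)\otimes H^\bullet(M_G)[\psi_-],
\]
which by a second application of K\"unneth and Corollary~\ref{lofree} is $H^\bullet(\ol{M}{}_g^n \times M_G^1)$. The generators $\psi_-$, the $\kappa$'s on $M_G$, and the classes of $\ol{M}{}_g^n$ visibly pull back along the projection forgetting $+$, so the identification is the one induced by that projection. The main obstacle is the injectivity of multiplication by $\psi_+ + \psi_n$ in the stable range; it hinges crucially on Looijenga's theorem, which is precisely what makes $\psi_+$ a free polynomial generator and so prevents any unexpected kernel.
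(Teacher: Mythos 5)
Your proposal is correct and takes essentially the same route as the paper: both realize $\partial N$ as the circle bundle over $S$ with Euler class $\pm(\psi_+ + \psi_n)$, compute $H^\bullet(S)$ in the stable range via K\"unneth and Looijenga's freeness, and conclude from the DGA model $H^\bullet(S)[\eta]$ that the $\eta$-part dies because $\psi_+ + \psi_n$ is a non-zero-divisor. Your write-up merely makes explicit the injectivity step that the paper leaves as ``which implies our statement.''
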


\begin{proof}
The description of $\partial{N}$ as a circle bundle over $S$ gives the 
description of $H^\bullet(\partial{N})$ in the stable range as the cohomology 
of the differential graded algebra
\[
 H^\bullet(\ol{M}{}_g^n) \otimes H^\bullet(M_G)[\psi_+,\psi_-,\eta] 
 \quad \text{with}\quad d\eta = \psi_n+\psi_+, 
\]
which implies our statement.
\end{proof}

Now, $S$ parametrises nodal degenerations at $n=+$ of those surfaces
corresponding to the open union of $U$ of DM strata in $\ol{M}{}_{g+G}^n$ 
which meet $\partial{N}$. We carry over our type decomposition of \S\ref
{stratif}  to $U\subset \ol{M}{}_{g+G}^n$ with special point $-$, 
and observe that properties 
(i)--(iv)  continue to hold. In addition, the special component now has 
geometric genus $G$ or higher. All the normal Euler classes in (iv) are 
then products of free generators of the cohomology ring.
The classes $Z$ over the $U_\tau$ then patch uniquely. But each $U_\tau$ 
factors as $M_{G+\gamma}^\nu\times \ol{M}$, and $\ol{M}$ parametrises 
surfaces whose type is \textit{strictly lower} than that of geometric 
genus $g$, with $n$ marked points. We can inductively assume their 
$Z$-classes to be known; the factorisation rule gives the $Z$-class 
on each $U_\tau$, therefore on all of $U$ and then also on $S$. The 
class on $S$ is $\ol{Z}{}_g^n\circ{D}(-\psi_n,\psi_+)\circ{}^1Z_G^1$, 
with $D$ fed into the $n$th entry of $\ol{Z}{}^n$. Lifting to $\partial
{N}$ recovers $\ol{Z}{}_g^n$, by Lemma~\ref{46}.

\subsection{Pre-stable surfaces.} Restriction to stable surfaces may 
seem unnatural from the axiomatic point of view. There are Artin stacks 
$\ol{A}{}_g^n$ parametrising all  \textit{pre-stable} curves, nodal 
curves with no condition on the rational components: they arise from 
stable curves by inserting chains of $\bP^1$'s at a node (leading to 
semi-stable curves) and trees of $\bP^1$'s at smooth points. However, 
these stacks also have normal-crossing stratifications \`a la 
Deligne-Mumford, and the inductive argument applies as before, 
ensuring uniqueness of the extension to $\ol{A}{}_g^n$.

\subsection{Appendix: An infinite-genus Deligne-Mumford space.}
One referee observed that the splitting result of this section 
has a re-formulation in the guise of a homological splitting of a certain 
``infinite-genus Deligne-Mumford space'' $\ol{M}{}_{n\cdot\infty}^n$ into 
its constituent strata. This space is a partial completion of the classifying 
space $B\Gamma_\infty^n$ of the infinite-genus mapping class group, and can be 
obtained by the addition of certain boundary strata. Roughly speaking, 
$\ol{M}{}_{n\cdot\infty}^n$ parametrizes infinite-genus nodal surfaces with 
$n$ marked points such that each irreducible component which carries a marked 
point has infinite genus, but the other components have \emph{finite}  genus. 

A geometric construction of the requisite DM space, as well as its moduli 
interpretation, require some effort; so I shall only outline the story. 
While it is true that we need the spaces only up to homotopy in order to know 
their cohomology, we need to describe $\ol{M}{}_{n\cdot\infty}^n$ as a \emph
{stratified} homotopy type, with normal structure to the strata. In this format, 
the space can be assembled from its constituent strata, which are products of 
various $M_g^k$ and factors 
of $B\Gamma_\infty^l$, in the manner in which $\ol{M}{}_g^n$ is assembled 
from its Deligne-Mumford strata, and with the same normal-crossing 
structure. Readers familiar with the structure of Deligne-Mumford boundary 
divisors should have no trouble supplying the details for this case.

A point in $\ol{M}{}_{n\cdot\infty}^n$ represents a nodal curve $C$; to this, 
we associate its stable graph $\tilde{\gamma}(C)$ in the usual way (a genus-labeled 
vertex for each component, an edge for each node, a labeled external edge for 
each marked point), and the modified graph $\gamma(C)$ which collapses all the
edges which link vertices of finite genus. We now stratify $\ol{M}{}_{n\cdot\infty}^n$ 
according to the modified graph. (For this purpose, one must take care that the 
`infinite' genera of components of the curve are really very large numbers, 
to be stabilized later;  for instance, splitting off some finite genus piece 
from a large genus surface changes the graph. This book-keeping must be 
built into the construction of $\ol{M}{}_{n\cdot\infty}^n$.) For a single marked 
point, we recover the stratification of \S\ref{stratif} by topological type of 
the special component (now stabilised to infinite genus). Call $c_\gamma$ the 
complex co-dimension of $M_\gamma$.

There is a partial ordering on strata, compatible with degeneration 
of the infinite-genus components: $\gamma \ge \gamma'$ if 
the closure of the stratum  $M_\gamma$ contains $M_{\gamma'}$. (This happens 
as soon as the former meets the latter.) This gives an 
increasing filtration of $\ol{M}{}_{n\cdot\infty}^n$ by the open subsets 
$F_\gamma:= \coprod_{\gamma'\ge\gamma} M_{\gamma'}$. The following proposition, 
suggested by the referee, has the same proof as Lemma~\ref{46}.

\begin{proposition}
\begin{trivlist}\itemsep0ex
\item (i)
The cohomology spectral sequence associated to the filtration $F_\gamma$ collapses 
at the first page: 
\[
\mathrm{gr}H^\bullet(\ol{M}{}_{n\cdot\infty}^n) = \bigoplus H^\bullet(M_\gamma)[2c_\gamma].
\]
\item (ii) Every cohomology class of $\ol{M}{}_{n\cdot\infty}^n$ is uniquely determined 
by its restrictions to all the strata $M_\gamma$. \qed
\end{trivlist}
\end{proposition}

\section{A group action on DM field theories}\label{quadhamilt}
This section reformulates the classification of semi-simple DMT's 
in terms of the action of a subgroup of the symplectic group on the 
cohomology of Deligne-Mumford spaces. This construction, which lifts 
some of Givental's quadratic Hamiltonians, may have been first flagged 
by Kontsevich \cite{cks} (see also the recent \cite{kkp}), and plays 
a substantial role in his study of deformations of open-closed field 
theories. Here, it is merely a convenient way to rephrase my classification, 
but it does provide the link with Givental's original conjecture, which 
was formulated in terms of CohFT potentials.
The context is more general than in the Introduction: we allow $D\neq 
\Id$, and this requires us to review the notation.

\subsection{Definitions.} 
Let $\Delta$ be the completed second symmetric power of $A\bbrak{\provomega}$; 
we may view it as the space of (symmetric) $2$-variable Taylor series in 
$A^{\otimes 2}\bbrak{\provomega_{1,2}}$. The group $\GL(A)\bbrak{\provomega}$ 
acts on $V\in\Delta$ point-wise, 
\[
\mathrm{Ad}_g(V)(z_{1,2}) := (g(\provomega_1)\otimes g(\provomega_2))\circ{V}
(\provomega_{1,2}). 
\]
Let $\GL^+\subset \GL(A)\bbrak{\provomega}$ be the congruence subgroup $\equiv \Id\pmod
{\provomega}$, and define $\BSp^+:= \GL^+\ltimes \exp(\Delta)$, the second 
factor denoting the vector Lie group with Lie algebra $\Delta$. Call 
$\BF$ the space of polynomial functions on $A\bbrak{\provomega}$, introduce 
a formal parameter $\hbar$ and consider, on the space $\BF\ppar{\hbar}$
\begin{itemize}
 \item the \emph{translation action} of $A\bbrak{\provomega}$: $(T_x\cF)(y) = \cF(y-x)$;
 \item the \emph{geometric action} of $\GL^+$: $( g\cF)(x) = \cF(g^{-1}x)$;
 \item the action of $\exp(\Delta)$, exponentiating 
 the quadratic-differentiation action of $\hbar\Delta$.
\end{itemize}
Together, these assemble to an action of $\BSp^+\ltimes{A}\bbrak{\provomega}$. 
When $A\bbrak{\provomega}$ is doubled to a symplectic vector space, $\BF$ can 
be regarded as the Fock representation of its Heisenberg group $\BH$ constructed 
therefrom, $\BSp^+$ is a subgroup of the symplectic group $\BSp$, acting on $\BH$, 
and $\Delta$ is the ``upper right corner" of the Lie algebra of $\BSp$. The 
(projective) metaplectic representation of $\BSp$ on $\BF$ induces on $\BF\ppar{\hbar}$ 
the action of $\BSp^+$ that we have just described, except that we have chosen 
to rescale $\Delta$ by~$\hbar$. To be precise, only the Lie algebra of $\BSp$ 
acts on polynomial functions on $A\bbrak{z}$; integrating the action to the group 
$\BSp$ requires one to complete $\BF$ in some way. Nonetheless, $\Delta
$-differentiation does exponentiate on $\BF\ppar{\hbar}$, and our $\hbar$ scaling 
will match the action we need on DMT potentials. 

Note that we have \emph{not} committed to an identification of the symplectic 
space $A\bbrak{\provomega}\oplus A\bbrak{\provomega}^*$ with $(A\ppar{\provomega},
\Omega)$ as in \S\ref{loopgroup}. Most importantly, the geometric action of 
(symplectic group elements) $g\in \GL^+$ does \emph{not} agree with the metaplectic 
one, induced from its point-wise action on $A\ppar{\provomega}$ in \S\ref{loopgroup}: 
rather, the latter comes from a \emph{different} embedding of (the symplectic part 
of)~$\GL^+$ in~$\BSp^+$; see Proposition~\ref{2emb} below. 

\subsection{Action on DMT's.}
A Deligne-Mumford theory defines a vector in the 
space of $\frS_n$-invariant cohomologies
\[
A^{DM}:= \prod_{g,n} H^\bullet\left(\ol{M}{}_g^n; (A^*)^{\otimes n}
\right){}^{\frS_n}.
\]
To any $\ol{Z} \in A^{DM}$, not necessarily one coming from a DMT, we 
assign as in \S\ref{potential} its \emph{potential}
\begin{equation}
\cA(x) = \exp\left\{\sum_{g,n} \frac{\hbar^{g-1}}{n!}
	\int\nolimits_{\ol{M}{}_g^n} \ol{Z}{}_g^n\left(x(\psi_1), \dots,
	x(\psi_n)\right)\right\},
\end{equation}
living in a completion of $\BF\ppar{\hbar}$. It converges as a formal power series 
in $\hbar, x$ and $x^3/\hbar$, but is in fact of a very restricted kind, thanks to 
the dimensions $3g+n-3$ of the spaces $\ol{M}{}_g^n$. Thus, the exponent is a formal 
series in $\{x,\hbar, x^3/\hbar\}$ for $x\in A\oplus Az$, whose coefficients are 
polynomials in the $\provomega^2 A\bbrak{\provomega}$ variables. This shows that 
the differentiation by $z^2A\bbrak{z}$ and of $\hbar\Delta$ can be exponentiated
to a linear enlargement of $\BF\ppar{\hbar}$ which contains the potentials. 

Let $\BH^+$ and $\BH^{++}$ be the natural lifts of $zA\bbrak{z}, z^2A\bbrak{z}$ 
in $\BH$. I will define an action of $\BSp^+\ltimes \BH^{++}$ on $A^{DM}$ which 
lifts the action on potentials. Now, a distinguished point $I_A\in A^{DM}$ represents 
the trivial theory based on $A$; its $(g,n)$-component is the $n$th co-power 
of $\alpha^g$ (interpreted as the Frobenius trace, if $n=0$). We will 
verify the following DMT version of Theorem~2: the semi-simple DM theories 
constitute the $\BSp^+\ltimes \BH^{++}$-orbit of $I_A$. Specializing to 
Cohomological Field theories will lead to the original version of 
Theorem~2.

As we will see, this lifted action extends infinitesimally to the larger 
group $\BSp^+\ltimes\BH^+$, but the exponentiated action of the linear modes 
$\provomega{A}\subset\BH^+$ has singularities. We will compute the action 
explicitly in the case of semi-simple DMT's, and will see that these linear 
modes vary the algebra structure of $A$, re-scaling the projectors. On the 
other hand, a similarly-defined translation by zero-modes is more complicated, 
and does \emph{not} commute with the rest of $\BH^+$; see \S\ref{reconst}.

\subsection{Translation.}\label{trans}
Let $\ol{Z} \in A^{DM}$ be any class. For $a(\provomega)\in \provomega 
\cdot{A} \bbrak{\provomega}$, define a new class ${}_a\ol{Z}$ by setting
\[
{}_a\ol{Z}{}_g^n(x_1,\dots,x_n) =: 
	\sum_{m\ge 0} \frac{(-1)^m}{m!} 
	\int_{\ol{M}{}_g^{n+m}}^{\ol{M}{}_g^{n}} \ol{Z}{}_g^{n+m}
	(x_1,\dots,x_n,a(\psi_{n+1}),\dots,a(\psi_{n+m})).
\]
All $\psi$-classes are on $\ol{M}{}_g^{n+m}$. With $a=0$, we recover 
$\ol{Z}$. For dimensional reasons, the sum is \emph{finite} 
if $a\in \provomega^2\cdot{A} \bbrak{\provomega}$, but linear components 
$\provomega{A}$ can cause convergence problems, and should \emph
{a priori} be treated as formal variables. For semi-simple DMT's, we 
will see below that ${}_a\ol{Z}$ depends \emph{rationally} on $a\in zA$. 

We claim that ${}_a\left({}_b\ol{Z}\right) = {}_{a+b}\ol{Z}$. Indeed, the 
second-order infinitesimal variation, capturing the linear effect of an 
infinitesimal $b$-translation followed by that of an $a$-translation, is 
\begin{equation}\label{secondvar}
 \frac{\delta^2\ol{Z}{}_g^n}{\delta{a}\delta{b}}(x_1,\dots,x_n) 
 	= \int_{\ol{M}{}_g^{n+1}}^{\ol{M}{}_g^{n}}
	\int_{\ol{M}{}_g^{n+2}}^{\ol{M}{}_g^{n+1}}
	\ol{Z}{}_g^{n+2}\left(x_1,\dots,x_n,a(\varphi^*\psi_{n+1}), 
	b(\psi_{n+2})\right),
\end{equation}
where $\varphi$ is the morphism forgetting the point $n+2$. The 
difference $a(\psi_{n+1})-a(\varphi^*\psi_{n+1})$ is a multiple 
of $[\sigma_{n+1}]$ (cf.~\S\ref{tautrefresh}); it is killed by 
$\psi_{n+2}$, therefore also by $b(\psi_{n+2})$. As a result, the 
right-hand side is symmetric in $a,b$. 

The same argument, using the presence of $\psi$-classes in $a$, 
gives the binomial expansion 
\[
\int_{\ol{M}{}_g^{n}}\ol{Z}{}_g^{n}\left(x + a(\psi_1), \dots, 
	x+a(\psi_n)\right) = \sum\nolimits_k \binom{n}{k}
	\int_{\ol{M}{}_g^{k}}{}_a\ol{Z}{}_g^{k}\left(x, \dots, x\right).
\] 
Defining a potential $\cA_a$ from ${}_a\ol{Z}$ as in \eqref{pot} 
leads to  
\[
\cA_a(x) = \cA(x-a)\quad\text{for}\quad a\in\provomega{A}\bbrak{\provomega}.
\]
In other words, $\ol{Z}\mapsto {}_a\ol{Z}$ lifts to DMT classes the 
translation action of $a$ on $\BF\ppar{\hbar}$. 

\subsection{The $\BSp^+$-action.} \label{symplift}
It is clear how the action of elements $g(\provomega)\in \GL(A)
\bbrak{\provomega}$ can be lifted to $A^{DM}$: the $i$th input of $\ol{Z}$ 
is transformed by $g^{-1}(\psi_i)$. The quadratic differentiations in 
$\Delta$ can be implemented by the addition of boundary terms, as I 
now describe. 

Recall first that $\ol{M}{}_g^n$ has one boundary divisor $D_{ir}$ parametrising 
irreducible nodal curves of genus $g-1$, and additional divisors corresponding 
to reducible nodal curves. The latter ones are labelled by tuples 
$(g',g'',n',n'',\sigma)$, where $(g',n')+(g'',n'') =(g,n)$ and the 
partitions $\sigma$ of marked points range over co-sets in $\frS_n/
(\frS_{n'}\times \frS_{n''})$. As usual, forbidden values of $(g',n')$ 
or $(g'',n'')$, giving unstable degenerations, are excluded. Our labelling 
double-counts the boundaries because of the interchange $(g',n') 
\leftrightarrow (g'',n'')$; in the case when $g'=g''$ and $n'=n''$, 
this becomes an involution of the respective boundary stratum,  
interchanging the branches at the node. In other words, a label 
determines a boundary stratum together with an ordering of the two branches. 
(This also applies to $D_{ir}$, which is a $\bZ/2$-quotient of 
$\ol{M}{}_{g-1}^{n+2}$.) Denote by $\psi',\psi''$ the two $\psi$-classes 
at the node. Call $\Lambda$ the set of labels for reducible degenerations, 
let $\Theta_\lambda$ be the Thom class of the boundary $D_\lambda$, $\lambda\in 
\Lambda$ and $\Theta_{ir}$ the one for $D_{ir}$. 

\begin{definition}\label{quadaction}
The infinitesimal action of $\delta{V}=v'\provomega^p \otimes{v}''
\provomega^q + {v''}\provomega^q\otimes {v'}\provomega^p\in\Delta$ on 
$\ol{Z}\in A^{DM}$ 
is given by
\[\begin{split}
\delta\ol{Z}{}_g^n (x_1,\dots,x_n) =& 
	 -\sum_{\lambda\in \Lambda} \Theta_\lambda\wedge\ol{Z}{}_{g'}^{n'+1}
	(x_{\sigma(1)},\dots,x_{\sigma(n')}, v')\wedge\psi'^p \\
	&\qquad\wedge\ol{Z}{}_{g''}^{n''+1}
	(x_{\sigma(n'+1)},\dots,x_{\sigma(n)}, v'')\wedge\psi''^q\\
	&- \Theta_{ir}\wedge\ol{Z}{}_{g-1}^{n+2}
	(x_1,\dots,x_n,v', v'')\wedge\psi'^p\wedge\psi''^q.
\end{split}\]
\end{definition}
\noindent
(An extension of the boundary class $\ol{Z}$ to a small tubular neighbourhood has 
been implied.) 

This is a \emph{non-linear} action --- notice the quadratic term -- that is, 
a vector field on~$A^{DM}$. To see that this really defines an action of $\Delta$, 
we must check that the effects of any two $\delta{V},\delta{W}$ commute. Now, 
the second variation, computed in either order, is expressed as a sum over all 
boundary strata of complex co-dimension $2$ in $\ol{M}{}_g^n$. These strata 
are labelled by stable curves with two distinguished nodes, and a stratum $S$ 
contributes the following term: the Thom class of~$S$, times the product of 
$\ol{Z}$-classes, with one factor for each irreducible component of the curve, 
and having the pair of entries at the two nodes contracted with~$\delta{V}$, 
respectively with~$\delta{W}$. We are exploiting the facts that nodal 
$\psi$-classes of boundary strata restrict to their counterparts on second 
boundaries, and that the Thom push-forwards, from these same second boundaries, 
factorise into two successive Thom push-forwards of the type appearing in 
Definition~\ref{quadaction}. This is the desired symmetry of the second variation. 
 
Let us now show that the actions just defined on $A^{DM}$ 
assemble to an action of $\BSp^+\ltimes\BH^+$. 

\begin{proposition}\label{68}
The action of $\GL(A)\bbrak{\provomega}$ intertwines naturally with 
those of  $\BH^+$ and $\Delta$, which commute with each other. Moreover, 
the resulting action of $\BSp^+\ltimes\BH^+$ lifts the metaplectic 
action on potentials. 
\end{proposition}
\begin{proof}
The statement about $\GL$ is clear, as it merely transforms the input 
arguments. We now check the infinitesimal ommutation of $\BH^+$ with $\Delta$. 
Recall that the derivative $(\partial_a\ol{Z})_g^n$ in the direction 
$a\in\BH^+$ is the integral along the universal curve of the 
$a$-contraction $a\vdash\ol{Z}{}_g^{n+1}$. Call $\varphi:\ol{M}{}_g^{\bullet+1}\to 
\ol{M}{}_g^\bullet$ the last forgetful morphism. Omitting the obvious symbols in
Definition~\ref{quadaction}, we have
\[\begin{split}
\delta_{a}\delta_{V}\ol{Z} &=
	\sum\Theta_\lambda\wedge\left(\big(\partial_a\ol{Z}\big)'{\psi'}^p
 	\wedge \ol{Z}{}''{\psi''}^q + \ol{Z}{}'{\psi'}^p
 	\wedge \big(\partial_a\ol{Z}\big)''{\psi''}^q \right) + 
	(\partial_a\ol{Z}){}_{g-1}^{n+2}\wedge\Theta_{ir}
	{\psi'}^p{\psi''}^q,\\
\delta_{V}\delta_{a}\ol{Z} &= 
\int_\varphi a\vdash\left(\sum \Theta_\lambda\wedge\ol{Z}{}'{\psi'}^p
 	\wedge \ol{Z}{}''{\psi''}^q \right) + 
	\int_\varphi {a}\vdash\big(\Theta_{ir}\wedge\ol{Z}{}_{g-1}
	{\psi'}^p{\psi''}^q\big);
\end{split}\]
we must show the agreement of the two. 

Now, each $\partial_a\ol{Z}$ in the first formula represents an integral 
$\int_\varphi{a}\vdash \ol{Z}$, but when extracting this operation out to the 
front of the sum,  several discrepancies arise with respect to the second formula:
\begin{enumerate}\itemsep0ex
\item The sum in $\delta_{a}\delta_{V}$ ranges over the boundary  
divisors of $\ol{M}{}{}_g^n$, that in $\delta_{V}\delta_{a}$ 
over those of $\ol{M}{}{}_g^{n+1}$.
\item The Thom classes in  $\delta_{a}\delta_{V}$ are those of  
the boundary divisors downstairs. In  $\delta_{V}\delta_{a}$, we use the 
Thom classes of the boundaries upstairs.
\item The nodal $\psi', \psi''$ classes are the ones from $\ol{M}{}_g^n$ in 
$\delta_{a}\delta_{V}$, but are those on $\ol{M}{}_g^{n+1}$ 
in $\delta_{V}\delta_{a}$.
\end{enumerate}
To establish the commutation of $\BH^+$ with $\Delta$, we must resolve these 
discrepancies.  
Concerning (i), note that each $\varphi^{-1}(D_\lambda)$, from $\ol{M}{}_g^n$,  
is the union of a pair $D_{\lambda'}\cup D_{\lambda''}$ of boundary divisors 
upstairs:\footnote{With the usual exception $g'=g'', n'=n''$ 
when we get a self-intersecting divisor, just as we do for $D_{ir}$.}  they correspond 
to the components of the universal curve $\ol{C}{}^n_g$, and are distinguished 
by the component which contains the marked point absorbing $a$. Therefore, each 
$\lambda$ in the first sum has two matching terms $\lambda', \lambda''$ in 
the second sum. Moreover, because $\varphi^*\Theta_\lambda = \Theta_{\lambda'} 
+ \Theta_{\lambda''}$,  the Thom push-forward operations in the two formulae 
match after integrating down along $\varphi$. We are therefore only left to 
account for the boundary components  $[\sigma_i]$ in the second sum  
(the sections of $\varphi$), which have no counterparts in $\delta_a\delta_V$, 
as well as the discrepancy (iii). However, all of these vanish for the same 
reason: they are killed by the positive powers of $\psi_{n+1}$ present in $a$. 

Finally, let us compare this action with the metaplectic action on potentials. 
Translation was checked earlier. It is clear that the $\GL$-action lifts the 
geometric action on $\BF\ppar{\hbar}$. The analogue for the metaplectic 
action of $\Delta$ is seen in the following interpretation of the power 
series expansion of $\cA$: it is the integral over the moduli of \emph{all, 
possibly disconnected} stable nodal surfaces, with individual components of 
the moduli space weighted down by the automorphisms of their topological type. 
In this expansion of the potential $\cA$, differentiation in the input $x$ 
involves replacing one $x$-entry in a $\ol{Z}$-factor in each term 
by the direction of differentiation, and summing over all choices of 
doing so. Quadratic differentiation is the same procedure, but applied to 
all pairs of entries. Thanks to the Thom classes in formula \eqref
{quadaction}, we can re-interpret the integral of $\delta\ol{Z}{}_g^n$ 
there over $\ol{M}{}_g^n$ as a sum of integrals over the relevant boundaries 
instead. Book-keeping confirms that we thus supply all requisite 
terms for the quadratic differentiation in the expansion of 
$\cA$. \end{proof}

\begin{proposition}\label{dmtransforms}
If $\ol{Z}$ defines a DMT, then so do all of its transforms under 
$\BSp^+\ltimes \BH^+$. More precisely, upon transforming by $\mathrm{e}
^{V(\provomega_{1,2})}\in\exp(\Delta)$,  the nodal co-form $C$ is changed 
to $C(\provomega_{1,2}) + (\provomega_1+\provomega_2) V(\provomega_{1,2})$.  
$\BH^+$-translation does not change $C$. Finally, $\GL^+$ has the 
obvious effect on $C$ via its action on $\Delta$.
\end{proposition}

\begin{proof}
For the action of $\GL^+$, this is clear from first definitions. For 
$\Delta$ and $\BH^+$, we will check that the infinitesimal action gives 
a first-order deformation of a field theory; in the process, we spell out 
its effect on the co-form $C$, and will do so first in the more delicate case 
of $\Delta$. 

More precisely, we claim that for the variation $\delta\ol{Z}$ resulting 
from $\delta{V}$, $\ol{Z} + \epsilon\cdot\delta\ol{Z}$ is a DMT over the 
ground ring $k[\epsilon]/\epsilon^2$, with nodal co-form $C + \epsilon\delta{C}$, 
where $\delta{C}(\provomega_{1,2}) = (\provomega_1 +\provomega_2)\cdot\delta{V}
(\provomega_{1,2})$. Write the DMT factorisation rule \eqref{dmop} at a 
boundary divisor $D_{\lambda_0}$, corresponding to a splitting node and labelled 
by $\lambda_0\in \Lambda$, as
\[
b_2^*\big(\ol{Z}\big) = \ol{Z}{}' \dashv C(\psi',\psi'') \vdash \ol{Z}{}'',
\]
where the two contractions $\dashv$ and $\vdash$ absorb the left and right 
factors of $C$ into the nodal slots of $Z', Z''$. In a DMT over $k[\epsilon]/
\epsilon^2$, the $\epsilon$-linear part of factorisation becomes a ``Leibniz rule''
\begin{equation}\label{leibnizfact}
b_2^*\big(\delta\ol{Z}\big) = \delta\ol{Z}{}' \dashv C(\psi',\psi'') \vdash \ol{Z}{}'' 
	+ \ol{Z}{}' \dashv \delta{C}(\psi',\psi'') \vdash \ol{Z}{}''
	+  \ol{Z}{}' \dashv C(\psi',\psi'') \vdash \delta\ol{Z}{}'',
\end{equation}
which we must verify for our specific $\delta\ol{Z}$ and proposed $\delta{C}$. 

To do so, restrict formula~\eqref{quadaction} for $\delta\ol{Z}$ to 
$D_{\lambda_0}$. Since $b_2^*\Theta_{\lambda_0}$ is the Euler class 
$-(\psi'+\psi'')$ of $D_{\lambda_0}$, the term $\lambda=\lambda_0$ in the 
sum becomes  
\[
\epsilon (\psi'+\psi'')\wedge\ol{Z}{}_{g'}^{n'+1}(x_{\sigma(1)},\dots, 
	x_{\sigma(n')}, v')\wedge\psi'^p \wedge\ol{Z}{}_{g''}^{n''+1}
	(x_{\sigma(n'+1)},\dots,x_{\sigma(n)}, v'')\wedge\psi''^q.
\]
This is precisely the contribution to \eqref{leibnizfact} of the variation 
$\delta{C}$ we posited above. On the other hand, the $\lambda\neq\lambda_0$ 
and $D_{ir}$ terms in \eqref{quadaction} correspond to boundary divisors 
on the Deligne-Mumford moduli space underlying $D_{\lambda_0}$; the nodal 
factorisation rule for $\ol{Z}'$ or $\ol{Z}{}''$ identifies those terms, 
restricted by $b_2^*$, with the $\ol{Z}{}'\dashv{C} \vdash\delta\ol{Z}{}'' 
+ \delta\ol{Z}{}'\dashv{C}\vdash\ol{Z}{}''$ terms in our Leibniz factorisation 
\eqref{leibnizfact}. A similar discussion applies to the boundary 
divisor $D_{ir}$, proving our Leibniz rule.

For an infinitesimal translation by $a(z)$, the first variation $\delta_a\ol{Z}$
is the integral $\int_\varphi {a}(\psi)\vdash\ol{Z}$ along the universal curve. 
Restricting now to $D_{\lambda_0}$, we can split the integral into two 
terms, coming from  the two irreducible components of the curve, to get 
$\ol{Z}{}'\dashv{C}(\psi',\psi'')\vdash\delta\ol{Z}{}'' + \delta\ol{Z}
'\dashv{C}(\psi',\psi'')\vdash \ol{Z}''$, and there is now no additional 
term that could provide a $\delta{C}$ contribution.  

This last argument argument conceals a subtlety: thanks to the 
presence of a $\psi$-factor, contraction with $a(\psi)$ kills the difference 
between the nodal $\psi',\psi''$-classes pulled back from $D_{\lambda_0}$ 
and those on the universal curve, over which integration is taking place. 
(Compare with the proof of Proposition~\ref{68}.)
\end{proof}

\begin{remark}\label{subtle}
If $a(z)$ contains a constant term  \emph{and} the co-form $C$ carries a 
dependence on $\psi',\psi''$, there will be a $\delta{C}$-term  accounting 
for the difference between nodal $\psi',\psi''$-classes on the curve and 
their pull-backs from $D_{\lambda_0}$. However, this does not happen in 
Cohomological Field theories, where $C$ is constant. We will exploit this 
observation in \S\ref{basdiff} below.
\end{remark}

\subsection{The action on semi-simple DMT's.} \label{dmtaction}
Let us now determine the action of a general group element $g\cdot{e}^V 
\cdot \zeta \in \GL^+\ltimes\left(\exp\Delta\times \BH^+\right)$ on 
semi-simple DMT's, in terms of their classification. The natural 
description involves the alternative parameters $(\tilde{Z}, W, E)$ 
of  \eqref{altB}. We will meet a restriction on the $z$-linear term 
of $\zeta$.

Write $\zeta = \sum_{j> 0} \zeta_j\provomega^j$. If $\zeta_1= 0$, we 
will not change the algebra structure on $A$, and the reader can skip 
straight to the statement of the Proposition below, ignoring the primes. 
However, if $\zeta_1\neq 0$, let $A'$ be the Frobenius algebra which is 
identified with $A$ as a vector space with quadratic form $\beta$, but 
with the multiplication re-defined in such a way that the new projectors 
are $P_i'= (1+\zeta_1)P_i$. Thus, the new multiplication is $x\cdot'y := 
x\cdot{y}\cdot(1+\zeta_1)^{-1}$, the new identity is $\mathbf{1}' = 
\mathbf{1} +\zeta_1$, and the Euler class is now $\alpha' =\alpha\cdot(
\mathbf{1} + \zeta_1)^{-1}$. However, note that the vector $(\alpha')^{1/2}$, 
with the square root in the prime algebra, agrees with the old $\alpha^{1/2}$. 
The construction breaks down when $(\mathbf{1}+\zeta_1)$ is \emph{not} a 
unit in $A$, so we must exclude that case. 

\begin{proposition}\label{action}
The trivial DMT $I_A$ transforms under $g\cdot\mathrm{e}^V \cdot \zeta 
\in \GL^+\ltimes(\exp(\Delta)\times\BH^+)$ into the semi-simple theory based 
on the algebra $A'$, with alternative parameters 
\[
\tilde{Z} = \exp'\left\{\sum\nolimits_{j\ge 0} 
	a'_j\kappa_j\right\}, \:{E(z) = g(z)},\:{W(z_{1,2}) = V(z_{1,2})}.
\]
Here, $\sum_{j\ge 0} a_j'\provomega^j$ is the Taylor expansion of 
$\log'\alpha^{1/2} -\log'(\mathbf{1} + \zeta/\provomega)\in A'\bbrak
{\provomega}$, and the logarithm and exponential are computed in $A'$.
\end{proposition}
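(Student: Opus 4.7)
I apply the three factors of $g\cdot\mathrm{e}^V\cdot\zeta$ in order.

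\emph{The $\GL^+$-factor.} Applied to any DMT, $g$ twists each input by $g^{-1}(\psi_i)$. In the smooth-surface description of \S\ref{freebd}, this pre-composes the existing $E$ with $g$; the algebra, $\tilde Z^+$, and $W$ are unaffected. Starting from the trivial theory ($E=\Id$) this yields $E=g$. \emph{The $\exp(\Delta)$-factor.} Scholium~\ref{coformchange} changes the co-form $C$ by the addition of $(\provomega_1+\provomega_2)V(\provomega_{1,2})$; by \eqref{altB} this is precisely $W\mapsto W+V$ leaving $E$, $\tilde Z^+$ and the algebra intact, so from $W=0$ we get $W=V$.

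\emph{The $\BH^+$-translation} is the substantive step. For the trivial theory, $\ol{Z}{}_g^{n+m}(\cdot)=\theta(\alpha^g(\cdot))$ is constant, and the recipe of \S\ref{trans} gives
\[
{}_\zeta\ol{Z}{}_g^n(x_1,\dots,x_n)=\theta\!\left(\alpha^g\cdot x_1\cdots x_n\cdot Y_g^n\right),\quad
Y_g^n:=\sum_{m\geq 0}\frac{(-1)^m}{m!}\,\pi_{m,*}\!\!\left(\prod_{j=1}^m\zeta(\psi_{n+j})\right),
\]
a class in $A\otimes H^\bullet(\ol{M}{}_g^n)$ via $\pi_m:\ol{M}{}_g^{n+m}\to\ol{M}{}_g^n$. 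Since the integrand involves no $\psi_i$ with $i\leq n$, iterating $\varphi_*\psi_{n+1}^{j+1}=\kappa_j$ expresses $Y_g^n$ as a polynomial in $\kappa$-classes alone, with no $\psi_i$-corrections. The previous proposition ensures that $_\zeta\ol{Z}$ is a DMT; its restriction to smooth stable surfaces is then a \S\ref{fixedbd}-style fixed-boundary theory, so by monoidal group-likeness on $M_\infty$ and the Madsen--Weiss theorem, $Y_g^n=\exp'\!\bigl(\sum_{k\geq 0}a'_k\kappa_k\bigr)$ for certain $a'_k$ in a Frobenius algebra $A'$ on the underlying vector space of $A$.

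\emph{Identification.} For the algebra change, compute $Y_{0,3}\in A$ directly: since $\dim\ol{M}{}_{0,3+m}=m$ and $\zeta\in\provomega A\bbrak{\provomega}$, only the term $\zeta_1^m\psi_4\cdots\psi_{3+m}$ contributes, giving $\int\zeta(\psi_4)\cdots\zeta(\psi_{3+m})=m!\cdot\zeta_1^m$, hence $Y_{0,3}=\sum_m(-1)^m\zeta_1^m=(\mathbf{1}+\zeta_1)^{-1}$. The resulting three-point function $\theta(x_1 x_2 x_3\cdot Y_{0,3})$ is the trinion of the Frobenius structure $A'$ with identity $\mathbf{1}'=\mathbf{1}+\zeta_1$ and projectors $P_i'=(1+\zeta_1)P_i$, matching the statement. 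The higher coefficients $a'_k$ are then pinned down by analysing one forgetful push-forward $\varphi:\ol{M}{}_g^{n+1}\to\ol{M}{}_g^n$ and using $\kappa_j=\varphi^*\kappa_j+\psi_{n+1}^j$: the induced recursion on the generating series $G(z):=\exp'(-\sum_{k\geq 0}a'_k z^k)$ forces $G(z)=\alpha'^{-1/2}\cdot(\mathbf{1}+\zeta(z)/z)$ in $A'\bbrak{z}$, which is equivalent to the Proposition's formula.

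\emph{Main obstacle.} The subtle aspect is the change of algebra: the $\zeta_1$-part of $\zeta$ shifts the identity of the Frobenius structure, and the generating-function equation $\sum a'_k z^k=\log'\alpha'^{1/2}-\log'(\mathbf{1}+\zeta/z)$ only makes sense in $A'$, where $\log'\mathbf{1}'=0$. The rank-one case is the classical Kabanov--Kimura change-of-variables alluded to after Theorem~1; the general case decouples diagonally in the idempotent basis of $A'$ and reduces to it.
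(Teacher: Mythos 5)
Your overall strategy --- decouple the three factors, reduce the translation to push-forwards of products of $\zeta(\psi_{n+j})$, and use $\kappa_j=\varphi^*\kappa_j+\psi_{n+1}^j$ --- is essentially the paper's, and your computation of $Y_{0,3}=(\mathbf{1}+\zeta_1)^{-1}$ correctly identifies the algebra $A'$. But there are two genuine problems. First, the order of application. The element $g\cdot\mathrm{e}^V\cdot\zeta$ acts with $\zeta$ innermost (on the trivial theory) and with $\mathrm{e}^V$ applied while $E$ is still $\Id$; you apply the factors left to right. This matters: once $E=g\neq\Id$, the co-form change $C\mapsto C+(\provomega_1+\provomega_2)V$ of Scholium~\ref{coformchange}, read through \eqref{altB}, is $W\mapsto W+g(\provomega_2)^{-1}V\,g^*(\provomega_1)^{-1}$, not $W\mapsto W+V$. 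Likewise your formula ${}_\zeta\ol{Z}{}_g^n=\theta(\alpha^g x_1\cdots x_n Y_g^n)$ is a computation on the trivial theory, whereas in your ordering $\zeta$ acts on a theory already carrying $E=g$ and $W=V$, so the inserted points are twisted by $g^{-1}(\psi_{n+j})$ and boundary terms intervene. You must either reverse the order or prove, as the paper does via the first-order variation and the vanishing of $\psi$-corrections, that translation does not disturb $E$ and $W$; without one of these the decoupling you rely on is unjustified.

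Second, the identity $\sum_j a'_j\provomega^j=\log'\alpha^{1/2}-\log'(\mathbf{1}+\zeta/\provomega)$ is the quantitative content of the Proposition, and you only assert that an unspecified ``induced recursion'' forces it. Note that the naive recursion obtained by splitting off one forgotten point does not close up: singling out the point $n+1$ in the $m$-point term of $Y_g^n$ produces a coefficient $(-1)^m/m!$ against the $(m-1)$-point term of $Y_g^{n+1}$, so the factorials mismatch and one does not simply recover $-\varphi_*(\zeta(\psi_{n+1})\,Y_g^{n+1})$. The paper circumvents this by differentiating in the translation parameter (one insertion at a time), obtaining $\delta\tilde{Z}(\provomega^j)=-\alpha^{-1/2}\tilde{Z}(\provomega^j)^2\,\delta\zeta(\provomega)/\provomega$ after specialising $\kappa_j\mapsto\provomega^j$, solving this ODE from the initial value $\tilde{Z}=\alpha^{\kappa_0/2}$, and then undoing the specialisation using linearity of $\log\tilde{Z}$ in the $\kappa_j$; the $\provomega$-linear part of $\zeta$, which changes the algebra, is handled by a separate closed-form integral over $\ol{M}{}_g^{n+1}$. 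Until you write down and solve an actual recursion (or the equivalent ODE), the formula for the $a'_j$ --- and hence the Proposition --- is not established.
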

\begin{remark}
Since $\log'(\mathbf{1} + \zeta_1) = \log'(\mathbf{1}') = 0$, we have 
$\exp'a'_0 = \alpha^{1/2}$. In the original 
algebra $A$, we can expand $\log\alpha^{1/2} - \log(\mathbf{1} + \zeta/
\provomega) = \sum_{j\ge 0} a_j\provomega^j$; the relation $\exp'x' 
= (1+\zeta_1)\cdot\exp{x}$ for $x' = (1+\zeta_1)\cdot{x}$ shows that the 
Taylor coefficients are then related by $a_j' = (\mathbf{1}+\zeta_1)a_j$. 
The operators of multiplication by $\exp\big\{\sum\nolimits_{j\ge0} a_j 
\kappa_j\big\}$ on $A$ and by $\exp'\big\{\sum\nolimits_{j\ge0} a'_j 
\kappa_j\big\}$ on $A'$ coincide, when we identify the two vector spaces as 
above. (However, the customary relation $a_0 =\log\alpha^{1/2}$ is broken 
if $\zeta_1\neq 0$ since involves the `wrong' $\log$.) 
\end{remark}

\begin{proof}
Note that $E$ and $W$ do not change the Frobenius algebra structure, 
which is determined by $\beta$ and by the tensor $Z_0^3: A^{\otimes3} 
\to \bC$. The effect of $\zeta$ will be checked in a moment. In particular, 
semi-simple theories remain semi-simple and we are merely looking for 
the change in parameters. 

The effect of $E$ is clear from its definition, while that of $\mathrm{e}^V$ 
was explained in Proposition~\ref{dmtransforms} above: on a theory with $E=\Id$, 
$W\mapsto W+V$. To understand $\zeta$, note first that translation cannot 
affect the $E$ and $W$ parameters of a DMT, because of the group law in 
$\BSp^+\ltimes\BH^+$. To find its effect on $\tilde{Z}$, it suffices to 
take $n=1$ and compute its first-order variation over $\tilde{M}{}_g^1$ 
under $\delta\zeta$. This leads to a differential equation governing the 
action of $\zeta$, which we solve. We omit the $\zeta$-subscript from the 
notation for tidiness (so $\tilde{Z}$ should really be ${}_\zeta\tilde{Z}$,
 etc.) and let $C_{g,1} \to M_{g,1}$ denote the universal curve. Then,
\[
\delta\tilde{Z}(\kappa_j) = -\int_{C_{g,1}}^{M_{g,1}}
	\alpha^{-1/2}\cdot\tilde{Z}(\kappa_j)\cdot \delta\zeta(\psi_2) = 
	-\alpha^{-1/2}\tilde{Z}(\kappa_j)\int_{C_{g,1}}^{M_{g,1}} 
	\tilde{Z}(\psi_2^j)\delta\zeta(\psi_2),
\]
where $\tilde{Z}(\kappa_j) = \exp\big\{\sum_{j\ge 0} c_j\kappa_j\big\}$ 
with the $c_j$ as yet unknown, $\tilde{Z}(\psi_2^j) = \exp\big\{
\sum_j c_j \psi_2^j\big\}$ and we have used the fact that $\kappa_j$ 
inside the integral is $\kappa_j$ outside plus $\psi_2^j$. Integration 
converts $\psi_2^{j+1}$ to $\kappa_j$. Quadratic and 
higher terms in $\delta\zeta$ do not give rise to $\kappa_0$ 
and so do not affect the multiplication in $A$. Assuming first 
that $\zeta_1=0$, we specialise to $\kappa_j\mapsto\provomega^j$:
\[
 \delta\tilde{Z}(\provomega^j) = -\alpha^{-1/2}\tilde{Z}
 	(\provomega^j)^2\cdot\frac{\delta\zeta(\provomega)}{\provomega}\,,
\]
which is solved by
\[
 {}_\zeta\tilde{Z}(\provomega^j) = \frac{\alpha^{1/2}}
 {\mathbf{1} + \zeta(\provomega)/\provomega}
\]
since we know the initial value $\tilde{Z}=\alpha^{1/2}$. 
Now, $\log\tilde{Z}$ is linear homogeneous in the $\kappa_j$, so we recover 
the true $\tilde{Z}$ from our specialisation by substituting 
$\provomega^j\mapsto \kappa_j$ in $\log\tilde{Z}$, and then 
exponentiating. 

Finally, the effect of $\zeta_1$-translation on the trivial 
$A$-theory can be determined directly from the formula
\[
\int_{\ol{M}{}_g^{n+1}}^{\ol{M}_g^1} 
	\psi_1\wedge\dots\wedge\psi_n = (2g+n-2)\cdot\dots\cdot(2g-1),
\]
giving
\[
{}^1_\zeta\tilde{Z}_g = \alpha^g\sum_n \frac{(-\zeta_1)^n}{n!} 
	\int_{\ol{M}{}_g^{n+1}}^{\ol{M}{}_g^1} 
	\psi_1\wedge\dots\wedge\psi_n  
	=\alpha^g\sum_n \binom{1-2g}{n} \zeta_1^n = 
	\frac{\alpha^g}{(\mathbf{1}+\zeta_1)^{2g-1}}.
\] 
This introduces no higher $\kappa$-classes, but changes the multiplication 
on $A$ in the manner claimed. 
\end{proof}

\subsection{Cohomological Field theories.}\label{cohftcase}
We now deduce Theorem~1 from Proposition~\ref{action} by identifying the 
subgroup of $\BSp^+\ltimes\BH^+$ which preserves the Cohomological Field 
theory constraint (\ref{gwconst}.i). Recall from \S\ref{gromwitconds} 
that this constraint takes the equivalent forms $B'=\Id$, $C'=\Id$ and 
$D=\Id$. In terms of $E$ and $W$, we need the identity
\begin{equation}\label{wfrome}
W'(\provomega_1,\provomega_2) = W'_E := \frac{E(\provomega_1)^{-1}
	E(-\provomega_2)-\Id}{\provomega_1 + \provomega_2},
\end{equation}
together with the symplectic condition $E(\provomega)^* E(-\provomega) 
\equiv \Id$ of \S\ref{gromwitconds}. In \S\ref{loopgroup}, we wrote  
$\BSp^+_L$ for the subgroup of symplectic matrix series $E\in\GL^+$. 
It follows from Proposition~\ref{dmtransforms} that the group homomorphism 
$E(z)\mapsto E(z)\cdot{e}^{W_E(z_{1,2})}$ identifies $\BSp^+_L$ with the 
stabiliser of $C'=\Id$ in $\BSp^+$; it is a new, a new sheared embedding 
of $\BSp^+_L$ in $\BSp^+$. 
We now use the symplectic form $\Omega$ of \S\ref{loopgroup} to 
identify the symplectic double of $A\bbrak{\provomega}$ with $A\ppar{\provomega}$. 
The group $\GL^+$ acts on $A\ppar{\provomega}$, point-wise in $\provomega$;  
its subgroup $\BSp^+_L$, by definition, preserves $\Omega$ and lies in $\BSp$. 
We write $E\mapsto\widehat{E}$ for this point-wise embedding of $\BSp^+_L$ in $\BSp$.

\begin{proposition}\label{2emb}
The two embeddings of $\BSp_L^+$ into $\BSp$ agree:
 $\widehat{E}= E\cdot{e}^{W_E}$.
\end{proposition}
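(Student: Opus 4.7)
The plan is to exploit the normal subgroup structure $\BSp^+ = \GL^+ \ltimes \exp(\Delta)$ and verify that the two embeddings have the same image in $\GL^+$ and the same correction in $\exp(\Delta)$. For the projection to $\GL^+$, $E \cdot e^{W_E}$ tautologically has $\GL^+$-part equal to $E$. For $\widehat{E}$, I would observe that point-wise multiplication by $E(z) \in \Id + z\cdot\End(A)\bbrak{z}$ preserves the Lagrangian $A\bbrak{z}$ (a product of two positive power series is positive), and induces there the point-wise $\GL^+$-action of $E$. Hence $\widehat{E} = E \cdot e^{V_E}$ for a unique $V_E \in \Delta$, reducing the task to the identification $V_E = W_E$.

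To compute $V_E$, use the Lagrangian splitting $A\ppar{z} = A\bbrak{z} \oplus z^{-1}A[z^{-1}]$ to write
\[
\widehat{E} = \begin{pmatrix} E_+ & B \\ 0 & D \end{pmatrix},
\]
where $E_+ = E$ is the point-wise action on $A\bbrak{z}$, $D$ is forced by the symplectic condition $E^*(z)E(-z)=\Id$, and $B(a\,z^{-k-1}) = [E(z)\cdot a\,z^{-k-1}]_+ = \sum_{l\ge 0} E_{l+k+1}(a)\,z^l$. Factor $\widehat{E}$ as $\mathrm{diag}(E_+, D) \cdot U$ with $U$ unipotent; the diagonal part realises the geometric $\GL^+$-action by $E$ in Fock space, while $U$ represents the correction $e^{V_E}\in \exp(\Delta)$, with kernel $V_E$ obtained by dualising the off-diagonal operator $E_+^{-1} B$ through $\Omega$.

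The core of the proof is then a one-line resummation using the partial-fraction identity
\[
\frac{E(z_2)^{-1}E(-z_1) - \Id}{z_1+z_2} \in \End(A)\bbrak{z_1,z_2},
\]
which is a genuine Taylor series because the numerator vanishes on the diagonal $z_2=-z_1$ (a direct consequence of the symplectic condition on $E$). To match this against $E_+^{-1}B$, expand $E(z_2) - E(-z_1) = \sum_{m\ge 1} E_m \sum_{k+l+1=m} z_2^l (-z_1)^k$ and translate $(-z_1)^k \leftrightarrow z_1^{-k-1}$, the dictionary between the symmetric tensor $\Delta\subset A^{\otimes 2}\bbrak{z_{1,2}}$ and off-diagonal operators on $A\ppar{z}$ supplied by the $\beta(x(-z), y(z))$ factor in $\Omega$. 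Left-multiplying by $E_+^{-1}$ converts $E(z_2)$ to $\Id$ and $E(-z_1)$ to $E(z_2)^{-1}E(-z_1)$, producing exactly $W_E$.

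The main obstacle I anticipate is the careful bookkeeping of the sign and polarisation conventions — in particular, reconciling the $z \mapsto -z$ in the residue pairing $\Omega$ with the sign in the formula for $W_E$, and verifying that the symplectic condition on $E$ is precisely the condition that makes $V_E$ symmetric (so it truly lies in $\Delta$). Once this dictionary is fixed, the identification $V_E=W_E$ is immediate, and functoriality in $E$ (which makes both sides group homomorphisms) follows from the already-established semidirect product structure of $\BSp^+$.
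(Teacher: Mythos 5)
Your argument is correct and follows essentially the same route as the paper's proof: both work in the monomial decomposition of $A\ppar{\provomega}$ relative to the polarisation $A\bbrak{\provomega}\oplus \provomega^{-1}A[\provomega^{-1}]$, identify the diagonal blocks of $\widehat{E}$ with the geometric $\GL^+$-action (the negative-mode block being $(E^*)^{-1}$ by the symplectic condition), and match the remaining off-diagonal block against the divided difference defining $W_E$. The only distinction is that the paper linearises and verifies the identity on Lie algebras, relying on both sides being group homomorphisms, whereas you carry out the finite group-level factorisation $\widehat{E}=\mathrm{diag}(E_+,D)\cdot U$ and resummation directly, at the cost of the extra (correctly flagged) check that the dualised kernel of $U$ is symmetric and hence genuinely lies in $\exp(\Delta)$.
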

\begin{proof}
We verify this on Lie algebras. Let $\delta{E} = \sum_{n>0} 
\delta{E}_n\provomega^n$; then,  
\[
\delta{W_E}(\provomega_{1,2}) =  \frac{\delta{E}(-\provomega_2)- 
	\delta{E}(\provomega_1) }{\provomega_1+\provomega_2} = 
	-\sum_{p,q\ge 0} \;\delta{E}_{p+q+1} (-\provomega_2)^p\provomega_1^q.
\]
In the monomial decomposition $\{z^n\cdot{A}\}_{n\in \bZ}$ of $A\ppar
{\provomega}\cong A\bbrak{\provomega}\oplus A\bbrak{\provomega}^*$, 
the geometric action of $\delta{E}$ is given by the operator with 
$(p,q)$ blocks 
\[
O_{p,q} = \begin{cases} 
	-\delta{E}_{p-q} & \text{for $p>q\ge 0$} \\
	(-1)^{p+q-1}\delta{E}_{p-q}^* & \text{for $0>p >q$} \\
	0 & \text{otherwise}
	\end{cases}
\]
The symplectic condition is $(-1)^{p+q} \delta{E}_{p-q}^* = \delta{E}_{p-q}$. 
On the other hand, the matrix corresponding via the symplectic form 
$\Omega$ to the quadratic differentiation operator $\delta{W_E}(\provomega_
{1,2})$ has $(p,q)$-blocks $-\delta{E}_{p-q}$ in positions $q<0\le p$. 
This supplies precisely the missing $p\ge 0 >q$ blocks for the 
point-wise multiplication action of the operator $\delta{E}(\provomega): 
A\ppar{\provomega}\to A\ppar{\provomega}$. Our statement follows. 
\end{proof}

\subsection{Flat vacuum.}\label{flatidagain}
Let us identify the vacuum vector (\S\ref{vacax}) of the theory in terms 
of the group element $\widehat{E}\cdot\zeta$. In particular, we will identify 
the subgroup of $\BSp_L^+\ltimes\BH^+$ whose action on $I_A$ preserves the 
flat vacuum condition (\ref{gwconst}.ii) with the conjugate of $\BSp_L^+$ 
by the translation $T_\provomega$ by $\provomega\mathbf{1}$. 
This will conclude the proof of Theorem~2.

By equation~\eqref{zfrome} and Proposition~\ref{action},  
\[
E^{-1}(\provomega)(\mathbf{v}) = \exp'\left\{-\sum\nolimits_{j>0} 
	a'_j\provomega^j\right\} = \mathbf{1}+\zeta/\provomega
\]
so $\zeta = \provomega(E^{-1}(\provomega)(\mathbf{v})- \mathbf{1})$. 
Clearly, the CohFTs with vacuum $\mathbf{v}$ constitute the orbit 
\[
\left( T_{z\mathbf{v}(z)}\cdot\widehat{E}\cdot T_{\provomega}^{-1}\right) (I_A),
\] 
with $E$ ranging over the symplectic $\End(A)$-valued series considered.  
(Note that the action of $T_z$ on $I_A$ is singular, but the conjugate 
$T_z\widehat{E}T_{z}^{-1}$ makes good sense, so that the group element 
in parentheses acts.) 
In particular, notice that changing the vacuum of a theory with fixed 
underlying algebra and symplectic parameter $E$ is accomplished by 
$\BH^{++}$-translation.

\section{Frobenius manifolds and homogeneity}\label{homog}

We now enrich a given DMT $\ol{Z}$ 
into a family of DMT's parametrised by a (possibly formal) neighbourhood $U$ 
of $0\in A$. When starting with a cohomological field theory, the genus zero 
part of this family defines on $U$ the  structure of a \emph{Frobenius manifold}, 
a notion introduced by Dubrovin \cite{dub}. The family of DMT's will allow 
us to incorporate the grading information of Gromov-Witten theory in the 
form of a \emph{homogeneity} condition under a vector field on $U$. The 
reader may consult \cite[\S{I}]{man} or \cite{leepan} for a broader 
account of the subject. 

\begin{definition}\label{zedu}
Given a DMT $\ol{Z}$, define for $u\in U$
\[
{}_u\ol{Z}{}_g^n(x_1,\dots,x_n) := \sum_{m\ge 0} \frac{(-1)^m}{m!} 
	\int_{\ol{M}{}_g^{n+m}}^{\ol{M}{}_g^{n}} \ol{Z}{}_g^{n+m}
	(x_1,\dots,x_n,u,\dots,u).
\]
\end{definition}
\noindent
Restriction to $U$ may be required for convergence, but for convenience 
we will treat $u$ as a genuine parameter in our formulae. 
It is straightforward to verify the DMT axioms for ${}_u\ol{Z}$ from those 
for $\ol{Z}$; the construction is formally similar to the \emph{translation} 
of \S\ref{trans}, but in this case we are using the subspace $A \subset 
A\bbrak{\provomega}$ of the Heisenberg group. However, while the effect of 
translation by $\provomega{A}\bbrak{\provomega}$ was easily expressed in 
terms of $\kappa$-classes, the structure resulting now is more complicated, 
because the new translation interacts with the boundary terms, and fails 
to commute with $\BH^+$. Microscopically, the absence of a $\psi$-factor 
in $u$ breaks the calculations in the proof of Proposition~\ref{dmtransforms}. 
Conceptually, in the case of open-closed field theories, which are controlled 
by linear categories with a cyclic trace, the $u$-parameter is related to 
deformations of the category of boundary states, whereas translation 
by $\BH^+$ is tied to the (easier) deformation of the trace. There is, 
however, one easy fact to state, which was already mentioned in Remark~\ref{subtle}.

\begin{proposition}\label{cohftpreserved}
If the DMT $\ol{Z}$ is actually a CohFT, then so is every ${}_u\ol{Z}$; 
moreover, the Frobenius bilinear form $\beta$ remains unchanged. \qed
\end{proposition}

\subsection{Frobenius manifold of a CohFT.}\label{frobman}
The previous proposition does conceal something: the product and the Frobenius 
trace $\theta$ on $A$ \emph{will} vary with $u$. We obtain a 
$u$-dependent family of Frobenius algebra structures on $A$, viewed 
as a fixed vector space with bilinear form $\beta$. Spelt out, we get for 
$g=0, n=3$ a map
\[
{}_u\ol{Z}{}_0^3: A^{\otimes 3} \to \bC.
\] 
Converted to a map $A^{\otimes 2}\to A$ by means of $\beta$, this gives a 
$u$-dependent multiplication $\cdot_u$ on $A$. This multiplication is 
evidently commutative, because of the symmetry of $\ol{Z}$, but must 
be associative as well, since it is part of a CohFT structure. (Explicitly, 
we can apply the nodal factorisation rule to the several boundary 
restrictions of the map ${}_u\ol{Z}{}_0^4: A^{\otimes 4} \to H^*\big(
\ol{M}{}_0^4\big)$. Since $\ol{M}{}_0^4 = \bP^1$ is connected, these 
restrictions define the same map $A^{\otimes 4}\to \bC$, so that 
$\beta(a\cdot_u{b}, c\cdot_u{d})$ is symmetric in the four variables.)  

We write $A_u$ when referring to the algebra structure at $u$, and  
identify each $A_u$ with the tangent space $T_uU$ using the linear structure. 
The multiplications satisfy an integrability condition, which is captured 
by the observation that ${}_u\ol{Z}{}_0^3$ is the third total 
partial derivative of a function ${}_u\ol{Z}{}_0^0$. This function, the 
\emph{potential} of the Frobenius manifold, is expressed by the series in 
Definition~\ref{zedu} with $g=n=0$, after omitting the $m\le 2$ terms. This 
integrable family of Frobenius algebras on $U$, together with the (flat) metric 
$\beta$, is called a \emph{Frobenius manifold} structure. The linear structure 
on $U\subset A$ is characterized by the \emph{flat coordinates} under $\beta$. 

We say that the Frobenius manifold has \emph{flat identity} if the unit vector 
field $\mathbf{1}$ is flat in the metric (constant in flat coordinates). It is 
shown in \cite[III]{man} that this is follows from the flat vacuum condition on 
$\ol{Z}$; we will also verify that as part of Proposition~\ref
{vacdiff} below. A Frobenius manifold is in fact equivalent to the datum of 
a genus-zero CohFT (the collection of classes $\ol{Z}{}_0^n$, satisfying 
the CohFT axioms), by an explicit reconstruction \cite{man}. 

\subsection{The basic differential equations.} \label{basdiff}
Semi-simplicity of $A$ ensures that of the nearby $A_u$, so nearby theories are 
classified by $u$-dependent data $ \tilde{Z}_u, E_u, B_u$. Assuming that $\ol{Z}$ 
is a CohFT, I describe the changes in $\tilde{Z}$ and $E$ by means of differential 
equations.

To isolate the effect of the varying multiplication, we will express it 
in the (moving) normalised canonical basis $p_i = \theta_i^{-1/2} P_i$,   
in which the product can be computed entry-wise. Let $\Pi_u:A_0\to A_u$ 
be the map identifying the normalised canonical bases in the two spaces. 
In the normalised canonical identification $\bC^N\cong A_0$, this 
gives the normalised canonical framing of $TU$. Let $*$ denote the 
entry-wise multiplication of column vectors, and $\cdot_u$ the 
multiplication in $A_u$; we have 
\begin{equation}\label{pimult}
\Pi_u(x*y) = \alpha_u^{-1/2}\cdot_u\Pi_u(x)\cdot_u\Pi_u(y).  
\end{equation}
Also define the following column vector depending on $u$ and on the $\kappa$-classes,
\[
Y_u =Y_u(\kappa) := \Pi_u^{-1}(\alpha^{1/2}\tilde{Z}_u), 
\]
whose entries are the eigenvalues of multiplication by $\tilde{Z}_u$: that is, 
$\Pi\circ (Y_u *)\circ\Pi^{-1} = (\tilde{Z}_u\cdot)$. (The $i$th entry of $Y$ is 
$\exp\{\sum_{j\ge0} a_{ij}\kappa_j\}$, with $u$-dependent coefficients $a_{ij}$.) 
Write $Y_u(z)$ for the result of the substitution $\kappa_j\mapsto z^j$. Since 
$\log{Y}(\kappa)$ is linear homogeneous in the $\kappa$'s, $Y(z)$ determines 
$Y(\kappa)$. We can now write the propagator ${}_u^1Z^n_g: A_u^{\otimes{n}} 
\to A_u$ for smooth curves of genus $g$, with incoming points $\{1,\dots,n\}$ 
and one outgoing point labelled by $0$, as follows:
\begin{equation}\label{diagZ}
{}_u^1Z^n_g(x_1,\dots,x_n) = E_u(-\psi_0)\Pi_u \left(
	Y_u(\kappa)* \Pi_u^{-1}E_u^{-1}(\psi_1)(x_1)*\dots*
	 \Pi_u^{-1}E_u^{-1}(\psi_n)(x_n)\right).
\end{equation}
The contribution of $n$ to $\kappa_0 = 2g+n-1$ gives a factor of $\alpha^{n/2}$ in 
${}_u\tilde{Z}$ and has the virtue of correcting the $n$ operations $*$ into 
the multiplication $\cdot_u$, cf.~\eqref{pimult}. We now differentiate in $u$.  
 
\begin{proposition}\label{varyE}
$E_u$ and $Y_u$ verify the following systems of ODE's in $u$, $\forall{v}\in T_uU$: 
\begin{align}
\tag{\ref{varyE}.a} \frac{\partial(E_u\Pi_u)}{\partial{v}}(\provomega)\circ 
	\Pi_u^{-1}& = \left[E_u(\provomega), \frac{(v\cdot_u)}{\provomega}\right];  \\
\tag{\ref{varyE}.b} \frac{\partial{Y_u}(z)}{\partial{v}}*Y_u(z)^{-1} &= 
	-Y_u(z)*\Pi_u^{-1}E_u(z)^{-1}\left(\frac{v}{z}\right) + 
		Y_u(0)*\Pi_u^{-1}\left(\frac{v}{z}\right). 
\end{align}
\end{proposition}

\noindent Before turning to the proof, the following comments might be 
helpful.

\begin{remark}\label{clarifeq}
\begin{trivlist}\itemsep0ex
\item (i) We use the flat structure of $TU$ to differentiate 
$\Pi_u$ and $E_u$.
\item (ii) Since $E=\Id \pmod{\provomega}$, the commutator in equation~(\ref
{varyE}.a) is regular at $\provomega=0$, where we obtain, with $E_{u,1}$ 
denoting the $\provomega$-linear term of $E_u$,
\[
\partial_v\Pi_u\circ\Pi_u^{-1} = 
	\left[{E}_{u,1}, (v\cdot_u) \right].
\]
By substituting this for the derivative of $\Pi$, (\ref{varyE}.a) can be 
expressed as a \emph{non-linear} ODE system in $E$ alone; $\Pi$ can then be 
recovered from $E$.
\item (iii) The second term on the right in equation~(\ref{varyE}.b) removes 
the pole present in the first term. 
\item (iv) Let $C_g^1:=M_g^1\times_{M_g}M_g^1$ be the universal curve over $M_g^1$ 
and note that $\int_{C_g^1}^{M_g^{1}} \psi^{j} =\kappa_{j-1}$, or zero if $j=0$. 
Because $\partial_v{Y}(\kappa) * Y^{-1}$ is linear homogeneous in 
the $\kappa$'s, we can write the ODE's for $Y_u(\kappa)$ explicitly:
\begin{equation} \tag{\ref{varyE}.c}
\frac{\partial{Y_u}(\kappa)}{\partial{v}}* Y_u(\kappa)^{-1} = 
	- \int_{C_g^1}^{M_g^{1}} Y_u(\psi)*\Pi^{-1}E^{-1}(\psi)(v).
\end{equation} 
Indeed, we will prove the equation in this form.
\item (v) A coordinate-free form of equation~(\ref{varyE}.b) is found 
in Proposition~\ref{vacdiff} below.
\end{trivlist}
\end{remark}

\begin{proof}
Proving the proposition will require us to find the variation of 
\eqref{diagZ} with $n=1$. However, to keep the formulas simple, we 
first write out the variation with $n=0$. It will then be straightforward 
to describe the additional terms for general $n$. We also drop the 
$u$-subscript from the notation when no confusion arises. 

From \eqref{diagZ}, 
\begin{equation}\label{varyZbar}
\partial_v({}^1Z) = \partial_v(E\Pi)(-\psi_0)\left(Y(\kappa)\right) 
	+ E(-\psi_0)\Pi\left(\partial_vY(\kappa)\right).
\end{equation}
This same variation is also, by definition, an integral along the 
universal curve:
\[
-\int_{C_g^1}^{M_g^{1}} {E}(-\psi_0)\Pi\left
	(Y(\kappa)*\Pi^{-1}E^{-1}(\psi)(v)\right) -v\cdot_u\frac
	{1-E(-\psi_0)}{\psi_0}\Pi\left(Y(\kappa)\right); 
\]
the second term is the boundary correction to $\ol{Z}$ on the diagonal 
section $\sigma_0$ of $M_g^1\times_{M_g}M_g^1$. The requisite picture for this 
correction attaches a three-pointed $\bP^1$ to $C_g^1$ at its output $\sigma_0$; 
this $\bP^1$ absorbs $v$ at the second input, and the output is read at the third 
point. 

Using the familiar formula $\kappa_j = \varphi^*\kappa_j + \psi^j$ upstairs, 
the integral above (without sign) becomes 
\[
{E}(-\psi_0)\Pi\left(Y(\kappa)*\int Y(\psi) * \Pi^{-1}E^{-1}
	(\psi)(v)\right) + \frac{E(-\psi_0) -1}{\psi_0}
	\left(v\cdot_u\Pi(Y(\kappa))\right);
\]
the second term comes from the correction to $\psi_0$ on the diagonal 
$\sigma_0$, and all the $\kappa$'s now live on the base $M_g^1$. 
All in all, we get 
\begin{equation}\label{lemmacompute}
\partial_v ({}^1Z)= \left[(v\cdot_u), \frac{E(-\psi_0)}
	{\psi_0}\right] \circ\Pi\left(Y(\kappa)\right) - {E}(-\psi_0)
	\circ\Pi\left(Y(\kappa)*\int Y(\psi)*\Pi^{-1}E^{-1}(\psi)(v)\right)
\end{equation}
and comparing with formula \eqref{varyZbar} \emph{suggests} a separation 
into two identities, namely (\ref{varyE}.a), with $z=-\psi_0$, and 
(\ref{varyE}.c). However, in order to \emph{prove} the proposition, 
we must:

\begin{itemize}\itemsep0ex
\item  consider $n=1$ in the variation of \eqref{diagZ}, in order to allow the 
insertion of arbitrary arguments in the first operator, in place of $\Pi(Y)$; 
\item justify the splitting of the one resulting identity into two pieces.
\end{itemize}
Taking $n=1$ changes \eqref{lemmacompute} as follows: $Y(\kappa)$ is 
replaced by $Y(\kappa)*\Pi^{-1}E^{-1}(\psi_1)(x_1)$, and an additional term,  
\[
E(-\psi_0)\left(\tilde{Z}\cdot_u\left[\frac{E^{-1}(\psi_1)}{\psi_1}, 
	v\cdot_u\right]\right),
\]
appears from the correction of $\psi_1$ along $\sigma_1$ and from 
the boundary contribution of $\sigma_1$ to $\ol{Z}$, just as explained 
in the case of $\psi_0$. Likewise, \eqref{varyZbar} changes by inserting 
$*\Pi^{-1}E^{-1}(\psi_1)(x_1)$ after $Y(\kappa)$ and $\partial_v{Y}(\kappa)$, 
and by the addition of 
\[
E(-\psi_0)\Pi\left(Y(\kappa)*\partial_v(E\Pi)^{-1}(\psi_1)(x_1)\right).
\]
Splitting the identity into separate ones will now complete the proof. 
This is accomplished by setting the  $\kappa$'s or $\psi$'s, which 
are now independent variables, selectively to zero. \emph{A priori}, 
this leaves a constant term ambiguity. That, however, is resolved by 
noting that the constant term of the first ODE, $\partial_v\Pi\circ\Pi^{-1}$, 
is a skew matrix, whereas the operator $\partial_v{Y}*$ is purely diagonal; 
so  there is no possible mixing of constant terms. 
\end{proof}

\subsection{Flat vacuum preserved.} If $\ol{Z}$ verifies the flat 
vacuum condition (\ref{gwconst}.ii), then the identity vector $\mathbf{1}
\in A_0$ remains the identity in the algebra structure at all $u$: indeed, 
in the formula for ${}_u\ol{Z}{}_0^3(1,a,b)$ in Def.~\ref{zedu}, all integrals 
with $m\neq 0$ vanish, because the integrand is lifted from the lower 
moduli space missing the first marked point:
\[
\ol{Z}{}_0^{3+m}(\mathbf{1},a,b,u,\dots) = 
\varphi^*\ol{Z}{}_0^{2+m}(a,b,u,\dots).
\]
Moreover, each ${}_u\ol{Z}$ then satisfies the flat vacuum condition 
$\varphi^*{}_u\ol{Z}{}_g^{n}(x_1,\dots) = {}_u\ol{Z}{}_g^{n+1}
(\mathbf{1},x_1,\dots)$, because of the ``base change" identity 
\[\begin{split}
\varphi^*\int_{\ol{M}{}_g^{n+m}}^{\ol{M}{}_g^{n}} \ol{Z}{}_g^{n+m}
	(x_1,\dots,x_n,u,\dots,u) &=
	\int_{\ol{M}{}_g^{n+1+m}}^{\ol{M}{}_g^{n+1}} \varphi^*\ol{Z}{}_g^{n+m}
	(x_1,\dots,x_n,u,\dots,u)\\
	&=\int_{\ol{M}{}_g^{n+1+m}}^{\ol{M}{}_g^{n+1}} \ol{Z}{}_g^{n+1+m}
	(\mathbf{1},x_1,\dots,x_n,u,\dots,u)
\end{split}\] 
confirming condition (\ref{gwconst}.ii) term-by-term in the sum 
\eqref{zedu}. Note that it is the \emph{absence} of $\psi$ in $u$ 
which carries the argument here: the vacuum, of course, is not 
preserved by $\BH^+$-translations.

\subsection{Vacuum differential equation.} \label{vacdiffeq}
The ODE's for $Y(z)$ have a 
cleaner, equivalent form in terms of the vacuum vector $\mathbf{v}(z)$ of the 
theory.

\begin{proposition}\label{vacdiff}
At each $u\in U$ and for any $v\in T_uU$, 
$\displaystyle\frac{\partial\mathbf{v}(z)}{\partial{v}} = \frac{v}{z}\cdot_u (\mathbf{1} - 
\mathbf{v}(z)). 
$
\end{proposition}
\begin{proof}
$Y(z)$ and $\mathbf{v}(z)$ are related by $\mathbf{v}(z) = E(z)\Pi\big(Y(z)^{-1}\big)$ 
(Proposition~\ref{flatvac}). Direct computation gives the following (we omit 
the argument $z$, when it is not set to zero):
\begin{align*}
\frac{\partial E\Pi(Y^{-1})}{\partial{v}} &= \frac{\partial{E\Pi}}{\partial{v}}
		(Y^{-1}) + \frac{v - E\Pi\left(Y^{-1}*Y(0)*\Pi^{-1}(v)\right)}{z} \\ 
		&= \frac{\partial{E\Pi}}{\partial{v}}
		(Y^{-1}) + \frac{v - E\left(\Pi(Y^{-1})\cdot{v}\right)}{z} \\
		&= \frac{E\left(v\cdot\Pi(Y^{-1})\right) - v\cdot E\Pi(Y^{-1}) + v - 
		E\left(\Pi(Y^{-1})\cdot{v}\right)}{z}\\
		& = \frac{v - v\cdot\mathbf{v}}{z}, 
\end{align*}
having used \eqref{pimult} and the relation $\Pi(Y(0)) =\alpha$ to convert $*$ 
to the product in $A_u$.
\end{proof}

Proposition~\ref{vacdiff}  provides the following 
formula for $\mathbf{v}(z)$ in terms of derivatives of~$\mathbf{1}$. 
Let $\partial_\mathbf{1}$ be the operator of differentiation, in flat coordinates, 
along the vector field $\mathbf{1}$. 

\begin{corollary}\label{vacfrom1}
$\mathbf{v}(z) = (1+z\partial_\mathbf{1})^{-1}(\mathbf{1}) 
	= \sum\nolimits_k (-1)^k z^k\cdot \partial_\mathbf{1}^k(\mathbf{1}).$ \qed
\end{corollary}
\noindent Thus, $\mathbf{v}$ is determined by the Frobenius manifold, and 
in particular $\mathbf{v}\equiv\mathbf{1}$ if the identity is flat. Conversely, if
$\mathbf{v}(z)\equiv \mathbf{1}$ at some point $u$, then $\partial\mathbf{1}/
\partial{v} =0$ at $u$ for all $v$, by Prop.~\ref{vacdiff}, and induction 
shows the vanishing of all higher derivatives of $\mathbf{1}$.  

\subsection{Homogeneity and the Euler vector field.}\label{eulerfield}
Assume that we are given a vector field $\xi$ on our Frobenius manifold 
$U\subset A$, whose Lie derivative action on $T_uU$ we denote by $\clL$. 
We call $U$ \emph{homogeneous} (or \emph{conformal}) of weight 
$d$ with \emph{Euler vector field} $\xi$ if the ($u$-dependent) 
multiplication operator on $T_uU$ and the quadratic form $\beta$ are 
homogeneous with weights $1$ and $2-d$, respectively. 

Since flat coordinates remain flat under the $\xi$-flow, it follows that 
$\xi$ must be affine-linear in any flat coordinates $x^j$ on $A$:  
\[
\xi = \xi_0 - \mu_j^i \cdot{x}^j\partial_i + (1-d/2) x^j \partial_j.
\]
The matrix $\mu_j^i$ contributes an infinitesimal rotation 
about $0$ in $A$, and the last term is the conformal scaling. The 
action of $\clL$ on the flat frame of vector fields, commonly denoted 
$\ad_\xi$, is given by $\mu + \left(\frac{d}{2}-1\right)\Id$. 

Following Dubrovin, we can reformulate homogeneity by viewing the space of 
sections $\Gamma(U; TU)$ as a Frobenius algebra over the ring $\bC[U]$ 
of functions on $U$. Differentiation by $\xi$ gives a derivation of 
$\bC[U]$, and the shifted operator $\clL^+:= \clL +\Id$ defines a 
compatible derivation of the algebra $\Gamma(U; TU)$. The metric has 
$\clL^+$-weight $(-d)$, and in general the $\clL^+$-weights of the 
basic objects in $A$ are eminently more reasonable than their 
$\clL$-weights, cf.~Table~\ref{basicwts}.

\begin{table}[htdp]
\begin{center}
\begin{tabular}{c|c|c|c}
object & $\clL$-weight & $\clL^+$-weight & reason \\
\hline \hline
product & $1$ & $0$ & definition \\
$\beta$ & $2-d$ & $-d$ & definition\\
$\mathbf{1}\in A$ & $-1$ & $0$ & $\mathbf{1}\cdot{x} = x$ \\
projector $P$ & $-1$ & $0$ & $P\cdot{P}=P$ \\
$\theta_i$ & $-d$ & $-d$ & $\beta(P,P)$ \\
$\theta: A\to \bC$ & $1-d$ & $-d$ & $\beta(\mathbf{1},.)$ \\
$\alpha_u$ & $d-1$ & $d$ & $\theta (x\cdot\alpha) =  \mathrm{Tr}_A(x\cdot)$\\
$(\alpha_u\cdot)$ & $d$ & $d$ & \\
\hline
\end{tabular}
\end{center}
\caption{Some basic weights}
\label{basicwts}
\end{table}%

View now the CohFT data ${}_u\ol{Z}{}_g^n: A^{\otimes n} \to H^{\bullet}
\big(\ol{M}{}_g^{n}\big)$ as a collection of $n$-ary tensor fields  on $U$, 
with values in $H^{\bullet}(\ol{M}{}_g^{n})$. Using the Lie action $\clL^+$ 
and weighting the cohomology of $\ol{M}$ by half the degree, we can extend 
the notion of homogeneity to the entire CohFT:
\begin{definition} \label{cohfthomog}
The CohFT ${}_u\ol{Z}$ is \emph{homogeneous of weight $d$} under the 
vector field $\xi$ if each tensor field $\ol{Z}{}_g^n: (TU)^{\otimes n} \to H^{2\bullet}
(\ol{M}{}_g^{n})$ is $\clL^+$-homogeneous with weight $(g-1)d$. 
\end{definition}

\noindent By considering $g=0$ and the values $n=3$ and $4$, we 
recover the Frobenius manifold homogeneity condition. Conversely, Manin's 
genus-zero reconstruction theorem shows that the latter implies the seemingly 
stronger property \eqref{cohfthomog}, in genus $g=0$, for all $n$.

\begin{example}\label{gromwitextend}
In the Gromov-Witten theories of \S\ref{gromwit}, the series 
\begin{equation}\label{totalgw}
GW_{g,u}^n := \sum_{\delta\in H_2(X;\bZ)}\mathrm{e}^
	{\langle{u}|\delta\rangle}\cdot GW^n_{g,\delta}
\end{equation}
gives a (possibly formal) function on the group $H^2(X;\bC^{\times})$, 
expressed in the Fourier modes $\mathrm{e}^u$. This group is a disjoint 
union of tori, each labelled by a character of the torsion subgroup of 
$H_2(X;\bZ)$. The \emph{divisor equation} (see for instance \cite
{leepan, giv2}) 
\[
\int{GW}^{n+1}_\delta(\dots,u)= 
-\langle{u}|\delta\rangle \cdot {GW}^n_\delta(\dots),\quad
	\text{for}\quad u\in H^2(X),
\] 
where we integrate along the last forgetful map, ensures that the family
${}_u\ol{Z}:= GW_u$ is its own $u$-variation along the $H^2$ torus 
directions, in the sense of Definition~\ref{zedu}. Near any chosen base-point, 
$H^2(X;\bC^{\times})$ can be identified with $U\cap H^2(X;\bC)\subset 
{A}$ by means of a translated exponential map. Subject to convergence, 
we can extend the family $GW_u$ to an open set $U$ of $A = H^{ev}(X)$, 
starting from our base point. If convergence fails, we treat $H^2(X;
\bC^{\times})\times H^{ev,\neq 2}(X)$ as a formal Frobenius manifold. 
The dimension formula \eqref{dimf} for the spaces of stable maps 
ensures that the family $GW_u$ obtained from \eqref{totalgw} is 
homogeneous of weight $d=\dim_\bC{X}$ with respect to the Euler field 
\[
\xi_{GW} = c_1(X) + \sum\nolimits_j \left(1-\frac{\deg(x^j)}{2}\right)
			\frac{\partial}{\partial{x}_j}
\] 
in a homogeneous basis $x^j$ of $H^\bullet(X)$. Thus, $\mu = (\deg-d)/2$. 
\end{example}

We conclude by describing the homogeneity condition in terms of the 
data $E_u, \tilde{Z}_u$. 
\begin{proposition}\label{homogcohft}
In a homogeneous semi-simple CohFT, $E_u(\provomega)$, $\tilde{Z}_u^+$ 
and $\mathbf{v}_u(z)$ are invariant under the shifted Lie action 
$\clL^+$ of the Euler field $\xi$. 
\end{proposition}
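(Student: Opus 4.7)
The plan is to prove the invariance by comparing two evaluations of $\partial_\xi$ applied to the propagator ${}_u^1 Z_g^n$ given by \eqref{diagZ}: one obtained from the homogeneity of Definition~\ref{cohfthomog}, and one obtained by differentiating \eqref{diagZ} directly using the ODEs of Proposition~\ref{varyE} specialized to $v = \xi$.

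First I would unpack the homogeneity condition as a first-order identity: for flat-frame input vectors $v_1,\dots,v_n \in A_u$, $\partial_\xi {}_u^1 Z_g^n(v_1,\dots,v_n)$ equals $\ad_\xi \cdot {}_u^1 Z_g^n(v_1,\dots,v_n) - \sum_i {}_u^1 Z_g^n(v_1,\dots,\ad_\xi v_i,\dots,v_n)$ plus an explicit contribution from the half-degree scaling on $H^\bullet(\ol{M}_g^n)$ with total weight $(g-1)d$. This is the transformation law that $\cL_\xi$ imposes on the propagator. On the other side, differentiating \eqref{diagZ} along $\xi$ and using Proposition~\ref{varyE} decomposes $\partial_\xi {}_u^1 Z_g^n$ into (a) a contribution from $\partial_\xi(E_u\Pi_u) \circ \Pi_u^{-1} = [E_u(\provomega)/\provomega, (\xi\cdot_u)]$ inserted at each marked point and at the output, and (b) a contribution from $\partial_\xi Y_u$ given by the universal-curve integral in the second ODE of Proposition~\ref{varyE}.

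Equating the two expressions and varying $g$, $n$, and the input vectors, the algebraic independence of the $\kappa$- and $\psi$-classes in the stable range (Corollary~\ref{lofree} and the Madsen--Weiss theorem) decouples the identity into independent constraints on $E_u$ and on $Y_u$. The $E_u$-constraint takes the form $\partial_\xi E_u = [\ad_\xi, E_u]$, which is precisely $\cL_\xi E_u = 0$; translating the $Y_u$-constraint back through the relation $Y_u = \Pi_u^{-1}(\alpha_u^{1/2}\cdot\tilde{Z}_u^+)$ and using the known $\cL$-weights of $\Pi_u$ and $\alpha_u^{1/2}$ from Table~\ref{basicwts} yields $\cL_\xi(\tilde{Z}_u^+ \cdot\,) = 0$.

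The main obstacle is the careful identification of the multiplicative commutator $[\,\cdot\,, (\xi\cdot_u)]$ appearing in Proposition~\ref{varyE} with the Lie-theoretic commutator $[\ad_\xi, \cdot\,]$ required by homogeneity: these come from conceptually distinct operations (quantum multiplication by the Euler vector in $A_u$ versus the $\ad_\xi$-action on $T_uU$), but they must conspire to give the same constraint. The reconciliation relies on the interplay between the $\provomega$-grading in $E_u(\provomega)$, the half-degree grading on $H^\bullet(\ol{M}_g^n)$, and the weight bookkeeping in Table~\ref{basicwts}; in particular, the boundary corrections $\sigma_0$, $\sigma_i$ appearing in the derivation of Proposition~\ref{varyE} are exactly what shift these gradings into alignment. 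Once this matching is in place, the decoupling argument and the uniqueness of the propagator formula finish the proof.
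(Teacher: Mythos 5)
Your strategy --- differentiate \eqref{diagZ} along $\xi$ via Proposition~\ref{varyE} and match against the infinitesimal homogeneity condition --- is far more elaborate than what is needed, and as written it stops exactly at the point where the mathematics happens. The comparison you set up does \emph{not} decouple into ``$\partial_\xi E_u = [\ad_\xi,E_u]$'': what Proposition~\ref{varyE} feeds into the left-hand side is $\partial_\xi(E_u\Pi_u)\circ\Pi_u^{-1} = [E_u(\provomega)/\provomega,(\xi\cdot_u)]$, a commutator with quantum multiplication that shifts the $\provomega$-degree by one, while the homogeneity side contributes the half-degree scaling $\Delta$ and the $\ad_\xi$-twists. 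Matching powers of $\psi_0$ produces the recursion $[(\xi\cdot),E_{k+1}]+(\mu+k)E_k=0$ of Proposition~\ref{eulrecursion}, not the statement $\cL(E_u)=0$; to pass from that recursion back to $\cL(E_k)=-kE_k$ you additionally need $\partial_\xi\Pi\circ\Pi^{-1}=-\mu$, i.e.\ the weight $(d/2-1)$ of the normalised canonical frame under the Euler flow. You name this reconciliation as ``the main obstacle'' and then assert that the gradings ``conspire'' and ``shift into alignment'' --- but that assertion \emph{is} the proof, and it is not supplied. In effect your route re-derives Proposition~\ref{eulrecursion} as an unstated lemma and then inverts it.

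The paper's own proof uses none of this: no differentiation in $u$, no ODEs, no decoupling of tautological classes. Homogeneity directly assigns $\cL$-weight $gd$ to the smooth-surface propagator ${}_u^1Z_g^1$ (the count $gd=(g-1)d+2+(d-2)$ accounts for the output leg), and since $(\alpha\cdot)$ has weight $d$ by Table~\ref{basicwts}, the factorisation ${}^1\tilde{Z}{}_g^1=(\alpha^g\tilde{Z}{}^+\cdot)$ forces $(\tilde{Z}_u^+\cdot)$ to have weight $0$; the Leibniz rule applied to ${}_u^1Z_{g,1}=E(-\psi_0)\circ{}^1\tilde{Z}{}_g^1$, whose two known factors already carry the full weight, then gives $\cL(E(-\psi_0))=0$. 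I would encourage you to look for this kind of direct weight count before invoking the differential equations in $u$: the ODEs of Proposition~\ref{varyE} are where the Proposition gets \emph{used} (to produce the reconstruction recursion), not where it is proved.
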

\noindent Recall that $\provomega$ has weight $1$, so we are saying 
that the $\provomega^j$th Taylor coefficient in $E_u$ has weight $(-j)$. 
The same applies to the coefficient $a_j$ of $\kappa_j$ in 
$\log\tilde{Z}^+$. It is not difficult to show that, for a vector field 
$\xi$ of the form in \S\ref{eulerfield}, these conditions are also 
sufficient for homogeneity of $\ol{Z}$, but we will not use that fact.  
\begin{proof}
The operator ${}_u^1Z_g^1$ for smooth surfaces must have weight 
$gd= (g-1)d+2 + (d-2)$, the last term being the added weight of 
replacing an input by an output. In particular, ${}^1\tilde{Z}{}_g^1 
= (\alpha^g \tilde{Z}{}^+\cdot)$ has weight $gd$, whereas $(\alpha\cdot)$ 
has weight $d$; this settles $(\tilde{Z}_u^+\cdot)$. Next, since 
${}_u^1Z_{g,1} = E(-\psi_0) \circ{}^1\tilde{Z}{}_g^1$, 
\[
\clL({}_u^1Z_{g,1}) = \clL(E(-\psi_0))\circ {}^1\tilde{Z}{}_g^1 
	+ E(-\psi_0)\circ \clL({}^1\tilde{Z}{}_g^1),
\]
showing that the first term vanishes, so $\clL(E(-\psi_0))=0$. The 
final statement follows from the relation 
\[
E(z)^{-1}(\mathbf{v}(z)) = \left.(\tilde{Z}^+)^{-1}\right|_{\kappa_j = z^j}.
\]
\end{proof}

\section{Reconstruction}\label{reconst}

I now explain the reconstruction of semi-simple cohomological field 
theories from genus zero data, confirming a conjecture of Givental's 
for Gromov-Witten theory \cite{giv}. In the case of homogeneous theories 
with flat vacuum, I also give a concrete variant which uses less input: 
the Euler vector field plus the Frobenius \emph{algebra} at a single 
semi-simple point of the Frobenius manifold (Theorem~1). This more 
economical recipe is implicit in Dubrovin's paper \cite{dub}. The present 
section is largely a review and adaptation of Givental's relevant work.

\subsection{Reconstruction from the Frobenius manifold: Givental's 
conjecture.}\label{frobreconst}
Let $\mathbf{u}$ be the vector of \emph{canonical coordinates}, for which 
the associated vector fields $\partial/\partial{u}^i$ are the projectors 
$P_i$ in the multiplication at the respective point. As shown in \cite{dub}, 
the existence of such coordinates follows from the integrability 
conditions of \S\ref{frobman}. Clearly, the $u_i$ are unique up to constant 
shifts. In the case of homogeneous Frobenius manifolds, a preferred 
choice of canonical coordinates is given by the eigenvalues of the operator 
$(\xi\cdot_u)$ of multiplication by the Euler vector field $\xi$.

\begin{proposition}\label{Fwith*}\begin{trivlist}\itemsep0ex
\item (i) The linear map $d\mathbf{u}: T_uU \to \bC^N$ is given by 
$\Pi_u^{-1}\circ(\alpha_u^{-1/2}\cdot_u)$.
\item (ii) The system of ODE's in (\ref{varyE}.a) is 
equivalent to 
\[
\frac{\partial{F}}{\partial{v}} = -\frac{(v\cdot_u)}{z}\circ{F},\quad\text{with}
	\quad F(z) = E_u(z)\circ\Pi_u\circ\exp\left(-\frac{\mathbf{u}*}{z}\right).
\]
\end{trivlist}\end{proposition}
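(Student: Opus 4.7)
The plan is to verify both parts by direct algebraic computation in the normalized canonical basis and canonical coordinates, using the explicit form $\Pi_u(e_j) = p_j = \theta_j^{-1/2}P_j$ and the decomposition $\alpha_u = \sum_i \theta_i^{-1}P_i$ in the projector basis of $A_u$.

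For (i), I would evaluate both sides on the projector basis $\{P_j = \partial/\partial u^j\}$ of $T_uU$ and extend by linearity. The left-hand side $d\mathbf{u}(P_j) = e_j$ (the $j$-th standard basis vector of $\bC^N$) is immediate from the definition of canonical coordinates. For the right-hand side, I compute $\alpha_u^{-1/2}\cdot P_j$ using the expansion of $\alpha_u^{\pm 1/2}$ as a sum over projectors, then apply $\Pi_u^{-1}$ via $\Pi_u^{-1}(P_j) = \theta_j^{1/2}e_j$; the $\theta$-factors combine to deliver the formula. A sanity check at $v = \alpha_u^{1/2}$, where $\Pi_u^{-1}(\alpha_u^{1/2}) = \Pi_u^{-1}(\sum_i p_i) = (1,\dots,1)$, pins down any remaining scalar convention.

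For (ii), the crucial operator-level reformulation of (i) is the intertwining
\[
(v\cdot_u)\circ\Pi_u \;=\; \Pi_u\circ\bigl(d\mathbf{u}(v)*\bigr) \qquad \text{as maps } \bC^N\to A_u,
\]
whose content is that $\Pi_u$ simultaneously diagonalises multiplication by every $v\in A_u$ with eigenvalues $\{du^j(v)\}$, i.e. the components of $d\mathbf{u}(v)$. With this in hand, I differentiate $F(z) = E_u(z)\,\Pi_u\,\exp(-\mathbf{u}*/z)$ using the product rule. The outer factor is diagonal in the standard basis of $\bC^N$, so $\partial_v\exp(-\mathbf{u}*/z) = -\bigl(d\mathbf{u}(v)*/z\bigr)\exp(-\mathbf{u}*/z)$, giving
\[
\partial_v F \;=\; \partial_v(E_u\Pi_u)\exp(-\mathbf{u}*/z) \;-\; E_u\Pi_u\bigl(d\mathbf{u}(v)*\bigr)\exp(-\mathbf{u}*/z)/z.
\]
Substituting the first ODE of Proposition~\ref{varyE}, $\partial_v(E_u\Pi_u)\,\Pi_u^{-1} = [E_u/z,\,(v\cdot_u)]$, and using the intertwining to replace $E_u\Pi_u(d\mathbf{u}(v)*)$ by $E_u(v\cdot_u)\Pi_u$, the two $d\mathbf{u}(v)$-contributions cancel and only $-(v\cdot_u)F/z$ survives. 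The converse direction is the same algebraic manipulation run backwards: the displayed ODE for $F$, together with the explicit form of $F$, determines $\partial_v(E_u\Pi_u)\Pi_u^{-1}$ and recovers the ODE of Proposition~\ref{varyE}.

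The main obstacle is careful bookkeeping between the two multiplicative structures on display—pointwise multiplication $*$ on $\bC^N$ and the Frobenius product $\cdot_u$ on $A_u$—which $\Pi_u$ interchanges only up to the correcting factor $\alpha_u^{-1/2}$ of \eqref{pimult}. Once (i) furnishes the exact dictionary between canonical-coordinate differentials and multiplication operators, the equivalence in (ii) collapses to the two-line cancellation above.
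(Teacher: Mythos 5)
Your argument is correct and coincides with the paper's (very terse) proof: part (i) is the defining property of canonical coordinates read off in the normalised basis, and part (ii) is the chain rule combined with the intertwining relation $\Pi_u\circ\bigl(d\mathbf{u}(v)*\bigr) = (v\cdot_u)\circ\Pi_u$, which is precisely the relation the paper invokes. One caveat: if you actually run your computation in (i) with the paper's conventions ($\Pi_u(e_i)=\theta_i^{-1/2}P_i$ and $\alpha_u=\sum_i\theta_i^{-1}P_i$), the $\theta$-factors yield $d\mathbf{u}=\Pi_u^{-1}\circ(\alpha_u^{1/2}\cdot)$ rather than the printed $\alpha_u^{-1/2}$ --- this looks like a sign slip in the paper's exponent (only the corrected version is consistent with \eqref{pimult} and with the intertwining relation you need for (ii)), so the sanity check you propose at $v=\alpha_u^{1/2}$ is worth actually carrying out rather than asserting that the factors ``combine to deliver the formula.''
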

\begin{proof}
The first part merely rewrites the defining property of $\mathbf{u}$: 
$d\mathbf{u}$ takes the projector frame to the standard frame of $\bC^N$.  
For the second claim, use the chain rule and the relation $\Pi\circ
(\frac{\partial\mathbf{u}}{\partial{v}}*) = (v\cdot_u)\circ\Pi$, which 
in turn is a consequence of part~(i) and of formula~\eqref{pimult}. 
\end{proof}

\begin{remark}\label{FwithEuler}\begin{trivlist}\itemsep0ex
\item (i) Letting $\xi = \sum_i u_i\partial/\partial{u}_i$ in canonical 
coordinates, an alternative expression for $F$ is 
\[
F(z) = E_u(z)\circ\exp\left(-\frac{(\xi\cdot_u)}{z}\right)\circ\Pi_u.
\]
In the homogeneous case, $\xi$ is the Euler vector field.
\item (ii) Usually, $E(z)$ does not converge; so $F(z)$ may not belong to 
any symplectic loop group, but only to a thickened version of it (analogous 
to the space of Laurent series infinite in both directions). One such thickening 
can be constructed as a moduli of (twisted) principal $\GL(A)$-bundles over $\bP^1$, 
with formal sections at $0$ and at $\infty$. This variety has no group 
structure, but is a homogeneous space for a (left and a right) loop group 
action, and this suffices to make the ODE meaningful.  

\end{trivlist}\end{remark}

The system of ODE's in Proposition~\ref{Fwith*}.ii is that of 
\cite[pp.1269--1270]{giv}, with the change of notation $\Psi = \Pi$, 
$R(z)=\Pi^{-1}E(z)\Pi$. Recall:
\begin{proposition}[\cite{dub, giv}]
The system of Proposition~\ref{Fwith*}.ii has solutions in which
$R\equiv \Id\pmod{\provomega}$ satisfies the symplectic condition 
$R_u(\provomega)R_u^*(-\provomega)=\Id$. These solutions are unique 
up to right multiplication by a matrix series $H(\provomega) = \exp
\left(H_1\provomega + H_3\provomega^3 +\dots\right)$ with constant 
\emph{diagonal} matrices $H_{2i+1}$. In the homogeneous case, there 
is a \emph{unique} solution with $R$  invariant under the Euler 
field. \qed
\end{proposition}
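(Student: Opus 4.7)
The plan is to solve the ODE system of Proposition~\ref{Fwith*}(ii) order-by-order in the Taylor expansion $E(z) = \sum_{k\ge 0} E_k z^k$, impose the symplectic condition to cut down the ambiguity, and finally use Euler-field invariance to single out a unique solution in the homogeneous case.

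First I would unpack $F = E\circ\Pi\circ\exp(-\mathbf{u}*/z)$ and use the identity $\Pi\circ(\partial_v\mathbf{u}*) = (v\cdot_u)\circ\Pi$ to rewrite the ODE as the first equation of Proposition~\ref{varyE}, then extract the coefficient of $z^k$, obtaining the recursion
\[
[E_{k+1},\,(v\cdot)] = \partial_v E_k + E_k\cdot(\partial_v\Pi)\Pi^{-1},\qquad E_0 = \Id.
\]
In the semi-simple regime, choosing $v$ so that $(v\cdot)$ has simple spectrum in the normalised canonical basis shows that $\ad_{(v\cdot)}$ is a bijection on off-diagonal matrices and annihilates the diagonal ones; hence each $E_{k+1}$ is determined up to a diagonal part $D_{k+1}$, and taking the diagonal of the recursion one level higher yields $\partial_v D_{k+1} = 0$, so $D_{k+1}$ is $u$-constant. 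Solvability at each step reduces to membership of the right-hand side in the image of $\ad_{(v\cdot)}$, which, as $v$ varies, is exactly the off-diagonality of $(\partial_v\Pi)\Pi^{-1}$ in the canonical frame — a consequence of $\Pi$ being an isometry, which itself is a manifestation of the WDVV relations.

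Next I would expand the symplectic condition $E(z)E^*(-z)=\Id$ by degree: the $z^{2m}$-coefficient determines the symmetric part of $E_{2m}$ (in particular its diagonal $D_{2m}$) from lower $E_j$, while the $z^{2m+1}$-coefficient determines the antisymmetric part of $E_{2m+1}$ and leaves $D_{2m+1}$ free, since diagonal matrices are $*$-symmetric in the canonical basis. The residual ambiguity in $F$ is right multiplication by a $u$-constant matrix series $G(z)$, forced to be diagonal by commutation with $\exp(-\mathbf{u}*/z)$; the identity $R = \Pi^{-1}E\Pi$ then propagates this to $R \mapsto R\cdot G$. The symplectic constraint on $G$ reads $G(-z) = G(z)^{-1}$ for diagonal $G$, which is equivalent to $G(z) = \exp(H_1 z + H_3 z^3 + \cdots)$ with $H_{2m+1}$ diagonal and $u$-constant, as asserted.

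For the homogeneous statement, impose $\cL_\xi E(z)=0$; by Proposition~\ref{homogcohft}, each $E_k$ must be $\cL^+$-homogeneous of weight $-k$. An ambiguity $H_{2m+1}$ is $u$-constant and diagonal, so its $\cL$-weight is controlled by the diagonal part of $\mu$, which for a semi-simple homogeneous Frobenius manifold acts with generically distinct eigenvalues (the canonical coordinates are the distinct eigenvalues of $(\xi\cdot)$). Matching the required weight $-(2m+1)\ne 0$ for $m\ge 0$ forces each diagonal entry of $H_{2m+1}$ to vanish, so the invariant solution is unique. The main obstacle I foresee is the compatibility check in the second step — that the diagonal part of $E_{2m}$ prescribed at even order by symplecticity is automatically $u$-constant, as already demanded by the ODE. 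This is a bookkeeping induction starting from $E_0=\Id$ rather than a conceptual difficulty, but it is the point where the semi-simple structure and the symplectic condition have to dovetail cleanly.
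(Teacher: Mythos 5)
The paper does not actually prove this proposition: it is quoted from Dubrovin and Givental \cite{dub, giv}, with only the remark that the proof is ``closely related to the concrete reconstruction procedure'' of Theorem~\ref{gwreconst}. Your order-by-order argument is precisely that standard route, and its skeleton is sound: extracting the $z^k$-coefficient of the ODE in Proposition~\ref{Fwith*}.ii gives $[E_{k+1},(v\cdot)]=\partial_vE_k+E_k(\partial_v\Pi)\Pi^{-1}$, which determines the off-diagonal part of $E_{k+1}$ by inverting $\ad_{(v\cdot)}$; the diagonal of the next-order equation controls the diagonal of $E_{k+1}$; and the symplectic condition kills the even-order integration constants while leaving the odd, diagonal ones free, exactly as in the statement. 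Three caveats. First, your ``$\partial_vD_{k+1}=0$'' is only correct if $D_{k+1}$ means the \emph{difference} of two solutions agreeing to order $k$: the diagonal of $E_{k+1}$ itself is not $u$-constant, since the diagonal of the level-$(k+1)$ equation reads $\partial_v\bigl(\mathrm{diag}\,E_{k+1}\bigr)=-\mathrm{diag}\bigl(E_{k+1}(\partial_v\Pi)\Pi^{-1}\bigr)$, whose right side is generally nonzero; only for a diagonal perturbation $D$ does the extra term $\mathrm{diag}\bigl(D(\partial_v\Pi)\Pi^{-1}\bigr)$ vanish. Second, the weight count in the homogeneous case is garbled: the relevant fact is that a $u$-constant diagonal $H_{2m+1}$, transported to the operator $\Pi H_{2m+1}\Pi^{-1}$, has $\cL$-weight $0$ (because $\partial_\xi\Pi\circ\Pi^{-1}=-\mu$ cancels the $[\mu,\cdot]$ term), while Euler invariance of $E$ forces the $z^{2m+1}$-coefficient to have weight $-(2m+1)\neq0$; neither the eigenvalues of $(\xi\cdot)$ nor the (vanishing) diagonal of $\mu$ enters the way you suggest. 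Third, the genuine analytic content --- integrability of the system in $u$ (flatness of the first structure connection, i.e.\ WDVV) and the consistency of the symplectically prescribed even-order diagonal with the ODE --- is asserted rather than proved in your sketch; since the paper itself delegates exactly these points to \cite{dub, giv}, this is acceptable in context, but it is where the existence half of the statement actually lives.
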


\noindent The proof of the proposition, for which we refer to Givental 
\cite{giv}, is closely related to the reconstruction procedure 
we will give below, in the homogeneous case. The ambiguity in $R$ reflects 
the possibility of a $z$-dependent shift in the canonical coordinates; the 
parity constraint comes from the symplectic condition. In terms of $E$, 
this ambiguity is the right composition with the operator 
of multiplication by a ``symplectic'' unit in $A\bbrak{\provomega}$. 
Note that Euler invariance of $R$ and $E$ are equivalent because of the 
relation $\clL(\Pi) = (d/2 -1)\Pi$. 

\begin{corollary}
A semi-simple homogeneous CohFT is determined from its Frobenius manifold,  
by the unique Euler-invariant solution $E$ of the ODE \eqref{varyE} and 
the vacuum \eqref{vacfrom1}.  
 \qed
\end{corollary}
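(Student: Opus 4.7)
The strategy is to reduce the corollary to two observations: first, that a semi-simple CohFT with flat identity is entirely determined by $E$ at a single point; second, that the Euler-invariant solution at $u=0$ is uniquely pinned down by the Frobenius manifold.

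First I would use Theorem~1 together with the flat-identity analysis of \S\ref{flatidagain}: for a CohFT with flat identity, the group element acting on the trivial theory is the $T_\provomega$-conjugate of $\widehat{E}$, so the entire theory is encoded in $E(\provomega)\in\BSp_L^+$. The nodal co-form $W$ is fixed by $E$ through \eqref{wfrome}, and $\tilde{Z}{}^+$ is fixed by $E$ through \eqref{zfrome}. At the base point $u=0$ the algebra $A$ is recovered from the tangent Frobenius structure of the manifold at $0$, and the family $\ol{Z}{}_u$ for $u\neq 0$ is recovered from $\ol{Z}{}_0$ by the translation construction of Definition~\ref{zedu}. So the whole CohFT is determined once we know $E_0$.

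Next I would show that the Frobenius manifold plus the homogeneity datum determines $E_0$. By Proposition~\ref{Fwith*}(ii), the first ODE of Proposition~\ref{varyE} is equivalent, in canonical coordinates $\mathbf{u}$, to the Givental system
\[
\frac{\partial F}{\partial v}=-\frac{(v\cdot)}{\provomega}\circ F,\qquad F(\provomega)=E_u(\provomega)\circ\Pi_u\circ\exp\!\left(-\mathbf{u}*/\provomega\right),
\]
and all its coefficients are read off the Frobenius manifold. The cited existence-and-uniqueness result (Dubrovin--Givental) produces symplectic solutions $R\equiv\Id\pmod\provomega$, uniquely up to right multiplication by an odd-diagonal series $H(\provomega)$. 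In the homogeneous case, Proposition~\ref{homogcohft} asserts that $E_u$ (equivalently $R_u$, via $R=\Pi^{-1}E\Pi$ and $\cL(\Pi)=(d/2-1)\Pi$) is $\cL$-invariant; this rigidity cuts down the $H$-ambiguity to a single invariant solution. Specialising to $u=0$ (a semi-simple point by hypothesis, possibly after a small translation that does not change the resulting theory), this furnishes a unique $E_0$.

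Finally, combining the two steps: from the Frobenius manifold one extracts the ODE, homogeneity picks out the unique invariant $E_0=R_0$, and from $E_0$ one reconstructs $\tilde{Z}{}^+$ via \eqref{zfrome}, the CohFT $\ol{Z}_0$ via Theorem~1, and the full family $\ol{Z}_u$ via Definition~\ref{zedu}. The main obstacle is the second step: namely, verifying that the Euler-invariance statement of Proposition~\ref{homogcohft} for $E$ matches precisely the invariance condition used in the Dubrovin--Givental uniqueness theorem for $R$, so that the $H$-ambiguity is genuinely killed. Once the conversion $R=\Pi^{-1}E\Pi$ and the formula of Remark~\ref{FwithEuler}(ii) are unpacked, this is a bookkeeping check on weights (as tabulated in Table~\ref{basicwts}), but it is the only place where homogeneity truly enters the argument.
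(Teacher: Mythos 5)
Your proposal is correct and follows essentially the same route the paper intends: the corollary is stated with \qed because it is the immediate combination of Theorem~1 plus the flat-identity analysis (the theory is encoded in $E$, with $\tilde Z^+$ and $W$ determined by \eqref{zfrome} and \eqref{wfrome}), Proposition~\ref{homogcohft} (homogeneity forces $\cL$-invariance of $E$), and the Dubrovin--Givental uniqueness of the invariant solution $R$, with the bridge $R=\Pi^{-1}E\Pi$ and $\cL(\Pi)=(d/2-1)\Pi$ handling the equivalence of the two invariance conditions exactly as you describe.
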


\subsection{Ambiguity for inhomogeneous theories.} \label{ambiginhom}
The inhomogeneous theories corresponding to a given semi-simple 
Frobenius manifold are related geometrically by  \emph{Hodge bundle twists}. 
More precisely, let  $\mu_j = \mathrm{ch}_j \Lambda$ be the Chern components 
of the Hodge bundle~$\Lambda$, whose fibres are the spaces of global 
differentials along the universal curve, with simple poles allowed at 
the marked sections~$\sigma_i$. Recall that the classes~$\mu_j$ vanish 
for even $j$. We can construct a \emph{Hodge CohFT} based on $A$ by 
choosing any odd power series $h(\provomega)= \sum_j h_{2j-1}
\provomega^{2j-1}$ in $A\bbrak{\provomega}$ and setting
\[
{}^n\ol{Z}{}_g[h] := \text{($n$th co-power of) } \alpha^g\cdot
	\exp\Big\{\sum\nolimits_j h_{2j-1}\frac{(2j)!}{B_{2j}}\cdot\mu_{2j-1}\Big\},
\]
with the Bernoulli numbers 
$B_{2j}$. Basic properties of the Hodge bundle ensure that $\ol{Z}[h]$ 
is a CohFT with flat vacuum: namely, $\Lambda$ is primitive\footnote{When 
there are no marked points, we must normalise the bundle by virtually 
subtracting a trivial line.} under restriction to 
the boundaries of $\ol{M}{}_g^n$, and changes under forgetful pull-back by 
the addition of a trivial line. Note, in addition, that ${}^n\ol{Z}{}_0[h]$ 
is the trivial genus-zero theory on $A$, because the Hodge bundle $\Lambda$ is 
trivial there. Givental's calculation in \cite[\S2.3]{giv}, summarised in 
Part (i) of the next proposition (and re-derived below), identifies the theory for us. 

\begin{proposition}\label{ambig}
\begin{trivlist}\itemsep0ex
\item (i) The theory $\ol{Z}[h]$ is the transform $T_\provomega^{-1}
\circ\exp h(\provomega)\circ {T}_{\provomega}(I_A)$ of the trivial 
$A$-theory.
\item (ii) All cohomological Field theories with flat vacuum based 
on a fixed semi-simple, pointed Frobenius manifold are classified by 
matrices $E\circ\exp{h}(z)\in \BSp^+_L$, with arbitrary $h$ but the \emph{same} $E$. 
That is, they have the form $T_\provomega^{-1}E(\provomega)\exp h
(\provomega){T}_{\provomega}(I_A)$, with a fixed $E$. \qed
\end{trivlist}\end{proposition} 

\noindent
Let us revisit the flat vacuum condition \eqref{zfrome} in light 
of statement (i). Over $M_g$, we find 
\[
\sum\nolimits_j h_{2j-1}\frac{(2j)!}{B_{2j}}\cdot\mu_{2j-1} =
	\sum\nolimits_j h_{2j-1}\kappa_{2j-1},
\]
recovering the Riemann-Roch identities $\mu_{2j-1} = \frac{B_{2j}}{(2j)!}
\cdot\kappa_{2j-1}$ over $M_g$. These identities, in turn, prove 
statement~(i), because $\tilde{Z}$ determines $E$ when the latter is 
a multiplication operator in the Frobenius algebra and $\mathbf{v}\equiv \mathbf{1}$. 

\subsection{Rank one theories: a conjecture of Manin and Zograf.}\label{manzogsect}
When $A$ has rank $1$, we can give a closed formula for all possible CohFT's 
(which are necessarily semi-simple). 

Taking logarithms converts the FTFT factorisation axiom for the classes 
$\ol{Z}{}_g^n$ into the primitivity condition. Manin and Zograf 
conjectured in \cite{manzog} that the $\kappa_j$ ($j\ge 0$) and the 
$\mu_j$ ($j>0$, odd) were the only primitive 
classes on the $\ol{M}{}_g^n$; consequently, they proposed that any 
rank $1$ theory should  have the form 
\begin{equation}\label{manzogfor}
{}^n\ol{Z}{}_g = \exp\left\{\sum\nolimits_{j\ge 0} a_j\kappa_j + 
	\sum\nolimits_{j>0} b_{2j-1}\mu_{2j-1} \right\}\cdot \exp(a_0)^{\otimes n},
\end{equation}
for freely chosen constants $a_j, b_j \in \bC$. (Note that $\exp(a_0)$ is the 
normalised canonical vector.)
\begin{proposition}\label{manzogconj}
Formula \eqref{manzogfor} describes all possible rank one CohFT's. 
Flat vacuum theories are those with $a_j =0$ for $j>0$. 
\end{proposition}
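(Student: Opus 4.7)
The plan is to combine Theorem~1, specialised to $A=\bC$, with a direct identification of the resulting classes in the $(\kappa,\mu)$-basis on $\ol{M}{}_g^n$, and then to read off the flat identity condition by comparing the $\kappa$-pullback formula of \S\ref{tautrefresh} against \eqref{flatid}.

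First, I would specialise the classification data of Theorem~1 to rank one. Here $\BSp^+_L$ is abelian: an element is a scalar series $E(\provomega)\in 1+\provomega\bC\bbrak{\provomega}$ satisfying $E(\provomega)E(-\provomega)=1$, hence uniquely of the form $E(\provomega)=\exp\{\sum_{j\ge 1}h_{2j-1}\provomega^{2j-1}\}$ for some odd exponent. Together with $\tilde{Z}{}^+=\exp\{\sum_{j>0}a_j\kappa_j\}$ and the normalisation $a_0=\tfrac12\log\alpha$ from \S\ref{classif}.i, this supplies free parameters $\{a_j\}_{j\ge 0}$ and $\{h_{2j-1}\}_{j\ge 1}$, in evident bijection with the coefficients appearing in \eqref{manzogfor} (the $h_j$ being supported on odd $j$).

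Next, I would check that every expression of the form \eqref{manzogfor} does define a rank one CohFT. The classes $\kappa_j$ and $\mu_{2j-1}$ are primitive under the nodal gluing morphisms of \eqref{dmop}: the $\kappa$-case is recorded in \S\ref{tautrefresh}, and for $\mu$ the primitivity reflects the standard behaviour of the Hodge bundle under normalisation of a node. Exponentiating turns additive primitivity into the multiplicative factorisation $b_i^*\ol{Z}=\ol{Z}'\otimes\ol{Z}''$ required of a rank one CohFT with $D=\Id$. To see that we obtain \emph{all} rank one CohFTs this way, I match against the parameters of Theorem~1: Givental's formula of Proposition~\ref{ambig}.i identifies the $h_{2j-1}$ of $E$ with the Hodge-class coefficients through the Grothendieck-Riemann-Roch factor $(2j)!/B_{2j}$, while the $a_j$ of $\tilde{Z}{}^+$ supply the $\kappa_j$-coefficients; the $\psi$-insertions $E(\pm\psi_i)$ at marked points commute with everything else and, in rank one, can be repackaged via the relation $\kappa_j^{n+1}=\varphi^*\kappa_j^n+\psi_{n+1}^j$.

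Finally, for the flat identity clause, I compute $\varphi^*\ol{Z}{}_g^n$ using the pullback identities $\varphi^*\kappa_j=\kappa_j-\psi_{n+1}^j$ and $\varphi^*\mu_j=\mu_j$ (the Hodge bundle being pulled back under $\varphi$). In rank one the flat identity condition \eqref{flatid} reduces to $\varphi^*\ol{Z}{}_g^n=\ol{Z}{}_g^{n+1}$, which after taking logarithms becomes
\[
\sum_{j>0}a_j\,\psi_{n+1}^j = 0 \quad\text{in}\quad H^\bullet(\ol{M}{}_g^{n+1};\bQ).
\]
Algebraic independence of the powers $\psi_{n+1}^j$ in the tautological ring then forces $a_j=0$ for every $j>0$, and conversely these vanishings are clearly sufficient. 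The $a_0\kappa_0$ contribution is just the $(g,n)$-dependent normalisation $\theta^{\chi/2}$ and is handled separately. The main obstacle I expect is reconciling the $\psi$-insertions $E(\pm\psi_i)$ present at every marked point in the smooth-surface formula of \S\ref{freebd} with their absence from \eqref{manzogfor}: tracking the Grothendieck-Riemann-Roch constants $(2j)!/B_{2j}$ from Proposition~\ref{ambig}.i and verifying that the $\psi$-classes really do repackage into $\kappa$'s plus Hodge classes with no residue is the delicate bookkeeping, and it works only because $\BSp^+_L$ is abelian in rank one.
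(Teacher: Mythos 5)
Your proposal is correct and follows essentially the same route as the paper: specialise Theorem~1 to rank one, use the symplectic condition to force $E=E_h$ with odd exponent, let the translation $\zeta$ (equivalently $\tilde{Z}{}^+$) account for the $\kappa_j$ and Proposition~\ref{ambig}.i for the $\mu_j$, and then extract the flat-identity constraint. The only difference is cosmetic --- you derive $a_j=0$ by a direct forgetful-pullback computation ($\varphi^*\kappa_j=\kappa_j-\psi_{n+1}^j$, $\varphi^*\mu_j=\mu_j$) where the paper cites \eqref{zfrome} and Proposition~\ref{ambig}.i --- and you are right to flag the repackaging of the $E(\pm\psi_i)$ insertions into $\kappa$- and Hodge classes as the one piece of bookkeeping that neither argument spells out in full.
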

\begin{proof}
The symplectic condition forces the element $E(\provomega)$ in our 
classification to have the form $\exp h(z)$. A general translation vector 
$\zeta$ in our classification inserts unrestricted $\kappa$-class 
combinations in \eqref{manzogfor}, but the flat vacuum 
condition fixes the $a_j$ to be zero.
\end{proof}

\subsection{Classification of homogeneous CohFT's.} 
Since the family ${}_u\ol{Z}$ of theories is constructed from its special 
value at $u=0$, we can describe the homogeneity condition in terms 
of the Euler field 
\[
\xi = \xi_0 -\mu_j^ix^j \partial_i +  (1-d/2)x^i\partial_i
\] 
and the classification datum $E$; as always, $(\xi_0\cdot)$ denotes the 
operator of multiplication by the (constant vector) $\xi_0$ in $A$. We focus 
on the important special case of flat vacuum theories, and show that 
they are completely determined by the Frobenius algebra structure and 
the Euler field. 

\begin{proposition}\label{eulrecursion}
The CohFT  $\ol{Z}$ with flat vacuum defined by $E$ is homogeneous 
of weight $d$ for $\xi$ iff
\[
\mu(\mathbf{1})= - \frac{d}{2}\cdot\mathbf{1} \quad\text{ and }\quad
\left[(\xi_0\cdot), E_{k+1}\right] + (\mu+k)E_k = 0.
\]
\end{proposition}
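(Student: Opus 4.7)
The plan is to translate the homogeneity hypothesis (which by Proposition~\ref{homogcohft} says that $E_u(\provomega)$ is $\cL$-invariant with $\provomega$ of weight $1$) into the stated recursion by combining it with the structure ODE of Proposition~\ref{varyE}, specialised at $u=0$, so as to produce a meromorphic ODE in $\provomega$ whose Taylor coefficients are the recursion.

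For the identity condition, the flat-identity vector $\mathbf{1}\in A$ must have $\cL$-weight $-1$ in a weight-$d$ theory (Table~\ref{basicwts}), because multiplication by $\mathbf{1}$ is the identity operator, hence $\cL$-trivial. Writing $\mathbf{1}=\partial_1$ in flat coordinates and differentiating $\xi^i=\xi_0^i-\mu^i_j x^j+(1-d/2)x^i$ yields $\cL_\xi \partial_1=[\xi,\partial_1]=(\mu+(d/2-1)\Id)(\partial_1)$, and matching with $-\mathbf{1}$ forces $\mu(\mathbf{1})=-(d/2)\mathbf{1}$.

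For the recursion itself, homogeneity gives $\cL_\xi E_u(\provomega)+\provomega\partial_\provomega E_u(\provomega)=0$, i.e.\ in the flat frame
\[
\xi\cdot\partial E + [\mu,E] + \provomega\partial_\provomega E = 0,
\]
using $\cL_\xi E = \xi\cdot\partial E+[\mu,E]$ for a $(1,1)$-tensor. Evaluating at $u=0$ (where $\Pi_0=\Id$) and substituting $\partial_{\xi_0}E|_0 = [E(\provomega)/\provomega,\xi_0\cdot_0]-E(\provomega)[E_1,\xi_0\cdot_0]$ from Proposition~\ref{varyE}, then multiplying through by $-\provomega$, produces
\[
[\xi_0\cdot, E(\provomega)] - \provomega^2\partial_\provomega E - \provomega[\mu,E] = -\provomega E(\provomega)\bigl([E_1,\xi_0\cdot]+\mu\bigr).
\]
When the right-hand bracket vanishes, the left side becomes the meromorphic ODE $[\xi_0\cdot, E]=\provomega^2\partial_\provomega E+\provomega\mu E$ at $u=0$, and extracting the $\provomega^{k+1}$-coefficient recovers exactly the stated recursion.

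The main obstacle is the base case $[\xi_0\cdot, E_1]+\mu = 0$ (the $k=0$ instance): Euler invariance of $E_k$ for $k\ge 1$ alone only yields the consistency family $E_k\bigl([E_1,\xi_0\cdot]+\mu\bigr)=0$ for $k\ge 1$. I would close this gap by invoking the flat-identity relation $E_1(\mathbf{1})=a_1$ coming from the $\provomega^1$-expansion of \eqref{zfrome}; applying $\cL_\xi$ to both sides and using $\cL_\xi\mathbf{1}=-\mathbf{1}$ together with the first step pins down the action of $[E_1,\xi_0\cdot]+\mu$ on $\mathbf{1}$, and the analogous weight analysis of $E_1(v)$ for arbitrary constant $v$, combined with the Frobenius module structure at $u=0$, upgrades this to an operator identity on all of $A$. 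For the converse, the recursion together with $\mu(\mathbf{1})=-(d/2)\mathbf{1}$ determines $E$ uniquely by induction from $E_0=\Id$, and the Dubrovin--Givental uniqueness statement quoted before Proposition~\ref{Fwith*} identifies the resulting $E_u(\provomega)$ as the unique Euler-invariant solution, so Proposition~\ref{homogcohft} returns a homogeneous CohFT of weight $d$.
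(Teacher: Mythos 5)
Your reduction of the recursion to the structure ODE of Proposition~\ref{varyE} is the right starting point, and your derivation of $\mu(\mathbf{1})=-\tfrac{d}{2}\mathbf{1}$ matches the paper's. But the necessity direction has a genuine gap exactly where you flag it. Substituting $\partial_{\xi_0}\Pi\circ\Pi^{-1}=[E_1,(\xi_0\cdot)]$ leaves the defect term $E_k\bigl(\mu+[E_1,(\xi_0\cdot)]\bigr)$, and your proposed closure via the flat-identity relation $E_1(\mathbf{1})=a_1$ cannot work as stated: applying $\cL$ to that relation at best pins down the action of $\mu+[E_1,(\xi_0\cdot)]$ on the single vector $\mathbf{1}$, and the claimed ``upgrade to an operator identity on all of $A$ via the Frobenius module structure'' has no content, since $E_1$ is not a module map and a general constant vector $v$ has no definite $\cL$-weight. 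The missing ingredient is the Euler-homogeneity of the normalised canonical frame itself: from Table~\ref{basicwts}, $P_i$ has $\cL$-weight $-1$ and $\theta_i$ has weight $-d$, so $p_i=\theta_i^{-1/2}P_i$ has weight $d/2-1$, i.e.\ $\cL(\Pi)=(d/2-1)\Pi$. Comparing with $\cL(\Pi)=\partial_\xi\Pi+\mu\circ\Pi+(d/2-1)\Pi$ gives $\partial_\xi\Pi\circ\Pi^{-1}=-\mu$ outright, which is precisely your missing base case $\mu+[E_1,(\xi_0\cdot)]=0$; with this substitution the recursion falls out for every $k\ge 0$ in one line, which is how the paper proceeds.

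The converse is also not established by your argument. Proposition~\ref{homogcohft} is a one-way implication (the paper explicitly declines to prove its converse), and Definition~\ref{cohfthomog} demands $\cL^+$-homogeneity of the entire family ${}_u\ol{Z}{}_g^n$ of cohomology classes, not merely Euler-invariance of $E$ at $u=0$; invoking the Dubrovin--Givental uniqueness of $R$ does not bridge that. The paper's converse has two substantive steps you omit: first, running the same calculation backwards to show the two conditions force $\cL({}_u\ol{Z}{}_g^n)\big|_{u=0}=(gd-d+n)\ol{Z}{}_g^n$; second, a propagation argument using the translation formula of Definition~\ref{zedu} (contracting with $\iota(\xi_0-\ad_\xi(u))$ and re-indexing the sum over insertions) to show that homogeneity at $u=0$ formally implies homogeneity at every $u$. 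Without both, the ``if'' direction is unproved.
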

\begin{remark}\begin{trivlist}\itemsep0ex
\item (i) Without the flat vacuum assumption, the first equation must be 
replaced by the differential equation
\[
\frac{d\mathbf{v}(z)}{dz} + \frac{\mu+d/2}{z}\big(\mathbf{v}(z)\big) = 
			\frac{\xi_0}{z^2}\cdot\left(\mathbf{v}(z)-\mathbf{1}\right).
\]
The calculation follows the same steps as the proof of the proposition.
At a generic point where $\xi_0$ is invertible in the algebra (that is, 
away from the canonical coordinate axes), the Taylor coefficients of 
$\mathbf{v}$ are recursively determined by this equation.
\item (ii) The second recursion is equivalent to an ODE for the 
expression $F(z)$ of Remark~\ref{FwithEuler}.ii, 
\[
\frac{dF}{dz} + \frac{\mu}{z}\circ{F} = \frac{(\xi\cdot)}{z^2}\circ{F}.
\]
(iii) For $k=0$, we find $\mu= [E_1, (\xi\cdot)]$. When $(\xi\cdot)$ 
has repeated eigenvalues (on the big diagonal in canonical coordinates), 
solvability of this equation places constraints on $\mu$. 
In a general Frobenius manifold, one can expect semi-simplicity to  
fail on the big diagonal. However, the requisite constraint on $\mu$ 
\emph{must} hold at all \emph{semi-simple} diagonal points, because 
the solution $E_u$ exists there.  
\end{trivlist}\end{remark}

\begin{proof}
First, $\mathbf{1} = -\clL(\mathbf{1}) = -\partial(\mathbf{1})/
\partial\xi -\mu(\mathbf{1}) +(1-d/2)\cdot\mathbf{1}$; flatness of 
$\mathbf{1}$, $\partial(\mathbf{1})/\partial\xi =0$, gives the first 
relation. Next, $\clL(E_k) = -kE_k$ from Proposition~\ref{homogcohft}. But 
\[
 \clL(E_k) = \frac{\partial{E}_k}{\partial\xi} +\mu\circ{E}_k - E_k\circ\mu,
\]
whereas according to equation~(\ref{varyE}.a),
\[
 \frac{\partial{E}_k}{\partial\xi} = [E_{k+1}, (\xi\cdot)] - 
	E_k\circ \frac{\partial\Pi}{\partial\xi}\circ\Pi^{-1}.
\]
The normal canonical frame $\Pi$ scales with weight $(d/2-1)$ under 
the Euler flow; since
\[
\clL(\Pi) = \frac{\partial\Pi}{\partial\xi} + \mu\circ\Pi +(d/2-1)\Pi,
\]
we have $\partial_\xi\Pi\circ\Pi^{-1} = -\mu$ and combining the equations 
proves necessity of the conditions:
\[
 -kE_k = [E_{k+1}, (\xi\cdot)] - E_k \circ\partial_\xi(\Pi)\circ\Pi^{-1} 
 	+\mu\circ{E}_k - E_k\circ\mu = [E_{k+1}, (\xi\cdot)] + \mu\circ{E}_k.
\]

Conversely, the same calculations show that the two conditions imply 
the $\clL$-homogeneity of ${}_u\ol{Z}{}_g^n$ to first order at $u=0$, 
\[
\clL({}_u\ol{Z}{}_g^n)\Big|_{u=0} = (gd-d + n)\ol{Z}{}_g^n. 
\]
We now check that Euler homogeneity at any other point is a formal 
consequence. Recall from \S\ref{eulerfield} the action $\ad_\xi$ of 
$\clL$ on the flat frame of $TU$ and its multi-linear extension to 
tensors. Also, denote by $\Delta$ half the degree operator on 
$H^\bullet(\ol{M})$; it was implicit in Definition~\ref
{cohfthomog}. At a point $u$, $\xi$ has the value $\xi_u = \xi_0 - 
\ad_\xi(u)$ and 
\[
(\clL-\Delta)({}_u\ol{Z}{}_g^n) = 
	\partial_{\xi_u}({}_u\ol{Z}{}_g^n) 
		- {}_u\ol{Z}{}_g^n\circ\ad_\xi
	= \int_{\ol{M}{}_g^{n+1}}^{\ol{M}{}_g^n} \iota(\xi_0-\ad_\xi(u))
	{}_u\ol{Z}{}_g^{n+1} - {}_u\ol{Z}{}_g^n\circ\ad_\xi.
\]
Substitute now formula \eqref{zedu} for ${}_u\ol{Z}{}$, this becomes
\[
-\sum_m \frac{(-1)^m}{m!}\int_{\ol{M}{}_g^{n+m+1}}^{\ol{M}{}_g^n} 
	\left(\iota(u)^m\iota(\xi_0)\ol{Z}{}_g^{n+m+1} - 
	\iota(u)^m\iota(\ad_\xi(u))\ol{Z}{}_g^{n+m+1}\right)  - 
	{}_u\ol{Z}{}_g^n\circ\ad_\xi,
\]
and shifting the summation index $m\mapsto m+1$ in the second term 
of the sum converts this into  
\[
\sum_m \frac{(-1)^m}{m!}\int_{\ol{M}{}_g^{n+m}}^{\ol{M}{}_g^n} 
	\iota(u)^m\left(\partial_{\xi_0}\ol{Z}{}_g^{n+m} -
		\ol{Z}{}_g^{n+m}\circ\ad_\xi\right)
\]
By homogeneity at $u=0$, the integrand is $\iota(u)^m(\clL-\Delta)
\ol{Z}{}_g^{n+m} = \iota(u)^m(gd-d+n+m-\Delta)\ol{Z}{}_g^{n+m}$. 
Pulling $\Delta$ through the integral gives $(gd-d+n-\Delta){}_u
\ol{Z}{}_g^{n}$, proving homogeneity at $u$.
\end{proof}

\subsection{GW invariants from quantum cohomology.} 
As we now explain, Proposition~\ref{eulrecursion} determines 
$E$ from $A, \xi_0$ and $\mu$. In Gromov-Witten theory, we have:
\begin{theorem} \label{gwreconst}
 The Gromov-Witten classes $GW{}^n_{g,d}\in H^{ev}(\ol{M}{}_g^n)$ 
 of a compact symplectic manifold are uniquely determined by its 
 first Chern class and by the quantum multiplication law at any \emph
 {single} semi-simple point.
\end{theorem}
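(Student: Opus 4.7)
The plan is to combine the classification from Theorem~1 with the homogeneity recursion of Proposition~\ref{eulrecursion} and the integrable-system reconstruction of the Frobenius manifold from local data. Gromov--Witten theory of a compact symplectic $X$ is a semi-simple CohFT with flat identity, homogeneous of weight $d=\dim_{\bC}X$ under the Euler field $\xi = c_1(X) + \sum_j (1-\deg(x^j)/2)\,x^j\partial_j$. By Theorem~1 and Proposition~\ref{ambig}, the homogeneity constraint removes the residual Hodge-twist ambiguity, so the entire CohFT is classified by the underlying Frobenius manifold together with the symplectic matrix series $E(z)\in \BSp_L^+$.

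Fix the given semi-simple point $u$. The inputs---the quantum product $\cdot_u$, the Poincar\'e pairing $\beta$, the grading operator $\mu=(\deg-d)/2$ read off from $H^\bullet(X)$, and $c_1(X)$---assemble into a semi-simple Frobenius algebra $A_u$ equipped with the distinguished Euler vector $\xi_u\in A_u$. First I would determine $E_u(z)$ by running the recursion of Proposition~\ref{eulrecursion} starting from $E_0=\Id$. At a point where the canonical coordinates of the Frobenius manifold (the eigenvalues of $(\xi_u\cdot_u)$) are pairwise distinct, $\mathrm{ad}(\xi_u\cdot_u)$ is invertible on the off-diagonal part of $\End(A_u)$ in the canonical basis and vanishes on the diagonal; this pins down $E_{k+1}$ off the diagonal from $E_k$, while the diagonal coefficients are fixed by the symplectic condition $E(z)^*E(-z)=\Id$ together with Euler invariance, by the Dubrovin--Givental uniqueness recalled in Section~\ref{frobreconst}. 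If $u$ happens to sit on the big diagonal, one first carries out this construction at a nearby generic semi-simple point---where the multiplication table is determined from the germ at $u$ by the Darboux--Egoroff system---and then transports $E$ back to $u$ along the ODE of Proposition~\ref{varyE}.

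Second, from $E_u$ and the algebra at $u$ I would extend both the Frobenius manifold and $E$ to a neighbourhood by integrating the compatible system of Proposition~\ref{Fwith*}.ii,
\[
\frac{\partial F}{\partial v} = -\frac{(v\cdot)}{z}\circ F, \qquad F(z) = E(z)\circ \Pi \circ \exp(-\mathbf{u}*/z),
\]
whose solution simultaneously evolves the canonical coordinates $\mathbf{u}$, the transition matrix $\Pi$ (hence the whole multiplication table), and the series $E$. Analytic continuation over the semi-simple locus yields a global Frobenius manifold and classifying element. Theorem~1 then assembles the CohFT classes $\ol{Z}{}^n_g = GW^n_{g,u}$ on every $\ol{M}{}^n_g$, and the divisor equation of Section~\ref{eulerfield} Fourier-expands $GW^n_{g,u}$ along the $H^2(X;\bC)$-directions to recover the individual Novikov components $GW^n_{g,\delta}$.

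The main obstacle is step one: one must verify that the residual diagonal ambiguity in each $E_{k+1}$ is genuinely killed by the symplectic plus Euler-invariance conditions, and, in the case where the given point lies on the big diagonal, that the Darboux--Egoroff extension reconstructs the \emph{genuine} GW Frobenius manifold rather than a spurious sibling. Both points are guaranteed by the rigidity theorems of Dubrovin and Givental cited in Section~\ref{frobreconst}, after which the procedure becomes algorithmic.
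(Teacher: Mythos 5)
Your overall strategy --- recognise Gromov--Witten theory as a semi-simple homogeneous CohFT with flat identity, invoke the classification to reduce everything to the Frobenius data plus the series $E$, and recover $E$ from the recursion of Proposition~\ref{eulrecursion} --- is exactly the paper's. Where you diverge is at the crux: how the diagonal entries of each $E_k$ (in the normalised canonical basis) get pinned down, and what to do when $(\xi\cdot_u)$ has repeated eigenvalues. You fix the diagonal by appealing to the symplectic condition together with the Dubrovin--Givental uniqueness of the Euler-invariant solution, and you handle the big-diagonal case by perturbing to a nearby generic semi-simple point and transporting back along the ODE of Proposition~\ref{varyE}. The paper's argument is purely pointwise and more elementary: the left-hand side of $[(\xi_0\cdot),E_{k+1}]=-(\mu+k)E_k$ is a commutator with a diagonal matrix, so its diagonal vanishes; since $\mu$ is skew and hence has zero diagonal entries, the vanishing of the diagonal of $(\mu+k)E_k$ expresses $(E_k)_{ii}$ for $k\ge 1$ directly in terms of the already-determined off-diagonal entries of $E_k$ --- no symplectic condition and no family-level uniqueness theorem are needed. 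For repeated eigenvalues the paper runs the same argument on the block decomposition along the eigenspaces of $(\xi\cdot_u)$ (the equation $[(\xi_0\cdot),E_1]=-\mu$ forces the diagonal blocks of $\mu$ to vanish, after which the recursion proceeds blockwise), keeping the reconstruction literally at the one given point. Your deferral to the Dubrovin--Givental theorem is not wrong, but it is a statement about solutions of the ODE system over a neighbourhood in the Frobenius manifold, so as written your step one quietly presupposes the germ extension you only carry out in step two; the paper's pointwise recursion sidesteps that circularity, and is what makes the claim ``at a \emph{single} semi-simple point'' literal. Your closing remarks about extending over the $H^2$ directions and Fourier-expanding via the divisor equation to isolate the Novikov components $GW^n_{g,\delta}$ are a reasonable filling-in of a step the paper leaves implicit.
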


\begin{proof}
Assume first that the quantum multiplication operator $(\xi\cdot)$ 
at our chosen semi-simple point has distinct eigenvalues. Working in 
the normal canonical basis, 
the second equation in Proposition~\ref{eulrecursion} supplies the 
off-diagonal entries of $E_k$, once $E_{k-1}$ is known. Next, since 
$(\xi\cdot)$ is a diagonal matrix, the diagonal entries of the commutator 
$[(\xi\cdot),  E_{k+1}] = (\mu+k)E_k$ must vanish; since those of 
the skew matrix $\mu$ vanish as well, this fact determines the 
diagonal part of $E_k$ from its off-diagonal part. Finally, $E_0=\Id$.

In the general case, consider the block-decompositions of $\mu$ and of 
the $E_k$ corresponding to the eigenspaces of $(\xi\cdot)$. The first 
equation $[(\xi\cdot),  E_1] = \mu$ implies the vanishing of the diagonal 
blocks of $\mu$. This is a constraint which \emph{must} hold if $A$ is 
semi-simple. Given that, the off-diagonal blocks of $E_1$ are 
determined from those of $\mu$. The diagonal blocks are determined from the 
vanishing of those of $(\mu+\Id)E_1$ --- which must equal $[(\xi\cdot), 
 E_2]$ --- and in this way, the recursive determination of the $E_k$ 
proceeds as before. 
\end{proof}

{\small\noindent
\textsc{Constantin Teleman}\\
UC Department of Mathematics, 970 Evans Hall, Berkeley, CA 94720\\
\texttt{teleman@math.berkeley.edu}}

\begin{thebibliography}{ABC}
\bibitem[A1]{ab} L.~Abrams: Two-dimensional topological quantum field 
	theories and Frobenius algebras. \emph{J.\ Knot Theory Ramifications} 
	\textbf{5}  (1996), 569--587
\bibitem[A2]{ab2}L.~Abrams: The quantum Euler class and the quantum 
	cohomology of the Grassmannians. \emph{Israel J.\ Math.} \textbf{117} 
	(2000), 335--352
\bibitem[B]{bayer} A.~Bayer: Semisimple quantum cohomology and blowups. 
	\emph{Int.\ Math.\ Res.\ Not.} \textbf{2004}, 2069--2083
\bibitem[BT]{botil}C.-F.~B\"odigheimer, U.~Tillmann: Stripping and 
	splitting decorated mapping class groups. \textit{Cohomological 
	methods in homotopy theory} (Bellaterra, 1998), 47--57, Progr.\ Math. 
	\textbf{196}, Birkh\"auser, Basel, 2001.
\bibitem[C]{cost} K.~Costello: Topological conformal field theories 
	and Calabi-Yau categories. \textit{Adv. Math.} \textbf{210}, 
	(2007),
\bibitem[Ci]{ciolli} G.~Ciolli: On the quantum cohomology of some Fano 
	threefolds and a conjecture of Dubrovin. \emph{Internat.\ J.\ Math.}
	\textbf{16} (2005), 823--839
\bibitem[CG]{tomsasha} T.~Coates, A.~Givental: Quantum cobordism and 
	formal group laws. In: \textit{The unity of mathematics}, 155--171, 
	\textit{Progr.\ Math.} \textbf{244}, Birkh\"auser, Boston, 2006
\bibitem[CKS]{cks} Y.~Chen, M.~Kontsevich, A.~Schwartz: Symmetries of 
	WDVV equations. \textit{Nuclear Phys.~B}  \textbf{730}  (2005), 
	352--363. 
\bibitem[D]{dub} B.\ Dubrovin: Geometry of 2D topological field theories. 
	In: Integrable systems and Quantum Groups (Montecatini Terme, 1993), 
	\textit{Lecture Notes in Math.} \textbf{1620}, Springer, Berlin, 1996, 
	120--348
\bibitem[FOOO]{fooo} K.\ Fukaya, Y.-G.\ Oh, H.\ Ohta, K.\ Ono: Lagrangian 
	Floer theory on compact toric manifolds I, II: Preprints, arxiv:0802.1703, 
	0810.5654 
\bibitem[Ge]{get} E.~Getzler: Batalin-Vilkovisky algebras and $2$-dimensional
	topological field theories. \textit{Commun.\ Math.\ Phys.} \textbf{159} 
	(1994), 265--285
\bibitem[G1]{giv}A.~Givental: Semi-simple Frobenius structures in higher 
	genus. \textit{Internat. Math.~Res.~Notices} \textbf{23} (2001), 
	1265--1286
\bibitem[G2]{giv2}A.~Givental: On the WDVV equation in quantum $K$-theory. 
	Dedicated to William Fulton on the occasion of his 60th birthday. 
	\textit{Michigan Math.~J.} \textbf{48} (2000), 295--304 
\bibitem[G3]{giv3}A.~Givental: Gromov-Witten invariants and quantization 
	of quadratic Hamiltonians. \textit{Mosc.~Math.~J.} \textbf{1} (2001), 
	551--568
\bibitem[H]{har} J.L.~Harer: Stability of the homology of the mapping 
	class groups of orientable surfaces. \emph{Ann.\ of Math.\ (2)} 
	\textbf{121} (1985), 215--249
\bibitem[HMT]{hmt}C.~Hertling, Yu.~Manin and C.~Teleman: \emph{An 
	update on semi-simple quantum cohomology and F-manifolds.} arxiv:0803.2769
\bibitem[I]{iv} N.V.~Ivanov: Stabilization of the homology of Teichm\"uller 
	modular groups. (Russian) \emph{Algebra i Analiz} \textbf{1} (1989), 
	110--126;  translation in \emph{Leningrad Math.\ J.} \textbf{1} (1990),
	675--691
\bibitem[KK] {kabkim}A.~Kabanov, T.~Kimura: A change of coordinates on the 
	large phase space of quantum cohomology. \textit{Comm.\ Math.\ Phys.} 
	\textbf{217} (2001), 107--126
\bibitem[KKP]{kkp}L.~Katzarkov, M.~Kontsevich, T.~Pantev: Hodge theoretic 
	aspects of Mirror Symmetry. In: From Hodge theory to integrability 
	and TQFT tt*-geometry, \emph{Proc.~Sympos.~Pure Math.} \textbf{78}, 
	Amer.~Math.~Soc., 2008, 87--174
\bibitem[KM1]{km}M.~Kontsevich, Yu.~Manin: Gromov-Witten classes, quantum 
	cohomology, and enumerative geometry. \textit{Comm.\ Math.\ Phys.} 
	\textbf{164} (1994), 525--562
\bibitem[KM2]{km2} M.~Kontsevich, Yu.~Manin: Relations between the 
	correlators of the topological sigma-model coupled to gravity.
	\textit{Commun.\ Math\ Phys.} \textbf{196} (1998), 385--398
\bibitem[L]{lo}E.~Looijenga: Stable cohomology of the mapping class group 
	with symplectic coefficients and of the universal Abel-Jacobi map. 
	\textit{J.~Algebraic Geom.} \textbf{5} (1996), 135--150
\bibitem[LP]{leepan} Y-P.~Lee, R.~Pandharipande: \emph{Frobenius 
	manifolds, Gromov--Witten theory and Virasoro constraints}. 
	Book in preparation.
\bibitem[M]{man} Yu.~Manin: \emph{Frobenius manifolds, quantum cohomology 
	and moduli spaces}. AMS, 1999
\bibitem[MT]{madtil} I.~Madsen, U.~Tillmann: The stable mapping class group 
	and $Q(\mathbb{CP}^\infty_+)$. \textit{Invent.~Math.} \textbf{145} 
	(2001), 509--544
\bibitem[MMS]{mms} M.~Markl, S.~Merkulov, S.~Shadrin: Wheeled PROPs, 
	graph complexes and the master equation. \emph{J.~Pure Appl.~Algebra} 
	\textbf{213} (2009), 496--535.
	\bibitem[MW]{madw} I.~Madsen, M.~Weiss: The stable mapping class group and 
	stable homotopy theory. \textit{European Congress 
	of Mathematics}, 283--307, Eur.~Math.~Soc., Z\"urich, 2005
\bibitem[MZ]{manzog} Yu.~Manin, P.~Zograf: Invertible cohomological field 
	theories and Weil-Petersson volumes. \textit{Ann.\ Inst.\ Fourier 
	(Grenoble)} \textbf{50} (2000), 519--535. 
\bibitem[S]{seg} G.B.\ Segal: \textit{Topological Field Theory.} Notes of 
	lectures at Stanford University (1998), \texttt
	{http://www.cgtp.duke.edu/ITP99/segal}
\bibitem[Su]{sul}D.~Sullivan: Lectures on String Topology (AIM 2005, 
	Morelia 2006). \textit{String Topology: Background and Present State}.
	arXiv:0710.4141 
\bibitem[Te]{ecm} C.~Teleman: Topological field theories in 2 dimensions. 
\emph{European Congress of Mathematics}, 197Ð210, Eur.\ Math.\ Soc., ZŸrich, 2010
\bibitem[Ti]{til} U.~Tillmann: On the homotopy of the stable mapping class 
	group, \emph{Invent.~Math.} \textbf{130} (1997), 257Ð275
\bibitem[W] {wit} E.~Witten: Topological Quantum Field theory. \textit
	{Commun.\ Math.\ Phys.} \textbf{117} (1988), 353--386
\end{thebibliography}
\end{document}